\def\Sha{\amalg \!\!\amalg}
 \def\unit{\Eins}
 \def\DDelta{\mathbbnew \Delta}
\def\numberbysection{\@addtoreset{equation}{section}
         \renewcommand{\theequation}{\thesection.\arabic{equation}}}
\def\subsubsection{\@startsection{subsubsection}{3}%
  \normalparindent{.5\linespacing\@plus.7\linespacing}{-.5em}%
  {\normalfont\bfseries}}
\newtheorem{thm}{Theorem}[section]
\newtheorem{lem}[thm]{Lemma}
\newtheorem{prop}[thm]{Proposition}
\newtheorem{cor}[thm]{Corollary}
\theoremstyle{definition}
\newtheorem{df}[thm]{Definition}
\newtheorem{rmk}[thm]{Remark}
\newtheorem{ex}[thm]{Example}
\newtheorem{example}[thm]{Example}
\newtheorem{as}[thm]{Assumption}
\def\C{\mathcal C}
\def\O{\mathcal{O}}
\def\Ab{\mathcal{A}b}
\def\Z{\mathbb{Z}}
\def\Q{\mathbb{Q}}
\def\Vect{\mathcal{V}ect}
\def\kVect{\Vect_k}
\def\dgVect{dg\mdash\kVect}
\def\cat{\mathcal{C}at}
\def\Set{\mathcal{S}et}
\def\H{\mathscr H}
\def\SSigma{$\Sigma$}
\newcommand{\EZDIAG}[5]{\xymatrix
@C+=2.5cm{*+[r]{#1}
\ar@(u,l)_(0.62){\displaystyle #5}[]
\ar@<1ex>^-{#3}[r]&\ar@<1ex>^-{#4}[l]#2}}
\def\Coop{\check\O}
\def\id{{\mathrm{id}}}
\def\hI{\hat I}
\def\HG{\H_G}
\def\PQ{\HG}
\def\HB{\H_B}
\def\SS{\mathbb{S}}
\def\odo{\otimes \dots\otimes}
\def\cdc{\cdot \ldots\cdot}
\def\kdk{,\dots,}
\def\eps{\epsilon}
\def\B{\mathscr{B}}
\def\Bred{\B^{red}}
\def\nn{\nonumber}
\def\I{\mathscr {I}}
\def\J{\mathscr {J}}
\def\C{\mathscr {C}}
\def\Gr{Gr}
\def\proj{pr}
\def\Rad{\mathcal R}
\def \colim {\mathop {\rm colim}}
\def\FF{\mathfrak F}
\def\N{\mathbb{N}}
\def\dottree{\bullet\hskip-4.5pt |}
\def\simpcat{{\Delta}}
\def\nn{\nonumber}
\def\vardel{\delta}
\def\eps{\epsilon}
\def\Vect{\mathcal{V}ect}
\def\CalC{{\mathcal C}}
\def\Set{{\mathcal S}et}
\def\operads{{\mathfrak O}}
\def\FF{\mathfrak F}
\def\C{\CalC}
\def\Z{{\mathbb Z}}
\def\N{{\mathbb N}}
\def\colim{\mathrm{colim}}
\def\Surj{\mathfrak{FS}}
\def\a{\alpha}
\def\O{{\mathcal O}}
\def\P{{\mathcal P}}
\def\SS{{\mathbb S}}
\def\odo{\otimes \cdots \otimes}
\def\wt{\mathrm{wt}}
\def\deg{\mathrm{deg}}
\def\t{\tau}
\def\con{con}
\def\Ab{{\mathcal Ab}}
\def\gAb{g\Ab}
\def\I{{\mathcal I}}
\def\D{\mathcal D}
\def\inthom{\underline{Hom}}
\def\ta{\twoheadrightarrow}
\def\length{l}
\def\weight{wt}
\def\Bred{\B^{red}}
\def\mdash{\text{-}}
\def\un{\underline{n}}
\def\uone{\underline{1}}
\begin{document}

\title[Hopf Algebras I: operadic \&  simplicial aspects]{Three Hopf algebras from number theory, physics \& topology, and their common background\\  I: operadic \&  simplicial aspects}

\author[I.\ G\'alvez--Carrillo]{Imma G\'alvez--Carrillo}
\email{m.immaculada.galvez@upc.edu}
\address{Departament de Matem\`atiques, Universitat Polit\`ecnica de Catalunya,
Edifici TR1-Palau d'Ind\'ustries, Carrer Colom, 1.
08222 Terrassa,
Spain.}

\author[R.~M.~Kaufmann]{Ralph M.\ Kaufmann}
\email{rkaufman@math.purdue.edu}

\address{Purdue University Department of Mathematics, West Lafayette, IN 47907
}

\author[A.~Tonks]{Andrew Tonks}
\email{apt12@le.ac.uk}
\address{Department of Mathematics,
University of Leicester,
University Road,
Leicester, LE1 7RH, UK.}

\def\emptyset{\varnothing}




\begin{abstract}
We consider three {\it a priori} totally different setups for Hopf algebras from number theory, mathematical physics and algebraic topology.
These are the Hopf algebra of Goncharov for multiple zeta values, that of Connes--Kreimer for renormalization, and a Hopf algebra constructed by Baues to study double loop spaces.
We show that these examples can be successively unified by considering simplicial objects,
co--operads with multiplication and Feynman categories at the ultimate level. These considerations open the door to new constructions and reinterpretations of known constructions in a large common framework, which is presented step--by--step with examples throughout. In this first part of two papers, we concentrate on the simplicial and operadic aspects.
\end{abstract}
\maketitle

\tableofcontents

\section*{Introduction}
Hopf algebras have long been known to be a highly effective tool for
classifying and methodologically understanding complicated structures. In this vein,
we start by recalling three Hopf algebra constructions, two of which are rather famous and lie at the center of their respective fields. These are Goncharov's Hopf algebra of multiple zeta values  \cite{Gont} whose variants lie at the heart of the recent work \cite{brown}, for example, and the ubiquitous Connes--Kreimer Hopf algebra of rooted forests \cite{CK}. The third Hopf algebra predates them but is not as well publicized: it is a Hopf algebra discovered and exploited by Baues \cite{Baues} to model double loop spaces.
We will trace the existence of the first and third of  these algebras back to a fact known to experts\footnote{As one expert put it: ``Yes this is well--known, but not to many people''.}, namely that simplices form an operad. It is via this simplicial bridge that we can push the understanding of the Hopf algebra of Goncharov to a deeper level and relate it to Baues' construction which comes from an {\it a priori} totally different setup.
Here, we prove a general theorem, that any
simplicial object gives rise to a bi--algebra.

The tree  Hopf algebra of Connes and Kreimer fits into this picture through a map given by contracting all the internal edges of the trees. This map also furnishes an example {\it par excellence} of the complications that arise in this story.
A simpler example is given by restricting to the sub-Hopf algebra of three-regular trees. In this case the contraction map exhibits the corresponding Hopf algebra as a pull-back of a simplicial object. This relationship is implicit in \cite{Gont} and is now put into a more general framework.
Another Hopf algebra that is closely related, but more complicated is the Connes--Kreimer Hopf algebra for renormalization defined on graphs.

We show that the essential key to obtain a Hopf structure in all
these examples is the realization that the Hopf algebras are quotients of bi--algebras and that these bi--algebras have a natural origin coming from Feynman categories.
This explains the ``raison d'\^etre'' of the co--product formulas as simply being the dual to the partial product given by the composition in Feynman categories, which are special monoidal categories. The quotient is furthermore identified as the natural quotient making the bi--algebras connected.

In the first three examples, there is an intermediate explanation in terms of operad theory. These correspond to particularly simple Feynman categories. In this part of the paper, we will discuss this intermediate stage and relay the general categorical treatment to the second part \cite{HopfPart2}.

In this first part, we start with (co)--operads and
co--operads with multiplication, a new notion that we introduce.\footnote{For the experts, we wish to point out that due to different gradings (in the operad degree) this is neither what is known as  a Hopf operad nor its dual.}  We prove a general theorem which states that a co--operad with multiplication always yields a bi--algebra. In the most natural construction, one starts with a unital operad, then dualizes it to obtain a co--unital co--operad. For this co--operad, one regards the free algebra it generates, and this algebra yields a co--operad with multiplication.  This is an instance of a so--called non--connected construction first discussed in \cite{KWZ} that is natural from the point of view of Feynman categories. Starting  at the second step, that is with a co--unital co--operad and taking the free algebra will be called the free construction.
There is also an intermediate quotient, which can be seen as a $q$--deformation. As $q\to 1$, we obtain the Hopf algebra.

In the general setting of co--operads with multiplication, these bi--al\-ge\-bras are neither unital nor co--unital. While there is no problem adjoining a unit, the co--unit is a subtle issue in general, and we discuss the conditions for its existence in detail. We show that the conditions are met in the special cases at hand, as they stem from the dual of unital operads.
A feature of the more general case is that there is a natural ``depth'' filtration. We furthermore elucidate the relation of the general case to the free case  by  proving that there is always a surjection from a free construction to the associated graded.  Going further, we prove the following structural theorem: if the bi--algebra has a left co--algebra co--unit, then  it is a deformation of its associated graded and moreover this associated graded is a quotient of the free construction of its first graded piece.
These deformations are of interest in themselves.

Another nice result comes about by noticing that just as there are operads and pseudo-operads, there are co--operads and pseudo--co--operads. We show that these dual structures lead to bi--algebras and a version of infinitesimal bi--algebras. The operations corresponding to the dual of the partial compositions of pseudo-operads are then dual to the infinitesimal action of Brown \cite{brownann}. In other words, they give the infinitesimal Lie-co--algebra structure dual to the pre-Lie structure.

Moving from the constructed bi--algebras to Hopf algebras is possible under the extra condition of almost connectedness. If the co--operad  satisfies this condition, which technically encompasses the existence of a split bi--algebraic co--unit, then there is a natural quotient of the bi--algebra which is connected and hence Hopf.
In the three examples taking this quotient is  implemented in the original constructions by assigning values to degenerate expressions.

A further level of complexity is reflected in the fact that there are several variations of the construction of the Connes--Kreimer Hopf algebra based for example on planar labeled trees, labeled trees, unlabeled trees and trees whose external legs have been ``amputated'' --- a term common in physics.
We show, in general, these variations correspond respectively to non-Sigma co--operads, coinvariants of symmetric co--operads and
certain colimits, which are possible in semi--simplicial co--operads.

An additional degree of understanding is provided by the insight that the underlying co--operads for the Hopf algebras of Goncharov and of Baues are  given by a simplicial
structure. This also allows us to understand the origin of the shuffle product and other relations commonly imposed in the theory of multiple zeta values and motives from this angle.
For the shuffle product, in the end it is as Broadhurst remarked; the product comes from the fact that we want to multiply the integrals, which are the amplitudes of connected components of disconnected graphs. In simplicial terms this translates to the compatibility of  different naturally occurring free monoid constructions, in the form of the Alexander--Whitney map and a multiplication based on  the relative cup product.
There are more surprising direct correspondences between the extra relations, like the contractibility of a 2-skeleton used by Baues and a relation on multiple zeta values essential for the motivic co--action.

These classes of examples bring us to the ultimate level of abstraction and source of Hopf algebras of this type: the Feynman categories of \cite{feynman}, which we treat in the second part \cite{HopfPart2}. These constructions are more general in the sense that there are other Feynman categories besides those which yield  co-operads with multiplication. One of the most interesting examples going deeper into mathematical physics is the Feynman category whose morphisms are graphs. This allows us to obtain the graph Hopf algebras of Connes and Kreimer mentioned above.  Going further, there are also the Hopf algebras  corresponding to cyclic operads, modular operads, and  new examples based on 1-PI graphs and motic graphs, which yield the new Hopf algebras of Brown \cite{brown}. Here several general constructions on Feynman categories, such as enrichment, decoration, universal operations, and free construction, come into play and give interrelations between the examples.

This first part of the paper is organized as follows: We proceed in steps.
 To be self-contained, we write out the relevant definitions at work in the background at each step. We also start each step with a short overview of the following constructions and their goals.

 We begin by recalling the three Hopf algebras and their variations in \S\ref{hopfpar}. We give all the necessary details and add a discussion after each example indicating its position within the whole theory.

In \S\ref{operadpar}, we consider the non--connected or free case, in which the co--operads have free multiplication.
In order to make the technical details and the build--up of complexity more transparent, we start with a road--map in \S\ref{roadmappar} that runs through the different Connes--Kreimer constructions on trees.
The main results for the non--symmetric case are
Theorem \ref{bialgthm} and Theorem \ref{hopfthm}. These explain the examples of Goncharov, Baues and the planar version of Connes--Kreimer trees.
The infinitesimal structure and the $q$ deformation are summarized in Theorem \ref{indecthm}. As an example, we reconstruct Brown's derivations.
The results for the technically more demanding symmetric version are Theorems \ref{symbialgthm}, \ref{symhopfthm} and \ref{symcopreliethm}. We then proceed to examine the ``amputated case'' in \S\ref{freeampsec} resulting in
Theorem \ref{freeampthm}. We end the paragraph with a discussion of co--actions \S\ref{coactionpar}.

In \S\ref{gencooppar}, we
 give the  definition of a co--operad with multiplication and the constructions of bi--algebras and Hopf algebras.  This paragraph also contains a discussion of the filtered and graded cases. This setup is strictly more general than the three examples, which all have a free multiplication.
 The result for the bi--algebra structure is Theorem \ref{genbialgthm}.
 The discussion of units and co--units is intricate and summarized in \S\ref{unitsummarypar}. The results about bi--algebra deformations are to be found in Theorem \ref{defthm}. The results on Hopf, infinitesimal structure and
deformations all transfer to this more general setting under the assumption of having a bi--algebra unit.

 Given that the origin of the co--operad structure for Goncharov's and Baues' Hopf algebras is simplicial, we develop the general theory for the simplicial setting in \S\ref{simplicialpar}. We give a particularly clean construction for the bi--algebras starting from the observation that simplices form an operad, yielding Proposition \ref{bialgfromsimp}. We then discuss the examples from Baues and Goncharov in this setting. Further results pertain to the cubical structure \S\ref{cubicalpar} and to a co--lax monoidal structure given by simplicial strings \S\ref{stringsec}. Both these observations have further ramifications which will be explored further in the future.

We include a short appendix on co-- and Hopf algebras and an appendix
for the definition of Joyal duality. More about Joyal duality and its applications can be found in \cite[\S\ref{P2-constexpar}]{HopfPart2}.

\subsection*{Acknowledgments}
We would like to thank  D.~Kreimer, F.~Brown, P.~Lochack,  Yu.~I.~Manin, H.~Gangl, M.~Kapranov, JDS Jones, P.~Cartier and A.~Joyal for enlightening discussions.

RK gratefully acknowledges support from  the Humboldt Foundation and the Simons Foundation, the
Institut des Hautes Etudes Scientifiques and the Max--Planck--Institut
for Mathematics in Bonn and the University of Barcelona for their support. RK also thanks the Institute for Advanced Study in Princeton and the Humboldt University in Berlin for their hospitality and
 their support.

IGC was partially supported by Spanish Ministry of Science and Catalan government grants
MTM2012-38122-C03-01, MTM2013-42178-P, 2014-SGR-634, MTM2015-69135-P, MTM2016-76453-C2-2-P (AEI/ FEDER, UE), MTM2017-90897-REDT, and 2017-SGR-932,
and AT by
MTM2013-42178-P, and MTM2016-76453-C2-2-P (AEI/FEDER, UE)
all of which are gratefully acknowledged.

We also thankfully acknowledge the Newton Institute where the idea for this work was conceived during the activity on ``Grothendieck-Teichm\"uler Groups, Deformation and Operads''.
Last but not least, we thank the  Max--Planck--Institut
for Mathematics for the activity on ``Higher structures in Geometry and Physics'' which allowed our collaboration to put  major parts of the work  into their current form.

\subsection*{Notation}
As usual for a set $X$ with an action of a group $G$, we will denote the invariants by $X^G=\{x|g(x)=x\}$
and the  coinvariants by $X_G=X/\sim$ where $x\sim y$ if and only if there exists a $g\in G:g(x)=y$.

For an object $V$ in a monoidal category, we denote by $TV$ the free unital algebra on $V$, that is $TV=\bigoplus_n V^{\otimes n}$, in the case of an Abelian monoidal category,  and by $\bar TV$ the free non--unital algebra on $V$, that is reduced the tensor algebra on $\bar TV=\bigoplus_{n\geq 1}V^{\otimes n}$  in the case of an Abelian monoidal category. Similarly $SV=\bigoplus_{n\geq 0}V^{\odot n}$ denotes the free symmetric algebra and $\bar SV$ the free non--unital symmetric algebra. We use the notation $\odot$ for the symmetric aka.\ symmetrized, aka.\ commutative tensor product: $V^{\odot ^n}=(V^{\otimes n})_{\SS_n}$ where $\SS_n$ permutes the tensor factors.

Furthermore, we use $\underline{n}=\{1,\dots,n\}$ and  denote by $[n]$  the category with $n+1$ objects $\{0,\dots, n\}$ and morphisms generated by the chain $0\to 1 \to \dots \to n$.

\section{Preface: Three Hopf algebras}
\label{hopfpar}
In this section, we will review the construction of the main Hopf algebras which we wish to put under one roof and generalize. After each example we will give a discussion paying special attention to their unique features.
\subsection{Multiple zeta values} We briefly recall the setup of Goncharov's Hopf algebra of multiple zeta values. Given $r$ natural numbers $n_1,\dots, $ $n_{r-1}\geq 1$ and $n_r\geq 2$, one considers the real numbers
\begin{equation}
\zeta(n_1,\dots, n_r):=
\sum_{1\leq k_1 \leq \dots \leq k_r}\frac{1}{k_1^{n_1}\dots k_r^{n_r}}
\end{equation}
The value $\zeta(2)=\pi^2/6$, for example, was calculated by Euler.

Kontsevich remarked that there is an integral representation for these, given as follows.
If $\omega_0:=\frac{dz}{z}$ and $\omega_1:=\frac{dz}{1-z}$ then
\begin{equation}
\label{zetaeq}
\zeta(n_1,\dots, n_r)=\int_0^1 \omega_1\underbrace{\omega_0\dots \omega_0}_{n_1-1}\omega_1\underbrace{\omega_0\dots \omega_0}_{n_2-1}\dots \omega_1\underbrace{\omega_0\dots \omega_0}_{n_r-1}
\end{equation}
Here the integral is an iterated integral and the result is a real number.
The {\it weight} of (\ref{zetaeq}) is $N=\sum_1^r n_i$ and its {\it depth} is $r$.

\begin{ex} As it was already known by Leibniz,
\begin{equation}
\zeta(2)=\int_0^1\omega_1\omega_0
=\int_{0\leq t_1\leq t_2\leq 1}\frac{dt_1}{1-t_1}\frac{dt_2}{t_2}
\end{equation}
\end{ex}

One of the main interests is the independence over $\Q$ of these numbers:
some relations between the values come directly from their representation as iterated integrals, see e.g.\ \cite{browndecomp} for a nice summary.
As we will show in Chapter \ref{simplicialpar}, many of these formulas can be understood from the fact that simplices form an operad and hence simplicial objects form a co--operad.

\subsubsection{Formal symbols}
Following Goncharov, one turns the iterated integrals into formal symbols $\hI(a_0;a_1,\dots,a_{n-1};a_n)$ where the $a_i\in \{0,1\}$.
That is, if $w$ is an arbitrary word in $\{0,1\}$ then $\hI(0;w;1)$ represents the iterated integral from $0$ to $1$ over the product of forms according to $w$, so that
\begin{equation}\hI(0;1,\underbrace{0,\dots,0}_{n_1-1},1,\underbrace{0,\dots, 0}_{n_2-1}{},\dots,1,\underbrace{0\dots0}_{n_r-1}{};1)\end{equation}
is the formal counterpart of \eqref{zetaeq}. The weight is now the length of the word $w$ and the depth is the number of $1$s.
Note that the integrals \eqref{zetaeq} converge only for $n_r\geq2$, but may be extended to arbitrary words using a regularization described e.g. in \cite[Lemma 2.2]{browndecomp}.

\subsubsection{Goncharov's first Hopf algebra}
Taking a more abstract viewpoint, let $\PQ$ be the
polynomial algebra on the formal symbols $\hI(a;w;b)$ for elements $a,b$ and any nonempty word $w$  in the set $\{0,1\}$, and let
\begin{equation}
\label{emptycondeq}
\hI(a;\varnothing;b)=\hI(a;b)=1
\end{equation}

On $\PQ$ define a comultiplication $\Delta$ whose value  on a polynomial generator
is
\begin{multline}
\label{gontcoprod}
\Delta(\hI(a_0;a_1,\dots,a_{n-1};a_n))=
\!\!\!\!\!\!\!\!\!
\sum_{\substack{k\geq 0\\0= i_0 <i_1< \dots < i_k=n}}
\!\!\!\!\!\!\!\!\!\!\!\!
\hI{(a_{i_0};a_{i_1},\dots;a_{i_k})}\\
\otimes
\hI{(a_{i_0};a_{i_0+1},\dots;a_{i_1})}\hI{(a_{i_1};a_{i_1+1},\dots;a_{i_2})}\cdots
\hI{(a_{i_{k-1}};a_{i_{k-1}+1},\dots;a_{i_k})}
\end{multline}
\begin{thm}\cite{Gont}\label{gone}
If we assign $\hI(a_0;a_1,\dots,a_{m};a_{m+1})$ degree $m$ then $\PQ$ with the co--product \eqref{gontcoprod} (and the unique antipode) is a connected graded Hopf algebra.
\end{thm}
\begin{rmk}
The fact that it is unital and connected follows from \eqref{emptycondeq}.
\end{rmk}
\begin{rmk}
The letters $\{0,1\}$ are actually only pertinent insofar as to get multiple zeta values at the end; the algebraic constructions work with any finite set of letters $S$. For instance, if $S$ are complex numbers, one obtains polylogarithms.

\end{rmk}
\subsubsection{Goncharov's second Hopf algebra and the version of Brown}
There are several other conditions one can impose, which are natural from the point of view of iterated integrals or multiple zeta values, by taking quotients. They are
\begin{enumerate}
\item The shuffle formula
\begin{multline}
 \label{shufflecond}
 \hI(a;a_1,\dots,a_{m};b)\hI(a;a_{m+1},\dots,a_{m+n};b)\\=\sum_{\sigma \in \Sha_{m,n}}\hI(a;a_{\sigma(1)},\dots,a_{\sigma(m+n)};b)
    \end{multline}
where $\Sha_{m,n}$ is the set of $(m,n)$-shuffles.

\item
The path composition formula
  \begin{multline}
  \label{pathcompcond}
\forall x\in \{0,1\}: \hI(a_0;a_1,\dots,a_{m};a_{m+1})\\=
\sum_{k=1}^m
\hI(a_0;a_1,\dots,a_{k};x)\hI(x;a_{k+1},\dots,a_{m};a_{m+1})
       \end{multline}
\item The triviality of loops
  \begin{equation}
  \label{trivcond}
\hI(a;w;a)=0
\end{equation}
\item
The inversion formula
\begin{equation}
\label{dualcond}
 \hI(a_0;a_1,\dots, a_{n};a_{n+1})=(-1)^{n}\hI(a_{n+1},a_n,\dots,a_1;a_0)
\end{equation}
\item The exchange formula
\begin{equation}
\label{exchangeformula}
     \hI(a_0;a_1,\dots, a_{n};a_{n+1})=\hI(1-a_{n+1};1-a_n,\dots, 1-a_{1};1-a_{0})
\end{equation}
Here the map $a_i\mapsto 1-a_i$ interchanges $0$ and $1$.
\item 2--skeleton equation
\begin{equation}
\label{2skeq}
    \hI(a_0;a_1;a_2)=0
\end{equation}

\end{enumerate}

\begin{df}
$\tilde\PQ$ be the quotient of $\PQ$ with respect to the  homogeneous relations stemming from conditions
(1),(2),(3) and (4), let $\HB$ be the quotient of $\PQ$ with respect to relations of the conditions
(1), (3), (4) and let $\tilde\HB$ be the quotient by the relations given in the conditions (1),(2),(4),(5) and (6).
\end{df}
Again one can generalize to a finite set $S$.

\begin{thm}\cite{Gont,brownann,browndecomp}
$\Delta$ and the grading descend to $\tilde \PQ$ and using the unique antipode is a  graded connected  Hopf algebra. Furthermore
 (1), (2), (3) imply (4). $\HB$ and $\tilde\HB$ are graded connected Hopf algebras as well.
\end{thm}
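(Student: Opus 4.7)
The plan is to verify three things: that the weight grading descends to each quotient (an easy homogeneity check), that the coproduct $\Delta$ descends (each relation generates a coideal), and that the extra relation (4) follows from (1), (2), (3). Once those are in place, each quotient is a graded bialgebra and is connected by \eqref{emptycondeq}, so the standard formula for the convolution inverse of the identity supplies a unique antipode.

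First I would confirm that every relation (1)--(6) is homogeneous in the weight grading $\deg\hI(a_0;\dots;a_{m+1})=m$. Relation (1) equates two expressions of total degree $m+n$; in (2) the summands on the right have degrees $k$ and $m-k$ which multiply to degree $m$; (3)--(6) are manifestly homogeneous. This gives graded algebra quotients whose degree-zero component is spanned by $1=\hI(a;\varnothing;b)$, hence connectedness.

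The main content is the coideal check. Since $\Delta$ is an algebra map, it suffices to verify $\Delta(r)\in J\otimes\PQ+\PQ\otimes J$ for each generator $r$ of the ideal $J$. For the shuffle relation (1), I would expand the product $\Delta(\hI(a;w_1;b))\cdot\Delta(\hI(a;w_2;b))$, which is indexed by pairs of strictly increasing subsequences $0=i_0<\dots<i_p=m$ and $0=j_0<\dots<j_q=n$, and compare it with $\Delta$ applied to the sum of shuffles of $w_1$ and $w_2$. The coideal property reduces to a combinatorial identity: reading off the chosen indices on the left-hand tensor factor recovers all strictly increasing subsequences of shuffled words, while the right-hand tensor factor on each side becomes, modulo (1), a product of shuffles of the resulting subwords. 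For path composition (2) the check splits each coproduct sum at an interior cut; the result matches the split at $x$ applied to each tensor factor, modulo a further (2)-type expression. Relations (3)--(6) lead to simpler coideal checks, chiefly because any subword of a generator with the relevant symmetry inherits that symmetry, and hence lies in $J$ whenever the generator does.

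The implication $(1)\wedge(2)\wedge(3)\Rightarrow(4)$ I would deduce by the argument of \cite{browndecomp}: apply the shuffle relation to suitable pairs of symbols, use path composition at an intermediate point chosen so that the resulting loop symbols vanish by (3), and induct on the weight $n$. Finally, the existence of a unique antipode on a graded connected bialgebra is classical, so $\tilde\PQ$, $\HB$, and $\tilde\HB$ all become graded connected Hopf algebras once the coideal checks go through. I expect the principal obstacle to be the shuffle coideal verification: the bijection between iterated shuffles and pairs of cuts, while classical, is intricate, and it is here that the underlying cooperadic/cosimplicial structure developed in \S3 becomes conceptually clarifying. For $\HB$ and $\tilde\HB$ the same plan applies verbatim, one just re-runs the coideal check on the appropriate subset of (1)--(6) together with routine verification for (5) and (6).
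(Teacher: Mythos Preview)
The paper does not supply a proof of this theorem: it is stated with attribution to \cite{Gont,brownann,browndecomp}, and the text that follows is a discussion (\S1.1.4), not an argument. So there is no ``paper's own proof'' to compare against in the usual sense. Your outline---homogeneity of the relations, coideal checks for each generator, connectedness via \eqref{emptycondeq}, then the standard antipode formula for graded connected bialgebras---is precisely the direct route taken in the cited references, and is correct in structure.

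What the present paper does instead is provide a conceptual explanation in \S\S2--3: the bialgebra $\PQ$ itself arises from the general machine of Theorem~\ref{bialgthm} and Proposition~\ref{bialgfromsimp} applied to the simplicial set $X(\{0,1\})$, and the shuffle relation (1) is interpreted via the oplax monoidal structure of \S\ref{stringsec}. You already acknowledge this when you say the cooperadic/cosimplicial structure ``becomes conceptually clarifying'' for the shuffle coideal check. But note that the paper does \emph{not} re-derive the descent of $\Delta$ to the quotients $\tilde\PQ$, $\HB$, $\tilde\HB$ from its general framework; that is taken from the literature, and your direct verification is the only route on offer.

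Two places in your sketch are still only gestures. The shuffle coideal check you correctly flag as the hard part; the bijection you describe is the right one, but carrying it out requires care with the boundary terms (the endpoints $a,b$ appearing in every factor). For the implication $(1)\wedge(2)\wedge(3)\Rightarrow(4)$ you simply defer to \cite{browndecomp}; since the theorem itself is a citation this is acceptable, but if you want an actual argument the standard one applies (2) at the endpoint to the vanishing loop $\hI(a_0;a_1,\dots,a_n;a_0)=0$ from (3), then inducts on $n$ using the shuffle relation to isolate the top term.
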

\subsubsection{Discussion} In the theory of multiple zeta values it is essential that there are two parts to the story. The first is the motivic level. This is represented by the Hopf algebras and co--modules over them. The second are the actual real numbers that are obtained  through the iterated integrals. The theory is then an interplay between these two worlds, where one tries to get as much information as possible from the motivic level. This also explains the appearance of the different Hopf algebras since the evaluation in terms of iterated integrals factors through these quotients. In our setting,
we will be able to explain many of the conditions naturally. The first condition \eqref{emptycondeq} turns a naturally occurring non-connected bi--algebra into a connected  bi--algebra and hence a Hopf algebra. The existence of the bi--algebra itself follows from a more general construction stemming from co--operad structure with multiplication. One example of this is given by simplicial objects and the particular co--product \eqref{gontcoprod} is of this simplicial type.  This way, we obtain the generalization of $\PQ$.
Condition
\eqref{emptycondeq}
is understood in the simplicial setup in Chapter \ref{simplicialpar} as the contraction of a 1-skeleton of a simplicial object.
The relation
(2)
is actually related to a second algebra structure, the so-called path algebra structure \cite{Gont}, which we will discuss in the future.
The relation
(3)
is a normalization, which is natural from iterated integrals.
The condition (1) is natural within
the simplicial setup, coming from the Eilenberg--Zilber and Alexander--Whitney maps and interplay between two naturally occurring monoids.  That is we obtain a generalization of $\HB$  used in the work of Brown \cite{brownICM,brownann}.

The Hopf algebra $\tilde\HB$ is used in \cite{browndecomp}.
The relation (5), in the simplicial case, can be understood in terms of orientations.
Finally, the equation (6) corresponds to contracting the 2-skeleton of a simplicial object.
It is intriguing that on one hand (6) is essential for the coaction \cite{brownprivate} while it is essential in a totally different context to get a model for chains on a double loop space \cite{BauesHopf}, see below.

Moreover, in his proofs, Brown essentially uses operators $D_r$ which we show to be equal to the dual of the $\circ_i$ map used in the definition of a pseudo-co--operad, see \S\ref{pseudocooperadpar}.
There is a particular normalization issue with respect to $\zeta(2)$ which is handled in \cite{brownICM} by regarding the Hopf co--module $\HB\otimes_{\Q}\Q(\zeta^{\mathfrak{m}}(2))$ of $\HB$. The quotient by the second factor then yields the Hopf algebra above, in which the element representing $\zeta(2)$ vanishes. Natural co--actions are discussed in \S\ref{coactionpar}.

\subsection{Connes--Kreimer}
\subsubsection{Rooted forests without tails} We will consider graphs to be given by vertices, flags or half-edges and their incidence conditions; see Appendix A for details. There are two ways to treat graphs: either with or without tails, that is, free half-edges. In this section, we will recapitulate the original construction of Connes and Kreimer and hence use graphs without tails.

A tree is a contractible graph and a forest is a graph all of whose components are trees.
A rooted tree is a tree with a marked vertex. A rooted forest is a forest with one root per tree.
 A rooted subtree of a rooted tree is a subtree which shares the same root.

\subsubsection{Connes--Kreimer's  Hopf algebra of rooted forests}
We now fix that we are talking about isomorphism classes of trees and forests. In particular, the trees in a forest will have no particular order.

Let $\H_{CK}$ be the free {\em commutative} algebra, that is, the polynomial algebra, on rooted trees, over a fixed ground commutative ground ring $k$.
A forest is thus a monomial in trees
and the empty forest $\varnothing$, which is equal to ``the empty rooted tree'', is the unit $1_k$ in $k$.
We denote the commutative multiplication by juxtaposition and the algebra is graded by the number of vertices.

Given a rooted subtree $\tau_0$ of a rooted tree $\tau$, we define $\tau\setminus \tau_0$ to be the forest obtained by deleting all of the vertices of $\tau_0$ and all of the edges incident to vertices of $\tau_0$ from $\tau$: it is a rooted forest given by a collection of trees whose root is declared to be the unique vertex that has an edge in $\tau$ connecting it to $\tau_0$.

One also says that $\tau\setminus \tau_0$ is given by an admissible cut \cite{CK}.

Define the co--product on rooted trees as:
\begin{equation}
\label{ckintroeq}
\Delta(\tau)\;:=\;\tau\otimes 1_k\;+\;1_k\otimes \tau\;+\sum_{\substack{\tau_0\text{ rooted subtree of }\tau\\\tau_0\neq\tau}} \tau_0\otimes \tau\setminus \tau_0
\end{equation}
and extend it multiplicatively to forests,  $\Delta(\tau_1\tau_2)=\tau_1^{(1)}\tau_2^{(1)}\otimes\tau_1^{(2)}\tau_2^{(2)}$ in Sweedler notation.
One may include the first two terms in the sum by considering also $\tau_0=\tau$ and $\tau_0=\varnothing=1_k$ (the empty sub--forest of $\tau$), respectively, by declaring the empty forest to be a valid rooted sub--tree.
In case $\tau_0$ is empty $\tau\setminus \tau_0=\tau$ and in case $\tau_0=\tau$: $\tau\setminus \tau_0=\varnothing=1_k$.

\begin{thm}\cite{CK}
The comultiplication above together with the grading define a  structure of connected graded Hopf algebra.
\end{thm}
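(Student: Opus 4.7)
The plan is to verify the bialgebra axioms (associativity and unit of the product are automatic since $\H_{CK}$ is a polynomial algebra, and $\Delta$ is by decree a morphism of algebras), then deduce the Hopf property from graded connectedness. The main content is coassociativity and the choice of counit.

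For coassociativity, the idea is to establish a bijection between the indexing sets of the two iterated coproducts. Expanding on a generator $\tau$,
\begin{equation*}
(\Delta\otimes \id)\Delta(\tau) = \sum_{\tau_0\subseteq\tau}\sum_{\tau_{00}\subseteq\tau_0}\, \tau_{00}\otimes(\tau_0\setminus\tau_{00})\otimes(\tau\setminus\tau_0),
\end{equation*}
where $\tau_0$ runs over rooted subtrees of $\tau$ (together with $\emptyset$) and similarly for $\tau_{00}\subseteq\tau_0$. On the other side,
\begin{equation*}
(\id\otimes\Delta)\Delta(\tau) = \sum_{\tau'_0\subseteq\tau}\sum_{F\subseteq\tau\setminus\tau'_0}\, \tau'_0\otimes F\otimes\bigl((\tau\setminus\tau'_0)\setminus F\bigr),
\end{equation*}
where now $F$ is a \emph{rooted subforest} of the forest $\tau\setminus\tau'_0$, i.e.\ a choice of rooted subtree (possibly empty) in each component. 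I would then exhibit the bijection $(\tau_{00}\subseteq\tau_0\subseteq\tau)\longleftrightarrow(\tau'_0\subseteq\tau,\,F\subseteq\tau\setminus\tau'_0)$ given by $\tau'_0:=\tau_{00}$ and $F:=\tau_0\setminus\tau_{00}$. The verification is that $\tau_0\setminus\tau_{00}$ is indeed a rooted subforest of $\tau\setminus\tau_{00}$---its components are rooted at the vertices of $\tau_0$ adjacent to $\tau_{00}$, which are among the root vertices of the components of $\tau\setminus\tau_{00}$---and conversely that $\tau_{00}\cup F$ is a rooted subtree of $\tau$, since any component of $F$ is joined to $\tau_{00}$ through its root. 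Under this bijection the three tensor factors agree, establishing coassociativity on generators; the extension to forests is then automatic from multiplicativity of $\Delta$.

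For the counit I would set $\epsilon(1_k)=1$ and $\epsilon(\tau)=0$ whenever $\tau$ is a non-empty forest, and extend multiplicatively; the axiom $(\epsilon\otimes\id)\Delta=\id=(\id\otimes\epsilon)\Delta$ is then immediate on generators, as only the two primitive terms $1\otimes\tau$ and $\tau\otimes 1$ survive. The grading by number of vertices is preserved by $\Delta$ because $|\tau_0|+|\tau\setminus\tau_0|=|\tau|$ and by the multiplication by definition; the degree-zero part is $k\cdot 1_k$, so $\H_{CK}$ is connected. A graded connected bialgebra always admits a unique antipode defined recursively by $S(x)=-x-\sum S(x^{(1)})x^{(2)}$ on elements of positive degree (see Appendix B), so the Hopf algebra structure follows.

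The only genuinely non-formal step is the combinatorial bijection underlying coassociativity, and this is the step I would write out with most care; everything else is bookkeeping or an appeal to the general fact that graded connected bialgebras are Hopf.
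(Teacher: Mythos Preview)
Your argument is correct and is essentially the standard direct combinatorial proof of the Connes--Kreimer theorem. Note, however, that the paper does not actually prove this statement: it is recalled from \cite{CK} without proof, with only the remark that the antipode is unique because the bialgebra is graded and connected.

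What the paper \emph{does} do, later, is rederive $\H_{CK}$ as a special case of its general machinery: in \S\ref{cooperadpar} it shows that any counital cooperad yields, via the free construction $\Coop^{nc}$, a cooperad with multiplication and hence a bialgebra (Theorem~\ref{bialgthm}), and that under an ``almost connected'' hypothesis the appropriate quotient is Hopf; in \S\ref{discussionpar} it explains how the operad of leaf-labelled rooted trees, after dualising, passing to coinvariants, and amputating tails (\S\ref{ampsec}), recovers $\H_{CK}$ on the nose. In that derivation, coassociativity is not argued by an explicit bijection of cut configurations as you do, but follows formally from cooperad coassociativity \eqref{coassopeq} and compatibility with the free multiplication \eqref{compateq}. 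Your approach is more hands-on and self-contained; the paper's approach buys uniformity across many examples (Goncharov, Baues, graph Hopf algebras, Brown's motic Hopf algebra) at the cost of setting up the abstract framework first.
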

Note that, since the Hopf algebra is graded and connected, an antipode exists.
\subsubsection{Other variants}
There is a planar variant, using planar planted trees.
Another variant which is important for us is the one using trees with tails.
This is discussed in \S\ref{freecooppar} and \cite[\S\ref{P2-summarypar}]{HopfPart2} and  Appendix \ref{Quilapp}.
There is also a variant where one uses leaf labeled trees. For this it is easier not to pass to isomorphism classes of trees and just keep the names of all the half edges during the cutting. These will be introduced in the text, see also \cite{foissyCR1,foissyCR2}.

Finally there are algebras based on graphs rather than trees, which are possibly super-graded commutative by the number of edges. In this generality, we will need Feynman categories to explain the naturality of the constructions. Different variants of interest to physics and number theory are  discussed in \cite[\S\ref{P2-graphexpar}]{HopfPart2}.

\subsubsection{Discussion}
This Hopf algebra, although similar, is more complicated than the example of Goncharov. This is basically due to three features which we would like to discuss. First, we are dealing with isomorphism classes, secondly, in the original version, there are no tails and lastly there is a sub-Hopf algebra of linear trees. Indeed the most natural bi--algebra that will occur will be on planar forests with tails. To make this bi--algebra into a connected Hopf algebra, one again has to take a quotient analogous to the  normalization \eqref{emptycondeq}, implemented by the identification of the forests with no vertices (just tails) with the unit in $k$. To obtain the commutative, unlabeled case, one has to pass to coinvariants. Finally,
if one wants to get rid of tails, one has to be able to `amputate' them. This is an extra structure, which in the case of labeled trees is simply given by forgetting a tail together with its label. Taking a second colimit with respect to this forgetting construction yields the original Hopf algebra of Connes and Kreimer. The final complication is given by the Hopf subalgebra of forests of linear, i.e.\ trees with only binary vertices. This Hopf subalgebra is again graded and connected. In the more general setting, the connectedness will be an extra check that has to be performed. It is related to the fact that for an operad $\O$, $\O(1)$ is an algebra and dually for a co--operad $\check{\O}$, $\check\O(1)$ is a co--algebra, as we will explain. If $\O$ or $\check\O$ is not reduced (i.e.\ one dimensional generated by a unit, if we are over $k$), then this extra complication may arise and in general leads to an extra connectedness condition.

\subsection{Baues' Hopf algebra for double loop spaces} The basic starting point for Baues \cite{Baues}
is a simplicial set $X$, from which one  passes to the chain complex
$C_*(X)$. It is well known that $C_*(X)$ is a co--algebra under the diagonal approximation chain map $\Delta:C_*(X)\to C_*(X)\otimes C_*(X)$,
and to this co--algebra one can apply the cobar construction: $\Omega C_*(X)$ is the free algebra on $\Sigma^{-1}C_*(X)$, with a natural differential which is immaterial to the discussion at this moment.

The theorem by Adams and Eilenberg--Moore is that if $\Omega X$ is  connected then $\Omega C_*(X)$ is
a model for chains on the based loop space $\Omega X$ of $X$.
This raises the question of iterating the construction, but, unlike $\Omega X$, which can be looped again, $\Omega C_*(X)$ is now an algebra and thus does not have an obvious cobar construction. To remedy this situation Baues introduced the following co--multiplication map:

\begin{multline}
\Delta(x)=
\sum_{\substack{k\geq 0\\0= i_0 <i_1< \dots < i_k =n}}
x_{({i_0},{i_1},\dots,{i_k})}\;\otimes\;\\
x_{({i_0},{i_0+1},\dots,{i_1})} x_{({i_1},{i_1+1},\dots,{i_2})}\cdots
x_{({i_{k-1}},{i_{k-1}+1},\dots,{i_k})}
\end{multline}
where $x\in X_n$ is an $(n-1)$-dimensional generator of $\Omega C_*(X)$, and $x_{(\alpha)}$ denotes its image under the simplicial operator specified by a monotonic sequence $\alpha$.

\begin{thm}\cite{Baues} If $X$ has a reduced one skeleton $|X|^1=*$, then the comultiplication, together with the free multiplication and the given grading, make $\Omega C_*(X)$ into a Hopf algebra.
Furthermore if $\Omega\Omega |X|$ is connected, i.e.\ $|X|$ has trivial 2-skeleton,
 then $\Omega\Omega C_*(X)$ is a chain model for $\Omega\Omega |X|$.
\end{thm}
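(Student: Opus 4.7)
The plan is to deduce the bialgebra structure from the general theorem announced in the introduction and to be proved in \S\ref{simplicialpar}, namely that any simplicial object yields a bialgebra via its associated cooperad--with--multiplication, then to upgrade this to a Hopf algebra using the hypothesis $|X|^1=*$, and finally to obtain the chain model statement by iterating the Adams and Eilenberg--Moore theorem recalled above.

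First I would observe that Baues' comultiplication has exactly the simplicial shape of Goncharov's coproduct \eqref{gontcoprod}: the sum is indexed by monotonic sequences $0=i_0<i_1<\cdots<i_k=n$, the first tensor factor records the principal face $x_{(i_0,\dots,i_k)}$, and the remaining factors record the complementary faces in their natural order. This is the generic output of the cooperad--with--multiplication associated to the simplicial object $X$, applied to $\Sigma^{-1}C_*(X)$ and extended multiplicatively to the free algebra $\Omega C_*(X)$. Coassociativity of $\Delta$ then amounts to the fact that nested monotonic refinements of $[0,n]$ can be contracted in either order, which is immediate at the cosimplicial level, while multiplicativity of $\Delta$ with respect to the free product is built into the construction. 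The reduced one-skeleton hypothesis $|X|^1=*$ is the topological incarnation of the normalization \eqref{emptycondeq}: every 1-simplex is degenerate, so the word-length-one contributions $x_{(i_j,i_j+1)}$ appearing in the coproduct reduce to a single basepoint generator which is identified with the unit of $\Omega C_*(X)$. After this identification the resulting differential graded bialgebra is graded connected, and the antipode exists and is unique by the standard recursive formula. The main obstacle at this stage is checking that $\Delta$ descends to this quotient: one must verify that any monotonic refinement producing trivial length-one faces either contributes the unit or is already absorbed into a neighbouring factor, which follows from the simplicial identities but is the genuine technical input.

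For the chain model claim in the second sentence I would invoke the Adams--Eilenberg--Moore theorem already recalled above. Since $|X|^1=*$ implies that $|X|$ is simply connected, the theorem gives a quasi-isomorphism $\Omega C_*(X)\simeq C_*(\Omega|X|)$ of differential graded algebras. To iterate, I need this equivalence to respect the coalgebra structure, and it is precisely the role of the Baues comultiplication to provide a chain-level model for the diagonal on the based loop space, so that $\Omega C_*(X)$ acquires the status of a differential graded bialgebra modelling $C_*(\Omega|X|)$. The hypothesis that $|X|$ have trivial 2-skeleton makes $\Omega|X|$ simply connected, hence $\Omega\Omega|X|$ connected, and the Adams--Eilenberg--Moore theorem applies once more to yield $\Omega\Omega C_*(X)\simeq C_*(\Omega\Omega|X|)$. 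The delicate point, namely that $\Delta$ is homotopy-compatible with the Pontryagin diagonal on the based loop space, is the content of the original computation of \cite{Baues} and should be cited rather than rederived here.
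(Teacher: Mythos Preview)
The paper does not give its own proof of this theorem: it is stated with the citation \cite{Baues} and is followed by a ``Discussion'' subsection, not a proof. This is a result being recalled from the literature in the prefatory \S\ref{hopfpar}, parallel to the way Theorems~\ref{g-one} and the Connes--Kreimer theorem are cited without proof.

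That said, your proposal is not misguided: what you outline for the first sentence is precisely how the paper later \emph{rederives} the bialgebra/Hopf structure in its own framework (Proposition~\ref{bialgfromsimp} and the surrounding material in \S\ref{simplicialpar}), and the Discussion paragraph immediately after the theorem statement says exactly this --- that the diagonal ``written by hand in \cite{Baues}'' is derived from the fact that simplices form an operad, and that the quotient identifying the one-skeleton with the unit is what produces a connected Hopf algebra. So for the Hopf-algebra claim you are aligned with the paper's later treatment, even though no proof is offered at the point where the theorem is stated.

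For the second sentence, however, the paper makes no attempt to reprove that $\Omega\Omega C_*(X)$ is a chain model for $\Omega\Omega|X|$. This is a homotopy-theoretic statement requiring Baues' explicit comparison of the comultiplication with the loop-space diagonal, and the paper simply cites it. Your sketch correctly identifies the delicate point (compatibility of $\Delta$ with the Pontryagin diagonal) and correctly says it ``should be cited rather than rederived here'' --- which is exactly what the paper does. So there is nothing to compare your argument against for this part; the paper defers entirely to \cite{Baues}.
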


\subsubsection{Discussion}
Historically, this is actually the first of the types of Hopf algebras we are considering.
With hindsight, this is in a sense the graded and noncommutative version of Goncharov and gives the Hopf algebra of Goncharov a
simplicial backdrop. There are several features, which we will point out. In our approach, the existence of the diagonal (co--product), written by hand in \cite{Baues}, is  derived from the fact that simplices form an operad. This can then
be transferred to a co--operad structure on any simplicial set. Adding in the multiplication as a free product (as is done in the cobar construction), we obtain a bi--algebra with our methods. The structure can actually be pushed back into the simplicial setting, rather than just living on the chains, which then explains the appearance of the shuffle products.

To obtain a Hopf algebra, we again need to identify $1$ with the generators of the one skeleton. This quotient  passes  through the  contraction of the one skeleton, where one now only has one generator. This is the equivalent to the normalization \eqref{emptycondeq}. We speculate that the choice of the \emph{chemin droit}  of Deligne can be seen as a remnant of this in further analysis. We expect that this gives an interpretation of \eqref{exchangeformula}.
The condition \eqref{dualcond} can be viewed as an orientation condition, which suggests to work with dihedral instead of non-\SSigma{} operads, see e.g.\ \cite{decorated}. Again this will be left for the future.

Lastly, the condition \eqref{2skeq} corresponds to the triviality of the 2-skeleton needed by Baues for the application to double loop spaces. At the moment, this is just an observation, but we are sure this bears deeper meaning.

\section{Bi-- and Hopf Algebras from (Co)-Operads}
\label{operadpar} \label{subsect:free}

In this section, we give a general construction, which encompasses all the examples discussed in \S\ref{hopfpar}.
We start by collecting together the results needed about operads, which we will later dualize to co--operads in \S\ref{freecooppar} in order to generalize the constructions.
 There are many  sources for further information about operads. A standard reference is \cite{MSS} and \cite{woods} contains the essentials with figures for the relevant examples.
Before going into the technical details, we will consider various Connes--Kreimer type examples of tress and forests for concreteness.
This will also lay out a  blueprint for the constructions.
This includes a discussions of the symmetric and non--symmetric case, an infinitesimal version together with co-derivations, Connes--Kreimer type amputations and grading.

There is an even more general theory using co-operads and co--operads with multiplication which is treated in
\S\ref{gencooppar}.

\subsection{Connes--Kreimer trees as a road map}
\label{roadmappar}
There are several versions of the Connes-Kreimer Hopf algebra which we will discuss and generalize.
Here we give a first look.

\subsubsection{Planar planted trees/forests}
The first are    planar planted trees. In a planar planted tree, the leaves are naturally ordered by the planar embedding. A planar planted tree has a marked half--edge or flag at the root vertex that is unpaired, viz. is not part of an edge. All other unpaired half--edges are called leaves. Leaf vertices are not allowed.
These planar planted trees form an operad, by gluing the operations $\tau_1\circ_i \circ \tau_2$ which glues the tree $\tau_2$ to $\tau_1$ by joining the root half--edge to the i--th leaf forming a new edge, see Figure \ref{treegraftfig}.

\begin{figure}[h]
    \centering
    \includegraphics[height=1in]{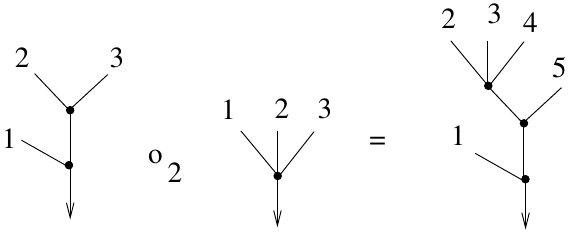}
    \caption{\label{treegraftfig}Grafting trees with labeled leaves. The tree is grafted onto the leaf number 2.}
\end{figure}

If $\tau$ has $k$ labeled leaves, there is a full gluing operation $\gamma(\tau,\tau_1,\dots,\tau_k) $ which  simultaneously glues all the trees $\tau_i$, where $\tau_i$ is glued onto the i--th leaf. We allow the tree $|$ that consists only of one leaf. This is a sort of identity. Consider the linear version ---that is the free Abelian group or vector space based on the set of leaf labeled trees. This is a graded space $\bigoplus_{n\geq 1} \O(n)$ where $\O(n)$ is generated by $n$ labeled trees, and we insist that there is at least one leaf.

In the dual $\Coop(n)=Hom(\O(n),k)$, we view the trees as their dual characteristic functions: $\tau\leftrightarrow \delta_\tau$ where $\delta_{\tau}(\tau)=1$ and $\delta_{\tau}(\tau')=0$ for all $\tau'\neq \tau$.
One gets operations dual to gluing by cutting  trees using an admissible cut $c$: $\check\gamma_c(\tau)=\tau_0\otimes (\tau_1\odo \tau_k)$,
where $\tau_0$ is the left over stump which has $k$ leaves, $c$ is a collection of edges or leaves, such that if they are cut, one is left with the spump $\t_0$ and a collection of trees $\t_1\kdk\t_k$. The leaves and root half--edge cut into two leaves, see the Figure  \ref{fig:coproducts2a}, where we have replaced the free multiplication $\otimes$ by juxtaposition. If $n_i$ are the number of leaves of the $\tau_i$ and $n$ is the number of leaves of $\tau$, then $\sum_{i=1}^k n_i=n$, see Figure \ref{fig:coproducts2a}.

\begin{figure}[h]
    \centering
    \includegraphics[width=0.8\textwidth]{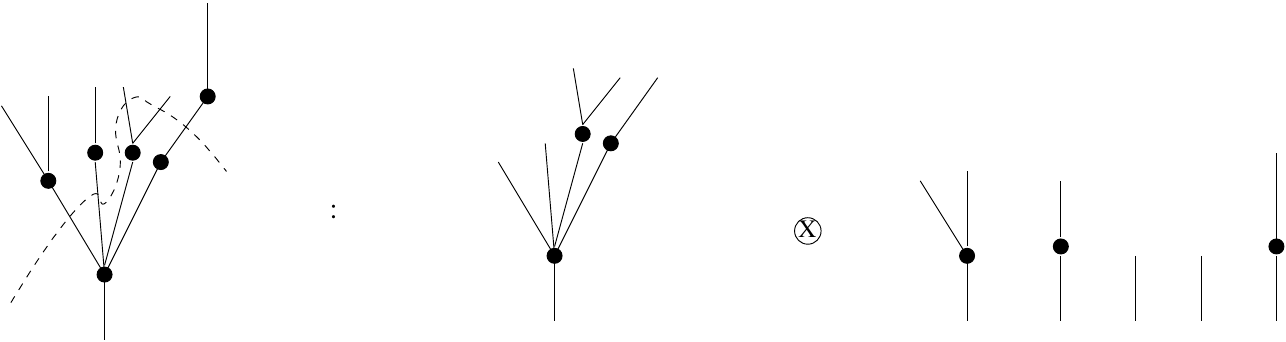}
    \caption{ \label{fig:coproducts2a} A cut in the non--symmetric planar case. Then the right side is  an ordered tensor product. Alternatively, the same cut can be seen as a term on the full coinvariants, viz.\ the unlabeled version where on the right side, the forest is a symmetric product. }

\end{figure}

Let $\Coop=\bigoplus_n \Coop(n)$.
Summing over all possible cuts, one gets the map $\check\gamma:\Coop \to  \bar{T}\Coop$ dual to $\gamma$, where $\bar{T}\Coop$ is the  reduced tensor algebra, which is the free (non--unital)   algebra on $\Coop$.
One can then obtain a bi--algebra $\B'=T\Coop$ by extending $\check\gamma$ to $T\Coop$ via the bi--algebra equation: $\Delta(ab)=\sum a^{(1)}b^{(1)}\otimes a^{(2)}b^{(2)}$ in Sweedler notation, again replacing the free multiplication $\otimes$ by juxtaposition. The bi-algebra $\B'$ is the bi--algebra of planar planted forests. It is unital and co-unital,
with the co-unit evaluating to 1 on a generator of the form $||\cdots |$ and $0$ on all other generators. Here the empty forest is the unit $1\in k\subset T\Coop$.  The bi--algebra is graded. The degree of a forest is the total number of leaves $n$ minus the number of trees in the forest $l$. The generalization of this construction is Theorem \ref{bialgthm}

To obtain a Hopf algebra, one needs to take the quotient by the two sided ideal $\I$ generated by $1-|$, i.e.\ $\H=\B'/\I$.

The result is a   connected  bi--algebra and hence Hopf. This is generalized by Theorem \ref{hopfthm} where we obtain a non--commutative Hopf algebra from a non-$\Sigma$ operad under certain conditions.  The conditions guarantee that the quotient is connected  bi--algebra. The grading descends to the quotient, and  is related to the coradical degree, see Example \ref{gradingex} and \S\ref{gradingpar}.

\subsubsection{Leaf--labeled rooted trees/forests}
The construction above, with modifications,  works in the case of leaf labeled rooted trees. In this case there is no natural order on the leaves, which is why one has to add labels to them in order to define the gluing. Labeling the leaves of a tree with $n$ leaves by $1,\dots, n$, we can again define the $\tau_1\circ_i\tau_2$, as well as $\gamma$. Here we have to take care about the new labeling; this is done in the standard operadic way, keep the labeling of the unglued tails of $\tau_1$ up to label $i-1$ as before, this is followed by the enumerated tails of $\tau_2$ in their increasing order, and then continues with the rest of the unglued tails of $\tau_1$ in their order.
When we want to dualize, we, however see, that cut would yield no labels on the leaves of the stump $\tau_0$, see Figure \ref{fig:coprducts}.

\begin{figure}[h]
    \centering
    \includegraphics[width=0.8\textwidth]{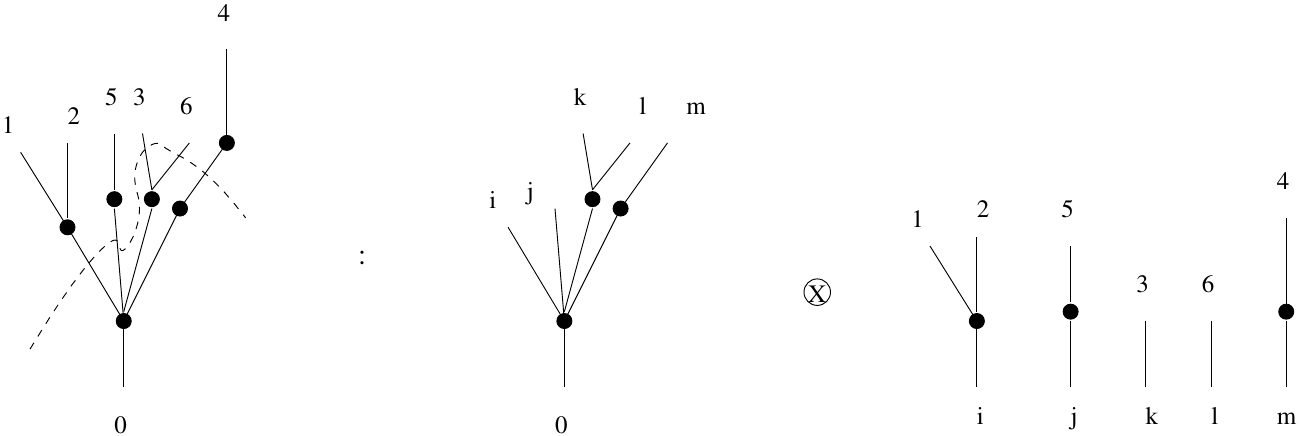}
    \caption{ One term of the dual to $\gamma$ as given by a cut in the labeled case. The labels $i,j,k,l,m$ indicate the parring of the half edges after severing the edges.
    }
    \label{fig:coprducts}
\end{figure}

Furthermore, since there is no labeling, there is also no order on the forest that results from  cutting or deleting $\tau_0$.
This is why one should consider the unlabeled duals, that is the coinvariants $\Coop(n)_{\SS_n}$, where the symmetric group $\SS_n$ permutes the labels. Set $\Coop_{\SS}=\bigoplus_n \Coop(n)_{\SS_n}$ then a cut on the unlabeled tree gives a morphism
morphism $\bar\gamma^{\vee}:\Coop_{\SS}\to \Coop_{\SS}\otimes S\Coop_{\SS}$ where $S$ is the free symmetric algebra on $\Coop_\SS$, see Figure \ref{fig:coproducts2a} for an example.

To obtain a Hopf algebra, we again take the quotient by the two sided ideal $\I$ generated by $1-|$. This is generalized by Theorem \ref{symhopfthm}, where we obtain a commutative Hopf algebra from an operad, again under certain conditions that guarantee that the quotient $\H=\B'/\I$ is connected.
The grading is as in the planar case.

There are several intermediate cases, one of which uses the equivariant tensor product, see Figure \ref{fig:coproducts2b}  and Remark \ref{coinvrmk}. Another version is given by incorporating certain symmetry factors; cf.\ \cite{CK,CK1,CK2,CL} and \cite[\S\ref{P2-alternatepar}]{HopfPart2}.

\begin{figure}[h]
    \centering
    \includegraphics[width=0.8\textwidth]{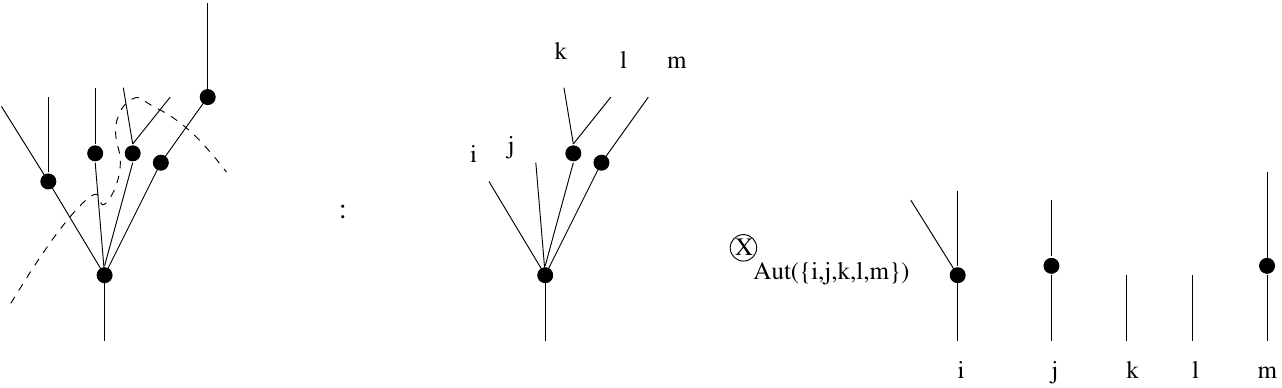}
    \caption{An example of a cut  for the coinvariants yielding the relative tensor product according to Remark \ref{coinvrmk} part (2).}
    \label{fig:coproducts2b}
\end{figure}

\subsubsection{Original version}
In the original version of Connes and Krei\-mer \cite{CK}, the trees are rooted, but have no half--edges or tails. There is a planar and a non--planar version, see e.g.\ \cite{foissyCR1,foissyCR2}. These trees are not glued, but only cut using admissible cuts. During the cutting both half edges of a cut edge are removed, see Figure \ref{CKcoprod}.
In order to obtain this structure from the ones above, one has to amputate the leaves. In the specific example this can easily be done, but in the general setup
this can be achieved by adding certain structure maps, see \S\ref{freeampsec}. An alternative view of this procedure is given by adding trees without tails, see \S\ref{colimpar}.

\begin{figure}[h]
    \centering
    \includegraphics[width=0.8\textwidth]{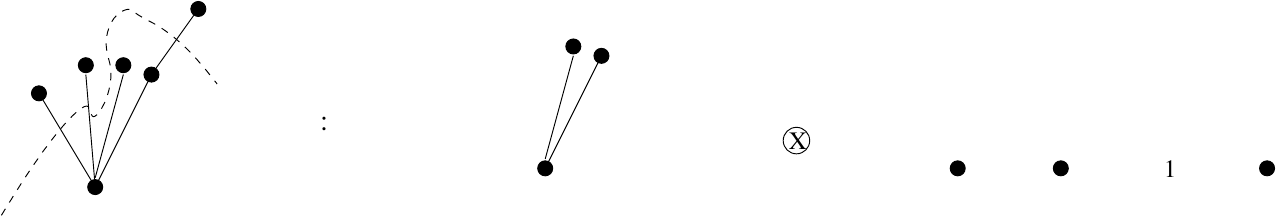}
    \caption{Co--product for the amputated version. The same example for the amputated version: First all tails are removed.  After cutting all newly formed  tails are amputated and  empty trees/forests are represented by $1=1_K$. Notice that indeed $||$ from Figure \ref{fig:coproducts2a} is set to $1_K$ as is done in the Hopf quotient.}
    \label{CKcoprod}
\end{figure}

Notice that the amputation of tails identifies $|$ with $1$ and  does not preserve the degree. The grading on $\H_{CK}$ is instead given by the co-radical degree, which in the case at hand is the number of vertices. The relationship is discussed in \S\ref{gradingpar}.

\subsection{Operads}
We now formalize and generalize the construction above, starting
with a review of operads in \S\ref{operadsec}. We will present the relevant details here and refer to \cite{MSS,woods}, for a full treatment.

In general, a non--$\Sigma$ operad has the simultaneous gluing operations $\gamma$. If some  additional conditions are satisfied,  these dualize to $\check\gamma$, which yields a co--product on the free algebra over the graded dual to the operad. The single gluing operations $\circ_i$ assemble to a pre--Lie product, which dually gives the structure of an infinitesimal co--pre--Lie algebra, see \S\ref{pseudocooperadpar}.
For  symmetric operads, we obtain a similar structure on the free symmetric algebra over the graded dual, see \S\ref{freesymsec}.
To obtain Hopf algebras, one takes a quotient by an ideal, see \S \ref{freehopfsec}. The result is connected --and hence Hopf--- if certain conditions are met. Finally, the amputated version is discussed in \S\ref{freeampsec}.

\subsubsection{Non-\SSigma{} pseudo-operads}
\label{operadsec}
Loosely an operad is a collection of objects or ``somethings'' with $n$ inputs and one output, like functions of several variables. And just like for functions there are permutations of variables and substitution operations.

To make things concrete: consider the  category $\kVect$ of $k$--vector spaces with the monoidal structure $\otimes$ given by the tensor product $\otimes_{k}$.
{\em A non-\SSigma{} pseudo-operad} in this category is given by a collection $\O(n)$ of  Abelian groups,  together with structure maps
\begin{equation}
\label{circieq}
\circ_i: \O(k)\otimes \O(m)\to \O(k+m-1) \text{ for } 1\leq i\leq k
\end{equation}
which are associative in the appropriate sense, that is:
\begin{equation}
(-\circ_i -)\circ_j -=\left\{\begin{array}{cl}
-\circ_i( -\circ_{j-i+1}-)&\text{ if  }i\leq j <m+i\\
((-\circ_j -)\circ_{i+n-1}-)\pi&\text{ if }1\leq j<i.\end{array}\right.
\end{equation}
Here $\pi=(23):\O(k)\otimes \O(m)\otimes \O(n)
\cong\O(k)\otimes \O(n)\otimes \O(m)$.

\begin{rmk}
The data we need to write down the equations above are  a monoidal, aka.\ tensor, product $\otimes_k$---to write down the morphisms $\circ_i$,
   associativity constraints $A_{UVW}:(U\otimes V)\otimes W\stackrel\cong\longrightarrow U\otimes (V \otimes W): (u\otimes v)\otimes w\mapsto u\otimes (v\otimes w)$, ---these are needed to re--bracket---  and commutativity constraints $C_{UV}:U\otimes V\stackrel\cong\longrightarrow \otimes U: u\otimes v \mapsto v \otimes u$ ---these are needed to permute the factors---. Additional data for a monoidal category are a unit $\unit$, $\unit=k$ in $\kVect$, and the unit constraints $U_L:V\otimes \unit \stackrel\cong\longrightarrow V, U_R: k\otimes V\to V$.

Thus, in general the $\O(n)$ are objects in a symmetric monoidal category, which is the following data:  a category $\C$ together with a functor $\otimes: \C\times \C \to \C$,  the $A_{XYZ}, C_{XY}, \unit, U_L,U_R$ which have to satisfy natural compatibility  axioms, see e.g.\cite{Kassel}.

The categories we will consider are the category of sets $\Set$ with disjoint union $\amalg$ and unit $\emptyset$,
$k$--vector spaces  $\kVect$ with $\otimes_k$, differential graded $k$ vector spaces $\dgVect$ with $\otimes_k$, unit $k$ in degree $0$,  the usual additive grading $\deg(u\otimes v)=\deg(u)+\deg(v)$  and the graded commutativity constraint $C_{UV}(u\otimes v)=(-1)^{\deg(u)+\deg(v)}(v\otimes u)$,
 Abelian groups $\Ab$ with $\otimes_\Z$ with unit $\Z$, or $\gAb$ graded Abelian groups with $\otimes_\Z$, unit $\Z$ in degree $0$,  additive grading and graded commutativity. The associativity and unit constraints are the obvious ones.

\end{rmk}
We call  $\O$ reduced if $\O(1)=\unit$,
the unit of the monoidal category.

\subsubsection{Pseudo-operads}
If we add the condition that each $\O(n)$ has an action of the symmetric group $\SS_n$ and that the $\circ_i$ are  equivariant with respect to the symmetric group actions in the appropriate sense, we arrive at the definition of a  pseudo-operad. Given a non--$\Sigma$ pseudo--operad, we can always produce a pseudo operad by tensoring $\O(n)$ with the regular representation of $\SS_n$.

\begin{ex}
\label{endex}
A very instructive example is that of multivariate functions, given by the collection $\{End(X)(n)=Hom(X^{\otimes n},X)\}$. The $\circ_i$ act as substitutions, that is, $f_1\circ_i f_2$  substitutes the function $f_2$ into the $i$th variable of $f_1$. The symmetric group action permutes the variables. The equivariance then states that it does not matter if one permutes first and then substitutes or the other way around, provided that one uses the correct permutation.

As it is defined above $\{End(X)(n)\}$ is just an pseudo--operad in sets. If
 $X$ is  a vector space $V$ over $k$, and $\otimes$ is the tensor product over $k$ and the functions are  multilinear and
$Hom(V^{\otimes n},V)$ are again a vectors spaces. That is $Hom$ is actually what is called an internal Hom, denoted by $\underline{Hom}$, i.e.\ $\underline{Hom}(V^{\otimes n},V)$ is the vector space of multilinear maps.  If one takes $X$ to be a set or a compactly generated Hausdorff space $\otimes$ stands for the Cartesian product  $\times$ and one uses the compact-open topology on the set of maps to obtain a space --- again an internal Hom. More generally, if the monoidal category $\C$ is closed, that means that internal Homs exist and $\otimes$ and $\underline{Hom}$ are adjoint, viz.\   $Hom(U\otimes V,W)\leftrightarrow Hom(U,\underline{Hom}(V,W))$ are in natural bijection, then the $\underline{End}(n)=\underline{Hom}(V^{\otimes n},V)$ form an operad  in $\C$.
\end{ex}

\subsubsection{The  main examples}

\label{examplesec}
Here we give the main examples which underlie the three Hopf algebras above.  Notice that not all of them directly live in $\Ab$ or $\kVect$, but for instance live in $\Set$. There are then free functors, which allow one to carry these over to $\Ab$ or $\kVect$ as needed.
\begin{ex} \label{trees-example}
The operad of leaf-labeled rooted trees. We consider the set of rooted trees with $n$-labeled leaves, which means a bijection is specified between the set of leaves and  $\{1,\dots,n\}$. Given a $n$-labeled tree $\tau$ and an $m$-labeled tree $\tau'$, we define an $(m+n-1)$-labeled tree $\tau\circ_i\tau'$ by grafting the root of $\tau'$ onto the $i$th leaf of $\tau$ to form a new edge. The root of the tree is taken to be the root of $\tau$ and the labeling first enumerates the first $i-1$ leaves of $\tau$, then the leaves of $\tau'$ and finally the remaining leaves of $\tau$, see Figure \ref{treegraftfig}.

The action of $\SS_n$ is given by permuting the labels.

There are several interesting sub--operads, such as that of trees whose vertices  all have valence $k$. Especially interesting are the cases $k=2$ and $3$: also known as the linear and the binary trees respectively. Also of interest are the trees whose vertices have valence at least $3$.
\end{ex}

\begin{ex} The non-\SSigma{} operad of
(unlabeled)
planar planted trees. A planar planted tree is a planar rooted tree with a linear order at the root. Planar means that there is a cyclic order for the flags at each vertex. Adding a root promotes the cyclic order at all of the non-root vertices to a linear order, the flag in the direction of the root being the first element. For the root vertex itself, there is no canonical choice for a first vertex, and planting makes a choice for first flag, which sometimes called the root flag. With these choices, there is a linear order on all the flags and in particular there is a linear order to all the leaves, that is non--root tails.\footnote{Here and in the following, if not otherwise explained, we refer to Appendix \cite[Appendix \ref{P2-graphsec}]{HopfPart2} for the nomenclature and definitions.} Thus, we do not have to give them extra labels for the gluing: there is an unambiguous $i$-th leaf for each planar planted tree with $\geq i$ leaves, and $\tau \circ_i \tau'$ is the tree obtained by grafting the root flag of $\tau'$ onto that $i$-th leaf.

Restricting the valency of the vertices to be either constant, e.g.\ 3--valent vertices only, or less or equal to a given bound yields sub--operad.
\end{ex}

\begin{ex}
\label{ordsurjex}
The  operad of order preserving surjections, also known as planar planted labeled corollas, or just the associative operad.
Consider $n$-labeled planar planted corollas, that is, rooted trees with one vertex, leaves labeled by $\underline{n}=\{1,\dots,n\}$ and an order on them.
For an $m$-labeled planar planted corolla $c_m$ and an $n$-labeled planar planted corolla $c_n$ define
$c_m\circ_i c_n=c_{n+m-1}$. This is  the  $(n+m-1)$-labeled planar planted corolla with the same relabeling scheme as in example \ref{trees-example} above.
This corresponds to splicing together the orders on the sets.
Alternatively, the gluing can be thought of as the gluing on planar planted labeled trees followed by the edge contraction of the new edge, see Figure \ref{corollafig}.

\begin{figure}[h]
    \centering
    \includegraphics[height=.8in]{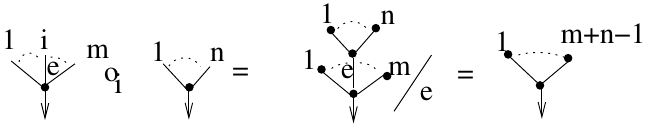}
    \caption{\label{corollafig}Grafting of rooted corollas as first grafting trees and then contracting the new edge.}
    \end{figure}

Alternatively we can think of such a corolla as the unique order preserving map from the ordered set $(\underline{n}=\{1,\dots, n\},<)$, to the one element set $\underline{1}=\{1\}$ with its unique order.
The composition $\circ_i$,  of the maps is given by substitution, that splicing in $(\underline{n},<)$ into the position of $i$. This corresponds to gluing the planar planted corollas.
The $\SS_n$ action permutes the labels and acts effectively on the possible orders. Forgetting the $\SS_n$ operations this is a non-$\Sigma$ operad.

There is another non-\SSigma{} version, that of unlabeled planar planted corollas.
If $c_n$ is unique unlabeled planar planted corolla, then the operations are $\circ_i:c_m\otimes c_n\mapsto c_{n+m-1}$. We obtain back the symmetric operad above as $c_n\times \SS_n$, where the operad structure on $\SS_n$ is given by block permutations, see e.g.\ \cite{MSS,woods} and  $\SS_n$ acts on itself. The identification uses that an element $\sigma$ of $\SS_n$ gives a unique order to the set $\underline{n}: \sigma(1)<\cdots < \sigma(n)$ and the block permutation corresponds to splicing in the orders, which is alternatively just the re--labeling scheme, see \cite{woods}.
Forgetting the $\SS_n$ action this also identifies  the unlabeled planar planted corollas, with the
the non--\SSigma{} sub--operad of order preserving surjections of the sets $\underline{n}$ with their natural order.
Vice--versa, the unlabeled
version is given by the $\SS_n$ coinvariants.

\end{ex}
\begin{ex}
Simplices form a non-\SSigma{} operad (see also Proposition~\ref{deltaoperad} for another dual operad structure). We consider $[n]$ to be the category with $n+1$ objects $\{0,\dots, n\}$ and morphisms generated by the chain $0\to 1 \to \dots \to n$. The $i$--th composition of $[m]$ and $[n]$ is given by the following functor $\circ_i:[m]\sqcup [n]\to [m+n-1]$. On objects of $[m]: \circ_i(l)=l$ for $l<i$ and $\circ_i(l)=l+n-1$ for $l\geq i$. On objects of $[n]:\circ_i(l)=i-1+l$. On morphisms:  the morphism $l-1\to l$ of $[m]$ is sent to the morphism $l-1\to l$ of $[m+n-1]$ for all $l<i$, the morphism $i-1\to i$ of $[m]$ is sent \ to the composition of $i-1\to i \dots \to i+n-1$ in $[m+n-1]$, the morphism $l-1\to l$ of $[m]$ to $l+n-1\to l+n$ of $[m+n-1]$ for $l>i$ and finally  sends the morphism $k\to k+1$ of $[n]$ to $k+i\to k+1+i$.

In words, one splices the chain $[n]$ into $[m]$ by replacing the $i$-th link, see Figure \ref{splicefig}. This is of course intimately related to the previous discussion of order preserving surjections. In fact the two are related by Joyal duality, cf.\ \ref{Joyalapp}, as we will explain in \cite[\S\ref{P2-anglepar}, in particular Figure \ref{P2-angle}]{HopfPart2}.

\begin{figure}
\resizebox{\textwidth}{!} { $$
\xymatrix@C=24pt{
0'\ar[r]& 1'\ar[r]& \dots\ar[r]&  n'& \circ_i &0''\ar[r]& 1''\ar[r]&\dots\ar[r]& m''}
$$}\\[2mm]

\resizebox{\textwidth}{!}{
$$\xymatrix@C=12pt{
&&&\circ_i(0'')\ar@{=}[d]\ar[r]&\circ_i(1'')\ar@{=}[d]\ar[r]&\dots\ar[r]&\circ_i(m'')\ar@{=}[d]&\\
0\ar@{=}[d]\ar[r]& 1 \ar@{=}[d]\ar[r]& \dots\ar[r]& i-1\ar@{=}[d]\ar[r]&i\ar[r]&\dots\ar[r]&
i+m-1\ar@{=}[d]\ar[r]&\dots\ar[r]&m+n-1\ar@{=}[d]\\
\circ_i(0')\ar[r]&\circ_i(1')\ar[r]&\dots\ar[r]&\circ_i((i-1)')\ar[rrr]&&&\circ_i(i')\ar[r]&\dots\ar[r]&
\circ_i(n')
}
$$}
\caption{\label{splicefig}
Splicing together simplices. Primes and double primes are mnemonics only}
\end{figure}

\end{ex}

\subsubsection{The $\circ$-product aka.\ pre-Lie structure}
One important structure going back to Gerstenhaber \cite{Gerst} is
the following bilinear map:

\begin{equation}a\circ b:=\sum^n_{i=1} a\circ_i b \text{ if $a$ has operad degree $n$}
\end{equation}

This product is neither commutative nor associative but pre--Lie, which means that it satisfies the equation $(a\circ b)\circ c- a\circ (b \circ c)=
(a\circ c)\circ b- a\circ (c \circ b)$.

An important consequence is that  $[a,b]=a\circ b -b\circ a$ is a Lie bracket.

\begin{rmk}
\label{shiftedgerstrmk}
One considers a graded version with ``shifted'' degrees in which $\O(n)$ has degree $n-1$. The operation $\circ_i$ are then of degree $1$ and one sets:

\begin{equation}a\circ b:=\sum^n_{i=1} (-1)^{(i-1)(n-1)}a\circ_i b \text{ if $a\in \O(n)$}
\end{equation}
The algebra is graded pre--Lie \cite{Gerst} and the commutator is odd Lie.
This is done e.g.\ in the co--bar construction and is highly relevant for several constructions, see \cite{KWZ} for a full discussion

\end{rmk}

\subsubsection{(Non-\SSigma{}) Operads}
Another almost equivalent way to encode the above data is as follows. A non--\SSigma{} operad is a collection $\O(n)$ together with structure maps
\begin{equation}
\label{mayeq}
\gamma=\gamma_{k;n_1,\dots,n_k}:\O(k)\otimes \O(n_1)\odo \O(n_k)\to \O(\sum_{i=1}^k n_i)
\end{equation}
 Such that map $\gamma$ is associative in the sense that if $(n_1,\dots,n_k)$ is $k$ a partition of $n$, and $(n^i_1,\dots, n^i_{l_i})$ are $l_i$ partitions
 of the  $n_i$,  $l=\sum_{i=1}^k l_i$ then
\begin{multline}
\label{assopeq}
\gamma_{k;n_1,\dots, n_k}\circ id \otimes \gamma_{l_1;n^1_1,\dots, n^1_{l_1}}\otimes \gamma_{l_2;n^2_1,\dots, n^2_{l_2} } \odo  \gamma_{l_kn^k_1,\dots, n^k_{l_k}}=\\
\gamma_{l;n^1_1,\dots,n^1_{l_1},n^2_{1},\dots, n^2_{l_2},\dots, n^k_{1},\dots, n^k_{l_k}}\circ
  \gamma_{k;l_1,\dots, l_k}\otimes id^{\otimes l}\circ\pi
\end{multline}
as maps $ \O(k)\otimes \bigotimes_{i=1
}^k( \O(l_i)\otimes \bigotimes_{j=1}^{l_i} \O(n_i^j))\to \O(n)$,
where $\pi$ permutes the factors of the $\O(l_i)$ to the right of $\O(k)$.
Notice that we chose to index the operad maps, since this will make the operations easier to dualize.
The source  and target of the map are then determined by the length $k$ of the index, the indices $n_i$ and their sum.

For an operad, aka.\ symmetric operad, one adds the data of an $\SS_n$ action on each $\O(n)$ and demands that the map $\gamma$ is equivariant, again in the naturally appropriate sense, see Example \ref{endex} or \cite{MSS,woods}.

\begin{df}
A (non--$\Sigma$) operad is called {\em locally finite}, if any element $a_n\in \O(n)$ is in the image of only finitely many $\gamma_{k;n_1,\dots,n_k}$,
where $n=\sum_i n_i$.
\end{df}

\begin{lem}
\label{locfinOlem}
If $\O(0)$ is empty in $\Set$ or zero in an Abelian category, then $\O$ is locally finite.
\end{lem}
\begin{proof}
There are only finitely many partitions of $n$ into $k$ non--zero elements.
\end{proof}

\begin{ex}
If we consider planar planted leaf labeled trees and allow leaf vertices, that is vertices with no inputs, then there is an $\Coop(0)$, namely trees without any leaves, but the operad is still locally finite.
Indeed the number of vertices is conserved under gluing, so there are only finitely many possible pre--images.
\end{ex}

\subsubsection{Morphisms}
Morphisms of (pseudo)--operads $\O$ and $\P$ are given by a family of morphisms $f_n:\O(n)\to\P(n)$ that commute with the structure maps. E.g.\ $f_n(a)\circ_i^{\P} f_m(b)=f_{n+m-1}(a\circ_i b)$.
If there are symmetric group actions, then the maps $f_n$ should be  $\SS_n$ equivariant.

\begin{ex}
\label{mapexamples}
If we consider the operad of rooted leaf labeled trees $\O$ there is a natural map to the operad of  corollas $\P$ given by $\tau \mapsto \tau / E(\tau)$,
where $\tau / E(\tau)$ is the corolla that results from
contracting all edges of $\tau$.  This works in the planar and non--planar version as well as in the pseudo-operad setting, the operad setting and the symmetric setting. This map contracts all linear trees and identifies them with the unit corolla $\dottree$, which has one leaf.
Furthermore, it restricts to operad maps for the sub--operad of $k$-regular or at least $k$-valent trees.

An example of interest considered in \cite{Gont} is the map restricted to planar planted $3$-regular tress (sometimes called binary).
The kernel of this map is the operadic ideal generated by the associativity equation between the two possible planar planted binary trees with three leaves.
\end{ex}

\subsubsection{Units}
The two notions of pseudo-operads and operads become equivalent if one adds a unit.

\begin{df} A \emph{unit} for a pseudo-operad is an element $u\in \O(1)$ such that
$u\circ_1 b=b$  and $b\circ_i u=b$ for all $m$,
for all $1\leq i\leq m$ and all $b\in\O(m)$.

A unit for an operad is an element $u\in \O(1)$ such that
\begin{equation}
\label{uniteq}
\gamma_{1;n}(u;a)=a\text{ and }
\gamma_{n;1,\dots,1}(a;u,\dots,u)=a
\end{equation}
\end{df}

There is an equivalence of categories between unital pseudo--operads and unital operads. It is given by the following formulas: for $\a\in \O(n), b\in \O(m)$ with $n\geq i$
\begin{equation}
\label{unitcircieq}
a\circ_ib=\gamma_{n,1,\dots,1,m,1,\dots,1}(a;u,\dots,u,b,u,\dots,u) \text{ $b$ in the $i$--th place}
\end{equation}
and vice--versa for $a\in \O(k)$:
\begin{equation}
\gamma_{k;n_1,\dots,n_k}(a;b_1,\dots,b_k)=((\dots ((a\circ_k b_k)\circ_{k-1} b_{k-1}) \dots )\circ_{1} b_1)
\end{equation}

Morphisms for (pseudo)--operads with units should preserve the unit.
\begin{rmk}
\label{argebrarmk}
The component $\O(1)$ always forms an algebra via $\gamma:\O(1)\otimes \O(1)\to \O(1)$.
If there is an operadic unit, then this algebra is unital.
More precisely, the algebra is over $R=Hom(\unit,\unit)$.
In the case of operads in $k$-vector spaces the algebra is a $k=Hom(k,k)$--algebra, in the case of operads in Abelian groups, this is a $\Z$ algebra and in
in the case of operads in sets, being an algebra reduces to being monoid.
\end{rmk}

\begin{rmk}
 In order to transport $Set$ operads to Abelian groups and vector spaces or $R$-modules, we can consider the free Abelian groups generated by the sets $\O(n)$ and  extend coefficients.
 In particular, we can induce co--operads in different categories, by extending coefficients, say from $\Z$ to $\Q$ or a field $k$  in general.
 More generally, we can consider,  the adjoint to the forgetful functor \cite{kellybook} for any enriched category.
\end{rmk}

\subsubsection{Non--connected version of an operad}

\begin{as}
\label{opas}
In this section for concreteness, we will assume that we are in  an  Abelian monoidal categories  whose bi--product distributes over tensors.
and use $\bigoplus$ for the biproduct. The usual categories to keep in mind are those of Abelian groups $\Ab$, graded Abelian groups $g\Ab$, $\kVect$ vector spaces over a field $k$.
If $\O$ is a $\Set$ operad, we tacitly consider its Abelianization, that is $\{\Z\O(n)\}$ which we still denote by $\O$.
We will also assume that $\O$ is locally finite.
\end{as}

In \cite[\S A.1]{KWZ}, we introduced the non--connected operad corresponding to an operad.
\begin{equation}
\O^{nc}(n):=\bigoplus_{k,(n_1,\dots,n_k):\sum_i n_i=n}\O(n_1)\odo \O(n_k)
\end{equation}
There is a natural multiplication $\mu:=\otimes:\O^{nc}(n)\otimes \O^{nc}(m)\to \O^{nc}(n+m)$ which identifies $\bigoplus_n\O^{nc}(n)=\bar T(\bigoplus_n\O(n))$ that is the reduced tensor algebra.
The pseudo--operad structure naturally extends, see \cite{KWZ} and the operad structure extends via
\begin{multline}
\label{opnceq}
 \gamma^{nc}(a_k\otimes b_l; a_{n_1} \kdk  a_{n_k}, b_{m_1}\kdk b_{m_l})\\=\gamma(a_k;  a_{n_1}\kdk  a_{n_k})\otimes \gamma(b_l; b_{m_1}\kdk b_{m_l})\end{multline}
for $a_i, b_i\in \O(i)$.
More precisely,
$\gamma^{nc}\circ (\mu\otimes id) \circ \pi=\mu\circ (\gamma\otimes \gamma)$ where $\pi$ is the permutation that permutes the correct factor into the second  place.
The association $\O\to \O^{nc}$ is functorial.

\begin{rmk}
In fact, the operad structure on $\O^{nc}$ is up to permutation exactly the operation that appears in l.h.s.\ of the associativity equation \eqref{assopeq}. The nc--version also appears naturally in the formulation in terms of Feynman categories, cf.\ \cite[\S\ref{P2-enrichedpar}]{HopfPart2}.
\end{rmk}
\begin{ex}
In the example of planar planted trees, $\O^{nc}(n)$ are planar planted forests, these are ordered collections of planar planted trees,
with $n$ leaves. In particular $\O(n_1)\odo \O(n_k)$ contains  forests with $k$ trees. The $i$-th tree has $n_i$ leaves and the total number of leaves is $n:=\sum_{i=1}^kn_i$.
The operad gluing grafs a forest of $n$ trees onto a forest with $n$ leaves by grafting the $i$--th tree to the $i$--th leaf. The pseudo--operad structure does one of these graftings and leaves the other trees alone, but shifts them into the right position, see \cite{KWZ} or use \eqref{unitcircieq}.
\end{ex}

\subsubsection{Bi--grading and algebra over monoid structure}
There is another way to view the operad $\{\O^{nc}(n)\}$. First notice that there is an internal grading by tensor length. Set $\O=\bigoplus_k \O(k)$ and let $\O^{nc}(n,k)\subset \O^{nc}(k)$ be the tensors of length $k$.
Then $\O^{nc}(n)=\bigoplus_k\O^{nc}(n,k)$ an set $\O^{nc}=\bigoplus_n\O^{nc}(n)$. Summing the $\gamma_{k;n_1\kdk n_k}$ over the partitions $(n_1\kdk n_k)$ with fixed $k$ and $n$, one obtains  maps
$\gamma_{k,n}:\O(k)\otimes \O^{nc}(n,k)\to \O(n)$. Further summing over the $\gamma_{k,n}$ over $k$, we obtain a map $\gamma_n:\O\otimes \O^{nc}(n)\to \O(n)$, lastly summing over $n$, we obtain a map $\gamma:\O\otimes \O^{nc}\to \O$.
Note that by un--bracketing tensors there is an identification:
$(\O^{nc})^{nc}=\bar T(\bar T \O)\simeq \bar{T}\O= \O^{nc}$.

\begin{prop}
\label{opmonoidprop}
Under  Assumption \ref{opas}, the associativity of $\gamma$ implies that $\gamma^{nc}$ induces
 an associative monoid structure on $\O^{nc}$ and $\O$ is a left module over $\O^{nc}$ via $\gamma$.
\end{prop}
\begin{proof}
Indeed $\tilde\gamma^{nc}:\O^{nc}\otimes \O^{nc}\stackrel\simeq\to \O^{nc}\otimes (\O^{nc})^{nc}\stackrel{\gamma^{nc}}{\to} \O^{nc}$ is a multiplication. The multiplication $\tilde\gamma^{nc}$ is
associative by the associativity of $\gamma^{nc}$, which follows from that of $\gamma$ via the definition.
 The associativity diagram corresponding to \eqref{assopeq} is
\begin{equation}
\label{assocdiagrameq}
\xymatrix{
\O\otimes \O^{nc}\otimes \O^{nc}\ar[r]^-{id\otimes \gamma^{nc}}\ar[d]^{\tilde\gamma\otimes id}& \O\otimes \O^{nc}\ar[d]_{\gamma}\\
\O\otimes \O^{nc}\ar[r]_{\gamma}&\O
}
\end{equation}
which is, at the same time, the statement that $\O$ is a left module over $\O^{nc}$.\end{proof}

\subsection{Co--operads}
\label{freecooppar}
The relevant constructions will all involve the dual notion to  operads, that is co--operads. In terms of trees this provides the transition from grafting to cutting.

\subsubsection{Non-\SSigma{} co--operads}

Dualizing the notion of an operad, we obtain
 the notion of a co--operad. That is, there are structure maps dual to the $\gamma_{k;n_1\kdk n_k}$   \eqref{mayeq}  for all $n,k$ and partitions $(n_1,\dots,n_k)$ of $n$:
  \begin{equation}
\check\gamma_{k;n_1, \dots, n_k}:\Coop(n)\to   \Coop(k)\otimes \Coop(n_1)\odo \Coop(n_k)
\end{equation}
 which satisfy the dual relations to \eqref{assopeq}.
That is,
\begin{multline}
\label{coopcoasseq}
id \otimes \check\gamma_{l_1;n^1_1,\dots, n^1_{l_1}}\otimes \check\gamma_{l_2;n^2_1,\dots, n^2_{l_2} } \odo  \check\gamma_{l_k;n^k_1,\dots, n^k_{l_k}}
\circ\check\gamma_{k;n_1,\dots, n_k}\\=
\pi\circ ( \check\gamma_{k;l_1,\dots, l_k}\otimes id^{\otimes l})\circ\check\gamma_{l;n^1_1,\dots,n^1_{l_1},n^2_{1},\dots, n^2_{l_2},\dots, n^k_{1},\dots, n^k_{l_k}}
\end{multline}
as maps $\Coop(n)\to \Coop(k)\otimes \bigotimes_{i=1}^k (\Coop(l_i)\otimes \bigotimes_{j=1}^{l_i} \Coop(n_i^j))$, for any
 $k$-partition $(n_1,\dots,n_k)$  of $n$ and $l_i$-partitions $(n_1^i,\dots, n_{l_i}^i)$ of  $n_i$. Either side of the relation determines these partitions and hence determines the other side.
Here $l=\sum l_i$ and $\pi$ is the permutation permuting the factors  $\Coop(l_i)$ to the left of the factors $\Coop(n_i^j)$.

The example of a co--operad that is pertinent to the three constructions is given by dualizing an operad $\O$.   In particular,  if $\O$ is an operad in (graded) Abelian groups then $\Coop(n)=\underline{Hom}(\O(n),\Z)$, that is the
group homomorphisms considered as a (graded) Abelian group, is the dual co--operad.
Note that if $\O(n)$ is just a set, then $Hom_{\Set}(\O(n),\Z)$ is an Abelian group and coincides with $\inthom_{\Ab}(\Z[\O(n)],\Z)$, where $\Z[\O(n)]$ is the free Abelian group on $\O(n)$.
In $\dgVect$ the dual co--operad is $\Coop(n)=\underline{Hom}(\O(n),k)$.
 This construction works in any closed monoidal category:   $\check\O(n)=\underline{Hom}(\O,\unit)$, where $\underline{Hom}$ is the internal Hom.

\begin{lem}
The dual of an operad, $\check\O(n)=\underline{Hom}(\O,\unit)$, in a closed monoidal category is a co--operad and this association is functorial. Likewise, if the objects in the monoidal category are graded, the graded dual of $\O$ is also functorially a co--operad.
\end{lem}
\begin{proof}
The association $\O(n) \to \Coop(n)$ is contravariant and all the diagrams to check are the dual diagrams. The functoriality is straightforward.
\end{proof}

\begin{rmk}
In  a linear category the maps $\check\gamma_{k,n_1\kdk n_k}$ can be $0$. If this is not the case, e.g.\ in $\Set$, one can weaken the conditions to state that the $\check\gamma_{k,n_1\kdk n_k}$ are partially defined functions, \eqref{coopcoasseq} holds whenever it is defined, and  the r.h.s.\ exists, if and only if the l.h.s.\ does.

\end{rmk}

\subsubsection{Examples based on free constructions on $\Set$ operads}

\label{freesetpar}
 If the operad $\{\O(n)\}$ is a $\Set$ operad then we can consider the free Abelian groups $\Z[\O(n)]\subset Hom(\O(n),\Z)$ or (free) vector spaces $k[\O(n)]\subset Hom(\O(n),k)$ in $\Ab$ or in $\kVect$.
 In this case, there is standard notation. For an element $\tau\in \O(n)$, we have the characteristic function $\delta_\tau:\O(n)\to \unit$ given by $\delta_{\tau}(\tau')=1$ if $\tau=\tau'$ and $0$ else.
 Then an element in the free Abelian group or the vectors space generated by $\O(n)$ is just a finite formal sum $\sum_{i\in I}n_i\delta_{\tau_i}$.
 By abuse of notation this is often written as $\sum_{i\in I}n_i \tau_i$.
 The dual on these spaces are then again formal sums of characteristic functions $\sum_{\alpha \in J}n_\alpha \tau^*_{\alpha}$, where the $\tau^*=ev_{\tau}$ are the evaluation maps at $\tau$.  This is course the known embedding $V\to (\check V)^{\vee}$ for vector spaces and the fact that the dual of a direct sum is a  product; see also \S\ref{assumptions}.

 \begin{rmk} We collect the following straightforward facts:
\begin{enumerate}
\renewcommand{\theenumi}{\roman{enumi}}
\item { If the $\O(n)$ are finite sets or finite free Abelian groups then the formal sums reduce to finite formal sums.} Dropping the superscript $*$, we again can identify elements of $\Coop(n)$ as finite formal linear combinations.

\item In the general case: {Finite formal  sums are a sub--co--operad if and only if $\O(n)$ is locally finite.}

\item The analogous statements hold for graded duals of free graded $\O(n)$. If the internal grading is preserved by $\gamma$ and the bi--graded pieces of $\O^{nc}$ are finite dimensional, then the graded dual has only finite sums. This is usually the starting point for the constructions mentioned in the introduction.
\end{enumerate}
\end{rmk}

\begin{ex}
In the case of leaf labeled rooted trees, the sets $\O(n)$ are finite precisely if one excludes vertices of valence $2$. Otherwise, the $\O(n)$ are infinite.
There is an internal grading by the number of vertices. This grading is respected by the operad and hence the co--operad structure. Adding the internal grading, the graded pieces $\O_r(n)$ with $r$ vertices are finite. The bi--graded pieces $\O_r^{nc}(n)$ are all finite dimensional. This is a consequence of the fact that $\O(0)$ has only positive internal degree ---there are no non--vertex trees without leaves. Using the vertex grading, one can also directly see that the operad is locally finite and the graded dual are only finite sums.
\end{ex}

\begin{rmk}
There is also the notion of a partial or colored operad. This means that there is a restriction on the gluing morphisms that the inputs can only be glued to like outputs, cf.\ e.g.\ \cite[\S\ref{P2-coloroppar}]{HopfPart2}.
The dual of such a partial structure is actually a co--operad. The key point is that in the dual there are no restrictions as one only decomposes what has previously been composed; see \cite{woods}.
\end{rmk}

\begin{ex}[The overlapping sequences (co)--operad]
\label{overlapex}
Consider a set $S$. Let $\Coop(n)=$ the free Abelian group (or vector space) on the set of finite sequences of length $n+1$ in $S$.  We define the co--operad structure as the decomposition of the set into overlapping ordered partitions:
\begin{multline}
\label{overlapeq}
\check\gamma_{k,n_1,\dots,n_k}(a_0;a_1 \kdk a_n-1; a_n)=\\
\sum_{0=i_0<i_1<\cdots<i_{k}=n} (a_0=a_{i_0};a_{i_1},a_{i_2}, \dots; a_{i_k}=a_n)\\\otimes (a_0=a_{i_0}; a_1 \kdk a_{i_1-1};a_{i_1}) \otimes (a_{i_1}; a_{i_1+1}\kdk a_{i_2-1};a_{i_2})\otimes \dots \\
\dots\otimes (a_{i_{k-1}};a_{i_{k-1}+1}\kdk a_{n-1};a_{i_k}=a_n)
\end{multline}
The co--operad structure is dual to the free extension of the partial (a.k.a.\ colored) $\Set$ operad structure, where  if $a_i=b_0$ and $a_{i+1}=b_{n+1}$:
\begin{multline}
(a_0;a_1 \kdk a_n; a_{n+1})\circ_i (b_0; b_1 \kdk, b_m ;b_{m+1})=\\(a_0;a_1 \kdk a_{i-1},a_i=b_0,b_1 \kdk b_{m}, b_{m+1}=a_{i+1}, a_{i+2} \kdk a_n;a_{n+1})
\end{multline}
This gives the connection of Goncharov's Hopf algebra, to Joyal duality, see Appendix \ref{Joyalapp}, see also \cite[\S\ref{P2-anglepar}]{HopfPart2}. It  is why we chose the notation using semi--colons as it gives the colors, and the fact that there are double base--points, the first and the last element.

Note that this co--operad has sub--co--operads given by fixing a particular sequence and considering all subsequences.

This is the example relevant for Goncharov's Hopf algebra and that of Baues when suitably shifted, see Remark \ref{shiftedrmk}. It will be  further discussed in \S\ref{overlappar} below. A more in depth consideration explaining the existence of this co--operad is given in \S\ref{simplicialpar}.

\end{ex}
\begin{rmk}
Note that the indices of the sequence are given by $\check\gamma(0,\dots,n)$ where $(0,\dots,n)$ is thought of as a sequence in $\N_0$. The iterations of the indices, then correspond to splitting or splicing intervals in $\N_0$. This makes contact with the operad structure on simplices and is the basis for the simplicial considerations section \S\ref{simplicialpar}. The partial operad structure becomes natural when considering Feynman categories where the partial operad structure corresponds to the partial structure of composition of morphisms in a category. The sequences can also be thought of as a decorations of angles of  corollas. This is shown in \cite[Figure \ref{P2-angle}]{HopfPart2}  and more precisely as decoration in the technical sense for Feynman categories \cite{decorated}, see \cite[\S\ref{P2-anglepar}]{HopfPart2}.
\end{rmk}

\subsubsection{Co--operadic co--units}

A morphism $\eps:\Coop(1)\to \unit$  is a left co--operadic co--unit if its extension  $\eps_1$ by $0$ on the $\Coop(n),n\neq 1$ satisfies\footnote{Here and in the following, we suppress the unit constraints in the monoidal category and tacitly identify $V\otimes \unit\simeq V\simeq \unit\otimes V$.}:
\begin{equation}
\label{lcouniteq}
\sum_k (\eps_1\otimes id^{\otimes k})\circ\check\gamma =id
\end{equation}
and a right co--operadic unit if
\begin{equation}
\label{rcouniteq}
\sum_k (id \otimes \eps_1^{\otimes k})\circ\check\gamma =id
\end{equation}
A co--operadic co--unit is a right and left co--operadic co--unit.
We will use $\eps_1:\Coop\to \unit$ for its extension by $0$ on
all $\Coop(n):n\neq 1$.

\begin{rmk}
Note, if there is only one tensor factor on the right, then the left factor has to be $\Coop(1)$ by definition. If $\eps_1$ would have support outside $\Coop(1)$,
the  $\check\gamma$ would have to vanish on the right side for all elements having  that left hand side, which is rather non--generic. This is why we assume $\eps_1$ vanishes outside $\Coop(1)$.
It is also the notion naturally dual to an operadic unit.
\end{rmk}

\begin{lem} The dual of a unital operad is a co--unital co--operad and this association is functorial.

\end{lem}
\begin{proof} A unit
 $u\in \O(1)$, can be thought of as a map of $u:\unit\to \O(1)$,
where $\unit$ is $\Z$ for Abelian groups or in general the unit object, e.g.\
$k$ for $\kVect$.
Its dual is then a morphism $\check u:=\Coop(1)\to \unit$.
Now,
$\check u:=\eps$ is a left/right co--operadic co--unit if it satisfies he equations \eqref{lcouniteq} and \eqref{rcouniteq}, but these are the diagrams dual to the equations
(\ref{uniteq}). Functoriality is straightforward.

 \end{proof}

\subsubsection{Morphisms}
Morphisms of co--operads $\check \O$ and $\check \P$ are given by a family of morphisms $f_n:\check \O(n)\to\check\P(n)$ that commute with the structure maps \begin{equation}\check \gamma^{\check\P}_{k;n_1,\dots,n_k}\circ f_n=
(f_k\otimes f_{n_1}\odo f_{n_k})\circ \check \gamma^{\check\O}_{k;n_1,\dots,n_k}\end{equation}

\subsubsection{Completeness and Finiteness Assumptions}
\label{assumptions}

\begin{as}[Completeness Assumption]
\label{completenessassumption}
If the monoidal category in which the co--operad lives is complete and certain limits (in particular, products) commute with taking tensors, then by summing over $k$ and the $k$--partitions of $n$, we
can define
 \begin{equation}
 \check\gamma_{n}:\Coop(n)\to \prod_{k}\prod_{(n_1,\dots,n_k):\sum_{i=1}^k n_i=n}
 \Coop(k)\otimes \Coop(n_1)\odo \Coop(n_k).
 \end{equation}
\end{as}

For the applications, we will use free algebras, which are based on  finite products of the $\Coop(n)$. In the  Abelian monoidal categories of (graded) vector spaces
$\kVect$, differential graded vector spaces $\dgVect$,    Abelian groups $\Ab$, or $\gAb$ graded Abelian groups,  these finite product are direct sums.
In order to write down the multiplication and the co--multiplication, we will need the maps $\check\gamma_n$ to be locally finite.

\begin{df}
We call $\{\Coop(n)\}$ {\em locally finite} if for any $a_n\in \Coop(n)$ there are only finitely many $k$ partitions of $n$ with
$\check\gamma_{k;n_1,\dots,n_k}(a_n)\neq 0$.
\end{df}

\begin{lem}
If there is no $\Coop(0)$, then $\{\Coop(n)\}$ is locally finite.
\end{lem}
\begin{proof} As in Lemma \ref{locfinOlem},
there are only finitely many partitions $(n_1\kdk n_k)$ of $n$ with $n_i\geq 1$.
\end{proof}
This implies that  in the limits and the limits reduce to  finite limits as there are only finitely many maps.

\begin{as}[Basic assumption]
\label{basicas}
We will assume that the co--operads are locally finite and  that the co--operads are in an Abelian category with bi--product $\oplus$ which are distributive with the tensor product.
\end{as}
Set $\Coop=\bigoplus_m\Coop(m)$, summing over the $m$ we obtain morphism
$
\check\gamma: \Coop\to  \Coop \otimes \bigoplus_{k}\Coop^{\otimes k}=\Coop\otimes \bar T\Coop
$.
The right hand side is actually multi--graded. Set

\begin{equation}
\label{nccoopeq}
\Coop^{nc}(n):=\bigoplus_{k,(n_1,\dots,n_k):\sum_i n_i=n}\Coop(n_1)\odo \Coop(n_k)
\end{equation}

\subsection{Bi--algebra structure on the non--connected dual of a non--$\Sigma$ operad}
\label{nonsigmasec}
\subsubsection{Non--connected co--operad}
 Just like for an operad, we can define a non--connected version for a co--operad. For trees this is again the transitions to forests.

Composition of tensors, or un--bracketing, gives a multiplication $\mu:\Coop^{nc}(n)\otimes \Coop^{nc}(m)\to \Coop(n+m)$.
Set
$\Coop=\bigoplus_n\Coop(n)$ and let $\B=\bar T\Coop=\bigoplus_{n\geq 1}\Coop^{nc}(n)$ be the free algebra on $\Coop$, then $\mu$ is just the free multiplication.
This is indeed again a co--operad, which we will use to generalize in \S\ref{gencooppar}.
\begin{prop}
Under the basic assumption, $\Coop^{nc}$ is a co--operads with respect to $\check\gamma^{nc}$ defined by
\begin{multline}
\label{freecompatexpleq}
\check\gamma^{nc}(a_n\otimes b_m)=\\ \sum_{\mbox{\tiny\begin{tabular}{l}$k,(n_1\kdk n_k):\sum_i n_i=n$\\$l, (m_1\kdk m_k):\sum_i m_i=m$\end{tabular}} }
\sum (a_{k}^{(0)}\otimes b_{l}^{(0)})\otimes a_{n_1}^{(1)}\odo a_{n_k}^{(k)}\otimes b_{m_1}^{(1)}\odo b_{m_l}^{(l)}
\end{multline}
using a multi--Sweedler notation for the $\check\gamma$ and indication the co--operadic degree by subscripts. More precisely,
 \begin{equation}
 \label{freecompateq}
\check \gamma^{nc}\circ \mu=(\mu\otimes \mu)\circ \pi\circ (\check\gamma\otimes \check\gamma)f
 \end{equation}
 where $\pi$ permutes the first factor of the second $\check\gamma^{nc}$ into the second place.

If $\Coop$ is the dual of a locally finite operad $\O$, then $\Coop^{nc}$ is the dual co--operad of the operad $\O^{nc}$.
\end{prop}
\begin{proof}
Using the equation iteratively, we see that the components of $\check\gamma^{nc}$ on $\Coop^{nc}(n)$ are $\Coop(n_1)\odo \Coop(n_k)\to (\bigoplus_{(l_1\cdc l_k)}\Coop(l_1)\odo\Coop(l_k))\otimes \bigoplus
\Coop(n_1^1)\odo \Coop(n^1_{l_1})\otimes \bigoplus \Coop(n^2_1)\odo \Coop(n^2_{l_2})\odo \bigoplus\Coop(n^k_1)
\odo \Coop (n^k_{l_k})\subset \Coop^{nc}(l)\otimes \Coop^{nc}(n_1)\odo \Coop^{nc}(n_l)$ where the sums are over partitions and $\sum_i l_i=l, \sum_k n_k^i=n_k$ and $\sum_k n_k=n$

The co--associativity for the co--operad structure $\check\gamma^{nc}$ follows readily from that of $\check\gamma$.
The last statement follows from the fact that \eqref{freecompatexpleq} is the dual of \eqref{opnceq}.
\end{proof}

\begin{ex}[Bar of an operad/algebra]
\label{cobarex}
A natural way to obtain a co--operad from an operad it given  by the operadic bar transform, see e.g.\ \cite{MSS}. One can then consider the free algebra on this co--operad. This is much bigger than just doing the tensor algebra on the dual of an operad. A reasonably small version is provided by the bar construction of an algebra, which is a co--algebra, that can also be thought of as a co--operad. This parallel to the fact that an algebra is an operad with only $\O(1)$; see also \cite[\S\ref{P2-coopex}]{HopfPart2} and \cite[\S\ref{P2-cobarpar}]{HopfPart2}.
\end{ex}

\subsubsection{From non-\SSigma{} co--operads to bi--algebras}

There is another way to interpret the co--operad structure on $\Coop^{nc}$ in which $\check\gamma^{nc}$ becomes a co--multiplication.
Let $\Coop=\bigoplus_{n\geq 1} \Coop(n)$, $\B=\bar T\Coop$ and $\B'=T\Coop=\unit \oplus \B$ be the free and free unital associative algebras on $\Coop$, then we can decompose $\B=\bigoplus_n \B(n)$:
 \begin{equation}
  \B(n)=\bigoplus_{k}\bigoplus_{(n_1,\dots, n_k):\sum_i n_i=n}\Coop(n_1)\odo \Coop(n_k)=\Coop^{nc}(n)
 \end{equation}
 The free multiplication is composition of tensors and on components is given by $\mu_{n,m}:\B(n)\otimes \B(m)\to \B(n+m)$.

For $\B'$ we let $\B(0)=\unit$ and tacitly use the unit constraints $u_R:X\otimes \unit \stackrel{\sim}{\to} X$ and $u_L: \unit \otimes X \stackrel{\sim}{\to} \unit$
to shorten any tensor containing a unit factor, and hence make $\B'$ unital. This defines the unit components of $\mu$: $\mu_{0,n}:\B(0)\otimes \B(n)\to \B(n)$ and $\mu_{n,0}:\B(n)\otimes \B(0)\to \B(n)$.

In this notation, the maps $\check\gamma_{k,n_1\kdk n_k}$ can be seen as maps: $\Coop(n)\to  \Coop(k)\otimes \Coop^{nc}(n)=\Coop(k)\otimes \B(n)$.
Summing the $\check\gamma_{k;n_1,\dots,n_k}$ over all non--empty $k$ partitions of $n$, we obtain a map
 \begin{equation}
\check\gamma_{k,n}: \Coop(n)\to\Coop(k)\otimes \B(k)
\end{equation}

The extension of $\check\gamma$ to $\Coop^{nc}$ using \eqref{freecompateq} gives maps
\begin{equation}
\label{freeDeltakneq}
\Delta_{k,n}:\B(n)\to \B(k)\otimes \B(n)
\end{equation}

If we sum the $\Delta_{k,n}$ over $k$ and $n$ we obtain maps $\Delta_n=\sum_k \Delta_{k,n}$ which define $\Delta=\sum_n\Delta_n$
\begin{equation}
\Delta:\B\to \B\otimes \B
\end{equation}

In $\B'$ we let $\B'(0)=\unit$ and $\Delta$ extends  to $\B'$ via $\Delta(1)=1\otimes 1$, for $1=id_\unit\in Hom(\unit,\unit)=\unit$.
Thus, on $\B'(0,0)$, we have the additional component $\Delta_{0,0}:\B'(0,0)\to \B'(0)\otimes \B'(0)$.
This also extends the co--operadic structure of $\Coop^{nc}$ to $\B'=\unit\oplus\Coop^{nc}$.

\subsubsection{Grading}
 We  have the grading by co-operadic degree $\deg$ with degree of $\Coop(n)$ being $n$.
  In this grading, the graded dual of $\O$ is $\Coop$:
$(\bigoplus_n \O(n))^{\vee}$ $=\bigoplus_n \Coop(n)$.
On $T\Coop$, the natural degrees are additive degrees, i.e.\ for $a\in \Coop^{nc}(n)$, $\deg(a)=\sum n_i= n$. We set the degree of elements in $\unit$ to be zero.
This coincides with the co--operadic grading for $\Coop^{nc}$.
We also have the grading in $\B$ by tensor length $\length$.
This gives a double grading $B=\bigoplus_{n,p}\B(n,p)$ where
\begin{equation}
\B(n,p)=\Coop^{nc}(n,p)=\bigoplus_{(n_1,\dots,n_p), \sum n_i=n}\Coop(n_1)\odo\Coop(n_p)
\end{equation}
In $\B'$ the unit is defined to have length $0$: $\unit=\B'(0,0)$.
Using the bi--grading the components of $\mu$ maps are:
\begin{equation}
\label{mupneq}
\mu:\B'(n_1,p_1)\otimes \B'(n_1,p_2)\to \B'(n_1+n_2,p_1+p_2)
\end{equation}
and the non--vanishing components of $\Delta$ are:
\begin{equation}
\label{deltapkneq}
\Delta_{k,n}:\B(n,p)\to \B(k,p)\otimes \B(n,k) \qquad 1\leq p\leq k\leq n
\end{equation}
the restriction comes from the fact that  $\Delta$ is an algebra morphism, and hence it does not change the length of the first factor. By definition the degree of the first factor is the length of the second factor and hence the $\Delta_{k,n}$ are the only non--zero components, so that $\Delta_k=\sum_{n\geq k}\Delta_{k,n}$ and $\Delta=\sum_{k,n:k\leq n}\Delta_{k,n}$.
On $\B'(0,0)$, we have the additional component $\Delta_{0,0}:\B'(0,0)\to \B'(0,0)\otimes \B'(0,0)$.

We define the weight  grading on $\B'$ to be given by $\weight=\deg-\length$. An additional internal grading by considering operads in $\dgVect$ or $g\Ab$ can be incorporated by
adding the external and internal gradings; as usual.

\begin{prop}
  $\B$ is a bi--algebra and $\B'$ is a unital bi--algebra both graded with respect to $\wt$. This association is functorial.
\end{prop}
\begin{proof}
We have to check that $\Delta$ and $\mu$ satisfy the bi--algebra equation that is,
if $c$ is decomposable $c=\mu(a\otimes b)=a\otimes b$, then it must satisfy
\begin{equation}
\label{bialgeq}
\Delta\circ\mu (a\otimes b)= (\mu\otimes\mu) \circ \pi_{2,3}\circ(\Delta(a)\otimes \Delta(b))
\end{equation}
where $\pi_{2,3}$ permutes the second and third factor in $\B'\otimes \B'\otimes \B'\otimes \B'$.
But, this equation is \eqref{freecompateq} when interpreted in terms of $\B$.
 Co--associativity follows from \eqref{coopcoasseq}. The extension to $\B'$ follows readily.

 For the grading, looking at \eqref{mupneq}, we obtain $n_1+n_2-p_1-p_2$ as the degree for both sides of the multiplication. For the co--product, we see that on both sides of \eqref{deltapkneq} the degree is $n-p=k-p+n-k$.
 The functoriality is clear.
\end{proof}

\begin{rmk}
To obtain the bi--algebra, we could have alternatively just defined $\B=\bar T\Coop^{nc}$,  as the free algebra, defined $\rho=\check\gamma:\Coop\to \Coop\otimes \B$ and then extended $\rho$  to all of $\B$ as $\Delta:\B\to \B \otimes\B$ via \eqref{bialgeq}, without defining the co--operad structure on $\Coop^{nc}$.
Note that $\rho$ makes $\Coop$ into a co--module over $\B$ and $\B'$. The way, it is set up now ---co--operad with multiplication--- will allow us to generalize the structure in \S\ref{gencooppar}.
\end{rmk}

\begin{rmk}[Shifted  version]
\label{shiftedrmk}
One  obtains the weight naturally using the suspensions $\Coop(n)[1]$ of the $\Coop(n)$ in  \eqref{nccoopeq}. The suspended operadic degree is the weight.  This is analogous to the conventions of signs in the graded pre--Lie structure and in general to using odd operads   \cite{KWZ}.

This is also very similar to the co--bar transform $\Omega C$ for a co--algebra $C$, but
without the differential. The differential is instead replaced by the co--operad structure, or the co--product.
A similar situation is what happens in Baues' construction. Here one can think of a co-bar transform of an algebra of simplicial objects, where the simplicial structure gives the (co)operad structure, see \S\ref{simplicialpar}.

\end{rmk}

\begin{rmk}
This shifted version only a small part of the operadic co--bar construction which would have components for any tree and the ones in the shifted construction correspond in a precise sense only to level trees that are of height 2. The two constructions are related by enrichment of Feynman categories and $B_+$ operators. We will not go in to full details here and refer to \cite[\S3,\S4]{feynman} and future analysis.
\end{rmk}

\subsubsection{Co--module structure}
Dual to \eqref{assocdiagrameq} we can write the co--as\-so\-cia\-tivity of $\check\gamma$ as

\begin{equation}
\xymatrix{
\Coop\otimes \Coop^{nc}\otimes \Coop^{nc}& \ar[l]_-{id\otimes
\check {\gamma}^{nc}} \Coop\otimes \Coop^{nc}\\
\Coop\otimes \Coop^{nc}\ar[u]^{\check{\gamma}\otimes id}&\Coop\ar[l]_{\check\gamma}\ar[u]_{\check\gamma}
}
\end{equation}
From this, we obtain the dual to \S\ref{opmonoidprop}.
\begin{prop}
\label{coopcomonoidprop}
Under the  basic assumption, the co--as\-sociativity of $\check\gamma$ implies that
$\Coop^{nc}$ is a co--associative co--monoid induced by $\check\gamma^{nc}$ and $\Coop$ is a left co--module over $\Coop^{nc}$ via $\check\gamma$. \qed
\end{prop}

\subsubsection{Finiteness, $\Coop(0)$ and co--modules}
If $\Coop(0)$ is empty or zero, then $\{\Coop(n)\}$ is locally finite. If there is an internal grading for the $\Coop(n)$ that is preserved under $\check\gamma$ and positive on $\Coop(0)$ then again,
$\{\Coop(n)\}$ is locally finite. In these cases, there is no problem in considering $\{\Coop(n)\}, n\geq 0$.
Generally, $\{\Coop(n)\},\geq 1$ is a sub--co--operad of  $\{\Coop(n)\}, n\geq 0$.

\begin{rmk}
One may in this case also consider an $\Coop(0)$ of rooted trees without leaves, but not without vertices.
The leaf labeled trees are then replaced by rooted trees in general.
Algebras over this are algebras over the operad together with a module. Dually, this yields the co--algebra over the co--operad structure.
\end{rmk}

\subsubsection{Operadic units, co--operadic co--units and bi--algebraic co--units}

If $\eps$ is a co--unit for the co--operad $\{\Coop(n)\}$, $\eps:\Coop(1)\to \unit$,  we extend it by $0$ to $\eps_1:\Coop\to\unit$. We further extend $\eps_1$ to the co--operad $\Coop^{nc}$   by  $\eps_{tot}=\bigoplus_{n\geq 1} \eps_1^{\otimes n}$
and to $\B'$ by $\eps_1^{\otimes 0}=id:\B(0)\to \unit$ that is $\eps_{tot}=\bigoplus_{n\geq 0} \eps_1^{\otimes n}:\B'\to \unit$.

\begin{prop}
\label{freeco--unitprop}
The map $\eps_{tot}: \O^{nc}\to \unit$ is a co--operadic co--unit for $\check\gamma$.
As a map $\eps_{tot}:\B'\to \unit$
is a bi--algebraic
co-unit for $\B'$ and its restriction to $\B$ is a bi--algebraic unit for $\B$.
Vice--versa, for $\B'$ to have a co--unit, $\Coop$ has to be co--unital  with co--operadic co--unit $\eps$  with
$\eps_1=\eps_{tot}|_{\Coop}$ where $\Coop\subset \B'$ as $\Coop=\bigoplus_{n\geq 1}\B(n,1)$.

Moreover, the associations $\{\Coop(n)\}\to \B$ and $\{\Coop(n)\}\to \B'$ of a co--unital respectively unital and co--unital graded bi--algebra to a co-unital co--operad are functorial.
\end{prop}

\begin{proof}
On the indecomposables $\O^{nc}$ of $\B$ the fact that $\eps_{tot}$ is a co--unit is just the fact that $\eps_1$ is an operadic co--unit, i.e.\ satisfies \eqref{lcouniteq} and \eqref{rcouniteq}.
For decomposables, we can use induction by using the bi--algebra equation \eqref{bialgeq} and the fact that $\eps_{tot}\circ \mu=\mu_{\unit} \circ ( \eps_{tot}\otimes \eps_{tot})$ where
$\mu_{\unit}:\unit \otimes \unit \to \unit$ is given by the unit constraints. This is also the compatibility of the multiplication and the co--unit. The compatibility of the unit and the co--multiplication says that $1$ is group--like. Finally, by definition $\eps_{tot}(1)=\eps_1^{\otimes 0}(1)=1$.

The fact that this is  a necessary condition is Proposition \ref{freeunitprop}.
For the functoriality: Any map $\P\to \O$ induces a dual map $\Coop\to \check{\P}$, which in turn induces a morphism on the free algebras.
 It is straightforward to check that this map is also a morphism of
bi--algebras preserving grading and the unit. In the case of a co--unital co-- operad case the co-unit is preserved by the morphisms $\eps_{\Coop}=\eps_{\check P}\circ f$, by definition, and hence also is the bi--algebra co--unit.
\end{proof}

\begin{ex}
In the example of leaf labeled rooted forests. A unit for the operad is given by the ``degenerate'' tree or leaf $|$, where gluing on a leaf leaves the tree invariant. This means that $\O(1)=\unit\oplus \bar\O(1)$, where $\unit$ is spanned by $|$ and $\bar\O(1)$ has generators $\t$ which are ``ladders'', viz.\ all vertices are bi--valent.
The dual co--operadic co--unit is the characteristic function $\eps_1=\vardel_|$. Indeed $\check\gamma(\t)$ is the sum over all cuts. Applying $\sum_{k\geq 1}\eps_1\otimes id^{\otimes k}$, we see that only the cut which cuts the root half--edge, that is the term $|\otimes \tau$ evaluates to a non--zero value and $\sum_{k\geq 1}(\eps_1\otimes id^k)\circ\check\gamma(\t)=\tau$. As for the right co--unit property, we see that if $\t\in \O(n)$
then the only terms that survive $id\otimes \eps_{tot}$ are the ones with only factors of $|$ on the right, that is $\t'\otimes | \odo |$. There is precisely one such term with $n$ occurrences of $|$ on the right, which comes from the cut through all the leaves. Indeed, $(\id\otimes\eps_{tot})\circ \check\gamma(\t)=\t$.

When looking at decomposables in $\B$ or $\B'$, these are forests with more than on tree.
The only terms surviving $\eps_{tot}\otimes id$ are the ones with only factors of $|$ on the right, which is just one term corresponding to the cut cutting all root half--edges. Similarly to be non--zero under $id\otimes \eps_{tot}$ the right terms must all be $|$, and again there is only one cut, namely the cut that cuts all leaves of all the trees in the forest. So, indeed we get $(\eps_{tot}\otimes id)\circ\Delta=id=(id\otimes \eps_{tot})\circ \Delta$.
\end{ex}

Summing up the results:

\begin{thm}
\label{bialgthm}
Under the basic assumption,
given a co--operad $\{\Coop(n)\}$  $\B$ is a graded  bi--algebra and $\B'$ is a graded unital bi--algebra. If on only if the underlying co--operad $\{\Coop(n)\}$ is a co--unital co--operad, the bi--algebras $\B$ and $\B'$ also have a co--unit.
 The association of (co--unital) operads to graded (unital), (co--unital) bi-algebras is a covariant functor.
 The association of (unital) operads to graded (unital), (co--unital) bi-algebras is a contravariant functor.
 \end{thm}

\begin{rmk}\mbox{}
This is an example which comes from  the enriched Feynman categories $\Surj_{\O}$, see \cite{feynman} and \cite[\S\ref{P2-feynmanpar}, \S\ref{P2-enrichedpar}]{HopfPart2}.
\end{rmk}

\begin{ex}[Morphisms of operads and co--operads for types of trees]
Let $\O^{ppl}$ be the operad of  planar planted leaf--labeled tress, $\O^{ppl}_3$ the sub--operad of  planar planted trivalent trees and $\P$ the operad of planar corollas.
\begin{enumerate}
\item The inclusion $\O^{ppl}_3\to \O^{ppl}$ gives a morphism $\B_{\O^{ppl}}\to \B_{\O^{ppl}_3}$. This is the map that maps all $\delta_\tau$ for non--trivalent $\tau$ to $0$.
\item There is also an inclusion of $\Coop^{ppl}_3\to\Coop^{ppl}$ which is defined by the inclusion of the generating set. This yields a morphism $\B_{\O^{ppl}_3}\to \B_{\O^{ppl}}$ which is the inclusion as a sub--bi--algebra.
\item There is a morphism of operads $\con:\O^{ppl}\to \P$ given by contracting all internal edges of a tree $\t$. This restricts to $\O^{ppl}_3\to \P$. These morphisms give rise to maps $\B_\P\to \B_{\O^{ppl}}$ and
    $\B_\P\to \B_{\O^{ppl}_3}$. These morphisms send  $\delta_{c}$ to $\sum_{\tau\in\con^{-1}(c)}\delta_\tau$, where the sum is restricted to only the trivalent pre--images for $\B_{\O^{ppl}_3}$. This morphism and its angle decoration are considered in \cite{Gont}, see also \cite[\S\ref{P2-anglepar}]{HopfPart2}. Combinatorially this corresponds to associating to each multiplication $(a_1\cdc a_n)$ all possible bracketings. In this point of view it can be seen as the boundary operation for the associahedra and the co--operad map is the boundary map. The fact that one gets a bi--algebra morphism then states that the operad map on associahedra is a dg--map.
\end{enumerate}
\end{ex}

\subsection{Hopf algebra structure for co--connected co--operads}
\label{freehopfsec}
Under certain conditions a quotient of the bi--algebra $\B'$ is a Hopf algebra. These conditions guarantee connectedness and co--nilpotence of the quotient.
When considering Hopf algebras, we will {\em always} make the following assumption:
\begin{as}
\label{Hopfassump}
The tensor structure and taking kernels commute in both variable. Under this assumption the notions of omnipotent and connected are equivalent.
\end{as}
For example, this is the case if we are working in $\kVect$.

\subsubsection{(Co)--connected (co)--operads}
\label{hopfcondpar}
For a co--unital co--operad, we will say that the co--unit $\eps$ is split if $\Coop(1)=\unit \oplus ker(\eps)= \unit \oplus \Coop(1)^{red}$.
This is automatic if we are in the category of vector spaces, or the $\O(n)$ are free Abelian groups, e.g.\ if they come from an underlying $\Set$ operad.
In the case that $\eps$ is split, let $|$ be the generator of $\unit$ with $\eps(|)=1$.

For an operad, we will say that a unit $u$  is {\em split} if  $\O(1)=\unit\oplus \bar\O(1)$, where $u=1\in\unit$.
If $u$ is split, dualizing $\O(1)=\unit\oplus \bar \O(1)$ to $\Coop(1)=(\O(1))^{\vee}=\unit\oplus \Coop(1)^{red}$ where $|=\delta_u=1\in \unit\subset \Coop(1)$ and $\Coop(1)^{red}=(\bar{\O}(1))^{\vee}=ker(\eps_1)$.
Whence the dual of a split unital operad is a split co--unital co--operad.

\begin{rmk}
 In an operad $\O(1)$ forms an algebra via the restriction of $\gamma$: $\gamma_{1;1}:\O(1)\otimes \O(1)\to \O(1)$. Dually $\Coop(1)$ forms a co--algebra via $\Delta:=\check\gamma_{1;1}:\Coop(1)\to \Coop(1)\otimes \Coop(1)$. $\Delta$ is the restriction of the co--product on $\B$ to $\Coop(1)$.
 If the co--operad has a split co--unit, this co-algebra is pointed by the element $|$.
\end{rmk}
\begin{df}
A co--unital operad  is co--connected, if

 \begin{enumerate}
 \item The co--unit is split.
  \item The element $|$ is group--like: $\Delta(|)=\check\gamma(|)=|\otimes |$
\item  $(\Coop(1),|,\eps)$ is connected as a pointed co-algebra in the sense of Quillen \cite{Quillen} (see Appendix B).
\end{enumerate}

\end{df}
If a unital operad is  reduced  ---$\O(1)=\unit$--- it is  automatically split and $|$ is group--like.
Likewise a co--unital co--operad is reduced if $\Coop(1)=\unit$. It then automatically satisfies (1) and (2).

As the dual of a split unital operad is a split co--unital co--operad, we can illustrate the conditions (2) and (3) in a practical fashion. Consider $\O(1)$ as  an algebra.:

\begin{lem}
The dual split co--unital co--operad of a split  unital operad is satisfies (2)
 if and only if $O(1)$ does not contain any left or right invertible elements except for multiples of the identity. It satisfies (3)
if and only if
\begin{itemize}
\item [(3')] any element
$a\in \O(1)$  the decompositions $a=\prod_{i\in I}a_i$ with all $\eps(a_i)=0$, have bounded length, i.e.\ $|I|$ is bounded.
\end{itemize}
\end{lem}
\begin{proof}
Recall that the co--product in $\Coop(1)$ is dual to multiplication in $\O(1)$, that is, the co--product is decomposition.
Let $u$ be the unit, then $|=\delta_u$.

Now, $\Delta(|)(a\otimes b)\neq 0$ means that $\gamma(a,b)=u$ and hence, $a$ is a left inverse to $b$ and $b$ is a right inverse to $a$. If $|$ is group like, then we need that $a,b\in \unit\subset\unit\oplus \bar\O(1)$, which is the first statement.
Likewise, since, the co--product is decomposition, being co--nilpotent is  equivalent to the given finiteness condition.
\end{proof}

An obstruction to being co--connected are group like elements in $\Coop(1)^{red}$. Such a group like element will be dual to an idempotent.

\begin{cor}
If $\O(1)$ contains any isomorphisms or idempotents except for  multiples of the unit, then $\Coop$ is not co--connected. More precisely, if $\O(1)$ splits as $\unit\oplus \bar\O(1)$, then $\bar \O(1)$ may not contain any invertible elements or any idempotent elements.
\end{cor}
\begin{proof}
Indeed, if $a\in \bar\O(1)$ is  an isomorphism it has factorizations of any unbounded length: $a=a(a^{-1}a)^n$ for any $n$ and $\eps(a)=\eps(a^{-1})=0$. Likewise,
if $p\in\bar\O(1)$ is an idempotent then $p=p^n$, again for any $n$ yielding infinitely many factorizations.
\end{proof}
\begin{ex}
\label{moeex}\mbox{}

\begin{enumerate}
    \item
If the unital operad $\O$ is reduced its dual is also reduced.
 This is the case for the surjection and the simplex operads.
 \item
More generally, if  for a split co--unital co--operad,  $\Coop(1)^{red}$ is free of finite rank
as a co--monoid, then $\Coop$ is co--connected. This is the case for the dual co--operad of an operad whose $\O(1)$ is a free unital algebra of finite rank. An example are
planar planted trees, where $\Coop(1)^{red}$
is free of rank $1$ with the generator being the rooted corolla with one tail.
As previously, the generator corresponding to the dual of the identity can be depicted as the degenerate  ``no vertex'' corolla with one input and output $|$ and the other generator as $\dottree$.

This is  linked to the considerations of \cite{MoerdijkKreimer} in the rank $1$ case and those of higher rank to \cite{moevan},  see also \cite[\S\ref{P2-Moealg}]{HopfPart2}, where the generators can the thought of as $\dottree \, c$, where $c$ is a color index.
\item Assume $\O(1)$ is split unital and (2) holds. If $\O(1)$ as an algebra is an algebra presented by homogenous relations, then $\O$ is co--connected.
This follows, since homogenous relations do not change the length of  a decomposition.
\end{enumerate}

\end{ex}
For a  split co---unital co--operad,
let $\I$ be the two-sided  ideal of $\B'$ spanned by $1-|$.
Set
\begin{equation}
\H:=\B'/\I
\end{equation}
Notice that in $\H$ we have that $|^{k}\equiv 1 \mod \I$ for all $k$.

\begin{prop}
\label{Iprop}
If  $\{\Coop(n)\}$ has a split co--unit and $|$ is group--like, then $\I$ is a co--ideal of $\B'$ and hence $\H$ is a co--algebra. The unit $\eta$ descends to a unit $\bar \eta:\unit\to \H$
and the co--unit $\eps_{tot}$ factors as $\bar\eps$  to make $\H$ into a bi--algebra.
\end{prop}
\begin{proof}
$\Delta(1-|)=1\otimes 1- |\otimes |=(1-|)\otimes | + 1\otimes (1-|)  \in \I\otimes\B +\B\otimes \I$ and $\eps_{tot}(1-|)=1-1=0$.
\end{proof}

\begin{thm}
\label{hopfthm}
If  $\{\O(n)\}$ is co--connected then $\H$ is co--nilpotent and hence admits a unique structure of  Hopf algebra.
\end{thm}

\begin{proof} Let $\pi=id-\bar\eps\circ\bar \eta$ be the projection $\H=\unit \oplus \bar \H\to \bar \H$ to the augmentation ideal. We have to show that each element lies in the kernel of some
$\pi^{\otimes m}\circ \Delta^m$. For $\unit$ this is clear, for the image of $\Coop(1)=\Coop^{nc}(1)$ this follows from the assumptions, from the Lemma above and the identification of $1$ and $|$ in the quotient. Now we proceed by induction on $n$, namely,
for $a\in \B(n)$, we have that $\Delta(a)\in \bigoplus_{k,n} \B(k)\otimes \B(n)$.
Since the co--product is co--associative, we see that all summands with $k<n$ are taken care of by the induction assumption. This leaves the summands with $k=n$. Then the right hand side of the tensor product is the product of elements which are all in $\B(1)=\Coop(1)$. Since $\Delta$ is compatible with the multiplication, we are done by the assumption on $\Coop(1)$ using co--associativity.
\end{proof}

\begin{ex}[Hopf algebra of leaf labeled planar planted trees]
In the example of leaf labeled planar planted trees the Hopf algebra is one of the versions found in \cite{foissyCR1,foissyCR2}. It simply means that all occurrences of $|$ are replaced by $1$ that is just eliminated unless all the factors are $|$; in which case it is replaced by $1$.
We obtain $\Delta_\H(\t)=1\otimes \t + \t \otimes 1 +\sum_{\t_0\subset \t}\t_0\otimes \t \setminus \t_0$ as in \eqref{ckintroeq}, where now all the cut off leaves and cut off root half edges are ignored, or better set to $1$.
\end{ex}

\begin{ex}[The operad of order preserving surjections, aka.\ planar planted corollas]
\label{corrolladeltaex}
In the example of ordered surjections or planar planted corollas, which is isomorphic to order preserving surjections.
Let $c_n$ be the planar planted corolla with $n$ leaves and let $\pi_n:\un\ta \uone$ the unique surjection. The isomorphism is given by $\pi_n\leftrightarrow c_n$.
The non--$\Sigma$ pseudo--operad structure is $c_n\circ_i c_m=c_{n+n-1}$. The operad structure is given by $\gamma(c_k;c_{n_1},\dots, c_{n_k})=c_n$ with $n=\sum_i n_i$.
As surjections this is the composition of maps
\begin{equation}
\gamma(\pi_k;\pi_{n_1}\kdk \pi_{n_k})=\pi_{n_1}\amalg \dots \amalg \pi_{n_k}\circ \pi_k:\un=\underline{n_1}\amalg\dots \amalg \underline{n_k}\ta \underline{k}\ta \uone
\end{equation}
Keeping the notation $c_n$ we get
\begin{equation}
\Delta(c_nn)=\sum_{k,n_1\kdk n_k:\sum n_i=n, n_i\geq1} c_k\otimes c_{n_1}\odo c_{n_k}
\end{equation}
In $\H$:
 \begin{multline}
 \Delta_{\H}(c_n)=1\otimes c_n + c_n \otimes 1 + \\
\sum_{1<k< n} \sum_{(n_1,\dots, n_j)\sum n_i +k-j =n, n_i\geq 2} {k\choose j} c_k\otimes c_{n_1}\odo c_{n_j}
 \end{multline}
 Here the $c_1$ have made into units in $1\in \unit$ and hence the sum is over partitions not containing ones, but there are now multiplicities  according to the number of spots where they were originally inserted. In particular,
 \begin{eqnarray}
&& \Delta_\B(c_3)=1\otimes c_3+ c_3\otimes c_1 \otimes c_1\otimes c_1+ c_2\otimes c_1\otimes c_2+c_2\otimes c_2\otimes c_1\nn\\
&& \Delta_\H(c_3)=1\otimes c_3+c_3\otimes 1 +2 c_2\otimes c_2
 \end{eqnarray}
 \end{ex}

\subsection{The Hopf algebra as a deformation}
\label{defsec}
Rather than taking the approach above, we can produce the Hopf algebra in  two separate steps. Without adding a unit,
we will first mod out by the two-sided ideal $\C$ of $\B$ generated by $|a-a|$. This forces $|$ to lie in the center. We denote  the result  by
$\H_q:=\B/\C$, where the image of $|$ under this quotient is denoted by $q$. This allows us to view $q$ as a deformation parameter and view  $\H$ as the classical limit $q\to 1$ of $\H_q$.

\begin{prop}
\label{Cprop}
If $\{\Coop(n)\}$ is split co--unital and $|$ is group--like, then
$\C$ is a co--ideal and hence $\H_q$  is a co--unital bi--algebra.

\end{prop}
\begin{proof} Using Sweedler notation:
\begin{multline*}
\Delta(|a-a|)=
|a^{(1)}\otimes |a^{(2)}-a^{(1)}|\otimes a^{(2)}|\\
=(|a^{(1)}-a^{(1)}|)\otimes |a^{(2)}+a^{(1)}|\otimes ( |a^{(2)}-a^{(2)}|)
\in \C\otimes \B+\B\otimes \C
\end{multline*}

Furthermore $\eps(|a-a|)=\eps(a)-\eps(a)=0$.
\end{proof}

In the case of a split co--unital co--operad, we  split  $\Coop=\unit\oplus\Coop^{red}$ where $\Coop^{red}=ker(\eps_1)$ that is $\Coop(1)^{red}=ker(\eps)\subset \Coop(1)$ and
$\Coop(n)^{red}=\Coop(n)$.
We set
\begin{equation}
\label{coopncredeq}
\Coop^{nc, red}(n)=\bar T \Coop^{red}(n)=\bigoplus_{k\geq 1}\bigoplus_{(n_1,\dots n_k):\sum n_i=n}\Coop^{red}(n_1)\odo \Coop^{red}(n_k)
\end{equation}
Notice that the image of $|^n$ is $q^n$ and if we give $q$ the degree and length $1$, then the grading by operadic degree passes to the quotient
as well as does the length grading and any combination of them. In particular, $\H_q$
decomposes as $\H_q=\bigoplus_{d}\H_q(d)$ according to the length grading. Furthermore, $q$ is group-like and $\eps(q)=1$.

\begin{prop}
\label{Hqdecompprop}
For a split unital $\O$ with a group--like $|$,
 \begin{equation}
 \label{qeq}
 \H_q(d)\simeq \bigoplus_{n\leq d} q^{d-n}{\Coop}^{red,nc}(n) \text { and } \H_q\simeq \bar T\Coop^{red}[q]
 \end{equation}
\end{prop}
\begin{proof}
In $\H_q$ one can move all the images of $|$, that is factors $q$ to the left. This leaves terms of the form $q^k a$ with $a$ in the image of $\Coop^{red,nc}$. This yields a unique standard form for any element in $\H_q$ and establishes the first isomorphism.
The second isomorphism is a reformulation using \eqref{coopncredeq}.
\end{proof}

\begin{cor}
\label{defcor}
$\H_q$ is a deformation of $\H$ given by $q\to 1$.
\end{cor}
\begin{proof}
Setting $q=1$ corresponds to taking the quotient $\H_q'=T\Coop^{red}[q]=\unit\oplus \H_q$ by the bi--algebra ideal $\I'$ generated by $1-q$.  Including $\C$ into $\B'$: $\B'/\C=\H_q'$. Thus the double quotient satisfies $\H_q'/I'=(\B'/\C)/\I'=\B'/\I=\H$.
\end{proof}

\subsection{The infinitesimal structure}
\label{freeinfsec}
The infinitesimal version corresponds to dualizing the pseudo--operad structure, notably $\circ$.
In order to obtain  the infinitesimal structure, we consider the filtration in terms of factors of $|$.
Using the construction of the double quotient, that is first identifying $|$ with $q$,  gives context to the name infinitesimal.

\begin{as}
In this subsection, for simplicity, we will assume that $\Coop(0)$ is empty or $0$. The arguments also work if  $\Coop$ is locally finite. For the general case, see \S\ref{coactionpar}.
\end{as}

\subsubsection{Pseudo co--operads and the co--pre--Lie Poisson}
\label{pseudocooperadpar}
As before, if we denote by $\eps_1:\Coop\to \unit$ the dual of $u:\unit\to \O(1)$ extended  to all of $\Coop$ by first projecting to $\Coop(1)$ --- in the linear case this is just the extension by $0$.
The dual of the $\circ_i$ expressed as in \eqref{unitcircieq} becomes a morphism $\check\circ_i:\Coop(n)\to \bigoplus_{k=1}^{n-1} \Coop(k) \otimes \unit^{\otimes i-1}\otimes \Coop(n-k) \otimes \unit^{\otimes k-i}$ which for $a\in \Coop(n)$, $1\leq i\leq n$ is defined by
\begin{equation}
\label{freecocirceq}
\check\circ_i(a)=\sum_{k=1}^{n-1}(id \otimes \eps_1^{\otimes i-1} \otimes id\otimes \eps_1^{\otimes k-i})
(\check\gamma_{k;1,\dots 1,n-k,1 \cdc 1}(a))
\end{equation}
with $id$ in the 1st and $i+1$-st
 place.
Using the unit constraints for the monoidal category to eliminate tensor factors of $\unit$ resulting from the maps $\eps_1$, we get maps
\begin{equation}
\delta_i:\Coop(n)\to \bigoplus_{k=1}^{n-1} \Coop(k)\otimes \Coop(n-k+1) \text{ for $1\leq i\leq n$}
\end{equation}
These maps and their compatibilities constitute a (non--$\Sigma$ pseudo--co--operad structure.
One can add symmetric group actions as well and postulate equivariance, as before, to obtain the notion of a pseudo co--operad.

One can reconstruct the $\check\gamma$  from the $\check \circ_i$ and the $\delta_i$. This fact is used with great skill in \cite{brown,brownICM,browndecomp}.
Summing over all the $\delta_i$ we get a map
\begin{equation}
\label{deltaeq}
\delta_c:\Coop(n)\to \bigoplus_{k=1}^{n} \Coop(k)\otimes \Coop(n-k+1)
\end{equation}

The subscript $c$ stands for connected.
As $\check\circ$ is dual to the pre--Lie product $\delta_c$ it is co--pre--Lie.

 We will extend $\delta_c$ to $\B$ by
\begin{equation}
\label{freeinfalgeq}
\delta\circ\mu\;\;=\;\;(\mu\otimes id)\circ \pi_{23}\circ(\delta \otimes \id)+(\mu\otimes \id)\circ(\id\otimes \delta)
\end{equation}
where $\pi_{2,3}$ switches the second and third factor, i.e.\
$\delta(ab)=\sum a^{(1)}b\otimes a^{(2)}+\sum ab^{(1)} \otimes b^{(2)}$,
which can be thought of as a Poisson condition for the co--pre--Lie--product $\delta$ that is obtained by dualizing the Poisson bracket to $\delta$, but keeping the multiplication in the usual Poisson equation.

\begin{ex}
In the example of leaf labeled planar planted forests, $\delta$ corresponds to cutting a single edge of the forest.
In simplicial terms, $\delta$ defines the $\cup_1$ product; see  \cite[\S\ref{P2-cupsec}]{HopfPart2} for more details.

For the operad of planted planar corollas, we have $\delta(c_1)=c_1\otimes c_1$ and  for $n\geq 2$: $\delta(c_n)=\sum_{k=2}^{n} k \; c_{k}\otimes c_{n-k+1}$.
\end{ex}

\subsubsection{Filtration and structure of infinitesimal co--algebra}
 Then there is an exhaustive  filtration of $\B$ by the powers of $\J$. That is
$\bar T\Coop^{red}=\J^{\leq 0} \subset \J^{\leq 1} \subset \dots \subset \J^{\leq k} \subset \dots \subset \B=\bar T\Coop$. An element $a\in \J^{\leq k}$ if its summands contains less or equal to $k$ occurrences of $|$.
This filtration survives the quotient by $\C$ and gives a filtration in powers of $q$ that can be viewed as a deformation over a formal disc, with the central fiber $z=0$, corresponding to
$q=\exp{2\pi i z}$ and $\exp(2\pi i 0)=1$, so that $q\to 1$ corresponds to $z\to 0$.

\begin{lem}
\label{barlem}
If $\O$ is split unital and $|$ is group--like then
$\Delta(|^n)=|^n\otimes |^n$, and for $a\in \Coop^{nc}(n,p)$ with $\eps_{tot}(a)=0$:
\begin{eqnarray}
\label{barcounteq}
\Delta(a)&=&|^p\otimes a+a\otimes |^n + \bar\Delta(a) \text{ with } \bar\Delta=\sum_{k=p}^n \bar\Delta_k \text{ and }\nn\\
\bar\Delta_k(a)&=&\sum_{i=0}^{k-1} a^{(i,1)}_k\otimes |^{i} a^{(i,2)}_{n-k+1} |^{k-i-1}
+  R  \text{ with  }R \in \J^{\leq p-1} \otimes \J^{\leq k -2}\nn\\
&\text{where}& a_k^{(i,1)}\in \J^{\leq p-1}, a^{(i,2)}_{n-k+1}\in  \Coop^{red}(n-k+1)
\end{eqnarray}
\end{lem}

\begin{proof}
The first statement follows from the bi--algebra equation \eqref{bialgeq} and the fact that $|$ is group like. The second statement follows from the fact that $\eps_{tot}$ is a left and right co--unit.
If we compute $(\eps_{tot}\otimes id)\circ \Delta(a)$ only terms of $\Delta(a)$ of the form $|^k\otimes f_k$  with $\length(f_k)=k$ survive. If $a$ has length $p$,
then  $(\eps_{tot}\otimes id)\circ \Delta(a)=a$, i.e.\ $(\eps_{tot}\circ id)(\sum_k |^k\otimes f_k)=\sum_kf_k=a$, forces all terms of length not equal to $p$ to vanish, $f_k=0, k\neq p$  and  $f_p=a$. Similarly computing $(id\otimes \eps_{tot})\circ \Delta(a)$ only terms of the form $f_k\otimes |^k$ survive where now $\deg(f_k)=k$, and we find that $f_k=0,k\neq n$ and $f_n=a$.

 In general one can count the factors of $|$ that may occur on the right in $\Delta$. Such factors  can only come from factors of $\Coop(1)$.
Fixing the length on the right side of $\Delta$ to be $k$, i.e.\ considering $\Delta_{k,n}$, the maximal number of factors of $|$ appearing in $\Delta_{k,n}(a)$ is $k$, that is $\Delta_{n,k}(a)\subset \B\otimes \J^{\leq k}$. By the above, the maximal number $k$ is only obtained if $k=\deg(a)=n$ the next leading order in $|$ is then $k-1$, which means that only one factor on the right is not $|$.  By the degree grading, this has to lie in $\Coop(n-k-1)$ and if $k=n$, so that $n-k+1=1$ then the factor cannot be $|$, since the maximal number is $|^{k-1}$.
This is the first term in $\bar\Delta$. By the above, we have already seen that the only term with the maximal number of $|$ on the left, i.e.\  $|^p$,
is $|^p\otimes a$, so that all remaining terms are indeed in $\J^{\leq p-1} \otimes \J^{\leq k -2}$.

\end{proof}

\begin{cor}  For elements $a\in\Coop(n,p)$:
\label{barcor}
\begin{equation}
\label{littledeltaeq}
\delta(a)=\sum_{k=p}^{n} \sum_{i=1}^{k-1} a^{(i,1)}_k\otimes a^{(i,2)}_{n-k+1}
\end{equation}
where the  $\sum a^{(i,1)}_k\otimes a^{(i,2)}_{n-k+1}$ are given in \eqref{barcounteq} and can be extracted via
\begin{equation}
\label{explicitdeltaieq}
\sum a^{(i,1)}_k\otimes a^{(i,2)}_{n-k+1}= {\bf u}\circ(id \otimes \eps_1^{\otimes i-1} \otimes id\otimes \eps_1^{\otimes k-i})
(\bar\Delta_k(a))
\end{equation}
where ${\bf u}=id\otimes u_L^{\otimes i-1}\otimes id \otimes u_R^{\otimes k-i-1} $ are the unit constraints to get rid of the factors $\unit$ stemming from the images of $\eps_1$.
\end{cor}
\begin{proof}
We proceed by induction. For elements of length $1$ the claim follows from \eqref{deltaeq} via \eqref{freecocirceq}. For length $p$ we use \eqref{freeinfalgeq} and induction to plug in. The formula \eqref{explicitdeltaieq} follows directly from \eqref{deltaeq}, since it is non--vanishing only on $im(\bar\Delta_k)\cap \J^{k-1}$.
\end{proof}

When we descend to $\H_q$ the filtration in $|$ corresponds to the powers of $q$: $im(\J^{\leq k})=\{p:p\in \H_q=\bar T\Coop^{red}[q], \deg_q(p)\leq k\}$
\begin{cor}
\label{rescor}
The co--pre--Lie Poisson $\delta$ descends to  $\H_q$, and its terms are given by
terms of  $\bar\Delta_k$ whose degree on the right is $q^{k-1}$. Alternatively, setting $\bar\Delta_{q,k}=(id \otimes q^{-k})
\circ \bar\Delta_k$, to compensate the grading on the right side, $\delta=res_{-1}(\sum_k \bar \Delta_{q,k})$ where $res_{-1}$ is the residue in $q$.
\end{cor}

\begin{proof}
This follows directly from  \eqref{barcounteq} and Corollary \ref{barcor}.
\end{proof}

\subsection{Derivations and irreducibles}

In the case of multiple zeta values, applying the Hopf quotient to the infinitesimal structure and restricting to indecomposables, yields Brown's operators $D_n$  \cite{brownann} that determine his co--action, see also \S \ref{overlapex} and \S\ref{coactionpar}. This holds more generally:

\begin{thm}
\label{indecthm}
Taking the limit $q\to 1$ or equivalently regarding the  quotient $\H$, the co-pre--Lie structure induces a co-Lie algebra structure on the indecomposables $\bar \H/\bar\H\bar\H$, where $\bar\H$ is reduced version of $\H$.
\end{thm}
\begin{proof}
By
The indecomposables in reduced Hopf algebra are precisely given by $\Coop^{red}$ and the co--pre--Lie structure is $\delta_c$ of \eqref{deltaeq}.
Its co--commutator yields to co--Lie structure corresponding dually the  usual Lie structure of Gerstenhaber \cite{Gerst}.
\end{proof}

\subsubsection{The example of overlapping sequences and Goncharov's Hopf algebra}
\label{overlappar}
Considering Example \ref{overlapex}, the bi--algebra structure $\B$ is simply given by re--interpreting the r.h.s.\ of \eqref{overlapeq} as a free product.
There is a co--unit for the co--operad which is given by $\eps(a_0;a_1)=1$ and $0$ else. Indeed under $\eps\otimes id$ the only non--zero term of the r.h.s.\ of \eqref{overlapeq} is $(a_0;a_n)\otimes (a_0;a_1\kdk a_{n-1}; a_n)$. Applying $id \otimes \eps^{\otimes k}$ a non--zero term occurs only when $k=n$ as $(a_0; a_1 \kdk a_{n-1}; a_n)\otimes (a_0;a_1)\otimes (a_1;a_2)\odo (a_n;a_{n+1})$.
In the Hopf quotient $(a_0;a_n)$ and the $(a_i;a_{i+1})$ will be set to $1$.

\begin{prop}
Consider the projection $\pi:\B'=T\Coop\to S\Coop$
Let $\pi(a_0;a_1\dots a_{n-1}; a_n)=\hat I(a_0; a_1\kdk a_{n-1};a_n)$.
In the case of $S=\{0,1\}$
Then after taking the Hopf quotient, that is considering the induced map $\pi_\H:\H\to \H_{com}$, where $\H_{com}=\H_{com}/[\H_{com},\H_{com}]$ is the Abelianization,
we obtain Goncharov's first Hopf algebra $\H_{com}=\H_G$.
The operadic degree of $(a_0;\dots ; a_n)$ is $n$ and hence the weight is $n-1$ $\wt(a_0;a_1\kdk a_{n-1}; a_n)= n-1$ is the weight of $\hI(a_0;a_1\kdk a_{n-1};a_n)$ as defined previously.
\end{prop}

\begin{proof}
The relations on the symbols $\hat I$ are precisely that they commute and \eqref{emptycondeq}. The latter condition already holds in $\H$, namely $(a_0,a_1)=1$ and hence $\hat I(a_0,a_1)=1$.
\end{proof}

\begin{rmk} \mbox{}
\begin{enumerate}
\item In the case $S\subset {\mathbb C}$, we get the Hopf algebra for the polylogs, \cite{Gont}.
\item The role of the depth as the number of $1$s  is not as clear in this formulation. See
Remark \cite[\ref{P2-depthrmk}]{HopfPart2}
for a possible explanation using Joyal duality.  That remark also links it to the depth filtration used in \S\ref{gencooppar}.

\end{enumerate}

\end{rmk}
\begin{prop}
The action of $\delta$ is given by
\begin{multline}\delta (\hI(a_0;a_1\kdk a_{n-1};a_{N+1})=
\sum_{n=3}^{N-3} \sum_{p=0}^{N-n}\\
\hI(a_0;a_1\kdk a_p,a_{p+n-1}\kdk a_N;a_{N+1})\otimes \hI(a_p;a_{p+1}\kdk a_{p+n};a_{p+n-1})
\end{multline}
\end{prop}
\begin{proof}
This is a straightforward calculations. Note that the sub--sequences of length $2$ are in $\Coop(1)$ and are set to $q$  respectively $1$ in the different quotients, so that the reduced structure starts with words of length 3 on the left and stops with words of length 3 on the right.
\end{proof}
\begin{cor}
Incorporating the projection  and the co--action this  recovers the co--derivations $D_n$ of Brown \cite[(3.4)]{brownann} as the degree $n$ part. Moreover, this is exactly in $q$ degree $N-n$.
\end{cor}

\subsection{Bi-- and Hopf algebras from symmetric (co)--operads}
\label{freesymsec}
Before treating co--operads, we consider operads as a warm--up.

\subsubsection{Operads based on sets and invariants}
To dualize symmetric operads, it will be important to go the generality of indexing by in arbitrary sets, see e.g.\ \cite{MSS}. This means that for any finite set $S$ we have an $\O(S)$ and any isomorphism $\sigma:S\to S'$ an
isomorphism $\O(S)\to \O(S')$. The composition is then  defined for any map $f:S\to T$ as a morphism $\O(T)\otimes \bigotimes_{t\in T} \O(f^{-1}(t))\to\O(S)$ which is equivariant for any diagram of the form
\begin{equation}
\label{square}
\xymatrix{
S\ar[r]^f\ar[d]_{\sigma'}&T\ar[d]^\sigma\\
S'\ar[r]^{f'}&T'\\
}
\end{equation}
Thus, $f'=\sigma \circ f\circ \sigma^{\prime -1}$ and the partition $S=\amalg_{t \in T} f^{-1}(t)$ maps to the partition $S'=\amalg_{t'\in T'}f^{\prime -1}(t')=
\amalg_{\sigma(t): t\in T} \sigma'(f^{-1}(t))$. That is $\sigma'$ maps the fibers to the fibers and $\sigma$ permutes the fibers.

Recall that if we are only given the $\O(n)$ then the extension to finite sets is given by $\O(S):=\colim_{f:S\leftrightarrow \underline{n}}\O(n)$. Where $\underline{n}=\{1,\dots, n\}$ and the co--limit is over bijections. Concretely in an Abelian category this is
$\O(S)=(\bigoplus_{\phi:S\stackrel{1-1}{\leftrightarrow}\underline {n}}\O(n))_{\SS_n}$ where $\SS_n$ acts by post--composing.
 This actually yields an equivalence of categories between  operads over finite sets and operads.
 Notice that $\underline{0}=\emptyset$ and  we can restrict the
considerations to non--empty sets and surjections with the skeleton consisting of $\underline{n},n\geq 1$ and surjections.

\begin{rmk} In the following, we will use the language of  limits and co--limits, which are categorical notions, see e.g.\ \cite{MacLane}.
The important examples here are invariants and coinvariants. In particular, if $G$ acts on $X$, then $\lim_G(X)=X^G$ are  and $\colim_G(X)=X_G$. There is one more feature that is exploited in the following.
The invariants are a sub--object $X^G\to X$ and the coinvariants are a quotient object $X\to X_G$ and they have certain universal properties.
For the notation, see the Introduction.

\end{rmk}

\begin{lem}
Using the equivariance of the maps $\gamma_f$ one  obtains an induced map on invariants $\bar\gamma_{k;n_1,\dots,n_k}:\O(k)^{\SS_k}\otimes Symm(\bigotimes_{i=1}^k\O(n_i)^{\SS(n_i)})\to \O(n)^{\SS_n}$, where $Symm(\bigotimes_{i=1}^k\O(n_i)^{\SS(n_i)})$ are the symmetric tensors.
\end{lem}

\begin{proof}

The following proof is technical and involves limits. In the concrete example of vector spaces, the morphisms $i$ and $\imath$ are natural  inclusions.

Let $|S|=|S'|=k$, $n_t=|f^{-1}(t)|=|f'^{-1}(\sigma(t))|$ and $\sigma'_t:f^{-1}(t)\to (f')^{-1}(\sigma(t))$  be the restriction.
For all pairs of isomorphisms $(\sigma',\sigma)$ given above,  we have a natural diagram

\begin{equation}
\label{symopcompeq}
\resizebox{\textwidth}{!} {
\xymatrix{
\O(T)\otimes \bigotimes_{t\in T} \O(f^{-1}(t))\ar@(dl,ul)@<-5pc>[dd]_{\sigma
\otimes\bigotimes_T\sigma'_t}\ar[rr]^{\gamma_f}&&\O(S)\ar@(dr,ur)@<1pc>[dd]^{\sigma'}\\
(\O(k)\otimes \bigotimes_{i=1}^k\O(n_i)^{\SS(n_i)})^{\SS_k}\ar[u]_i
\ar[d]^i&
\O(k)^{\SS_k}\otimes Symm(\bigotimes_{i=1}^k\O(n_i)^{\SS(n_i)})
 \ar[l]_{\imath}\ar[r]^-{\bar \gamma_{k;n_1,\dots,n_k}}\ar[ur]^{\imath\circ i\circ\gamma_f}\ar[dr]_{\imath\circ i\circ\gamma_{f'}} &\O(n)^{\SS_n}
\ar[d]\ar[u]\\
\O(T')\otimes \bigotimes_{t'\in T'} \O(f^{\prime \, -1}(t'))\ar[rr]_{\gamma_{f'}}&&\O(S')\\
}}
\end{equation}
in which the outer square of the diagram below commutes. Moreover, this diagram exists for all $\sigma'$ and any fixed $f:T\to S$ whose $k$ fibers have the right cardinalities $n_i$ One choice is given by $\sigma'=id$ and $f'=\sigma\circ f$ .
The morphisms are defined as follows: let $Iso(n,k)$ be the category with objects
the surjections $S\to T$ with $|S|=n$ and $|T|=k$ and morphisms the commutative
diagrams of the type \eqref{square} with $\sigma,\sigma'$ bijections and $f,f'$ surjections, and $Iso(n)$ the category with objects $S$, with $|S|=n$ and bijections.
Then
\begin{enumerate}
\item $\lim_{Iso(n)}\O=\O(n)^{\SS_n}$ are the invariants.
\item $\lim_{Iso(n.k)}\O= \bigoplus_{(n_1,\dots,n_k):\sum n_i=n} \left(\O(k)\otimes \bigotimes_{i=1}^k\O(n_i)^{\SS(n_i)}\right)^{\SS_k}$ where on $\O(f:S\to T)$ = $ \O(T) \otimes_T\bigotimes_{T} \O(f^{-1}(t))$ with
$Aut(T)\simeq \SS_k$ acting anti--diagonally as $\sigma\otimes \sigma^{-1}$. These invariants include into the $\O(T)\otimes \bigotimes_T \O(f^{-1}(t))$ by virtue of being a limit.
\item The invariants under the full $\SS_k\times \SS_k$ action on $\O(k)\otimes \bigotimes_{i=1}^k\O(n_i)^{\SS(n_i)}$ are
are $\O(k)^{\SS_k}\otimes Symm(\bigotimes_{i=1}^k\O(n_i)^{\SS(n_i)})$, where $Symm$ is the subspace of symmetric tensors,
These include into the invariants of only the $\SS_k$ action using the anti--diagonal embedding $\SS_k\subset \SS_k\times \SS_k$, since technically this is again a limit.

\item The map $\bar \gamma$ exists by the universal property of limits applied to $\O(n)^{\SS_n}$ and the cone given by the  $\gamma_f\circ i\circ \imath$.
\end{enumerate}
\end{proof}
\begin{rmk}
\label{coinvrmk}
These are exactly universal operations in the sense of \cite[\S6]{feynman} for the Feynman category $\FF_{May}$, see \cite[\S6]{feynman} and \cite[\S\ref{P2-universalpar}]{HopfPart2}.
\end{rmk}

\subsubsection{Symmetric co--operads}
The dual notion  is a symmetric co--ope\-rad indexed by finite sets. This is  a collection of objects $\{\Coop(S)\}$ for all finite sets $S$, isomorphisms $\sigma^*:\Coop(S')\to \Coop(S)$ for bijections $\sigma:S\to S'$ for any surjection $f:S\ta T$
\begin{equation}
\check\gamma_f :\Coop(S)\to  \otimes \bigotimes_{t\in T}\Coop(f^{-1}(t))
\end{equation}
that are equivariant with respect to the isomorphisms.

\begin{lem}
The degree--wise dual of a symmetric operad is a symmetric co--operad and the association is functorial. \qed
\end{lem}

\begin{prop}
The maps $\check\gamma_f$ descend to the coinvariants as a map
$\bar\gamma^{\vee}_{k;n_1,\dots,n_k}:\Coop(k)_{\SS_k}\otimes \bigodot_{i=1}^k\O(n_k)_{\SS(k)}\to \Coop(n)_{\SS_n}$, where $\bigodot$ denotes the symmetric (aka.\ commutative) tensor product.
\end{prop}

\begin{proof}
The dual diagrams, to \eqref{symopcompeq} are
\begin{equation}
\label{coprodsqeq}
\resizebox{\textwidth}{!} {
\xymatrix{
\Coop(S)\ar[rr]^{\check\gamma_f}\ar[d]\ar@(dl,ul)@<-1pc>[dd]_{\sigma'}\ar[dr]^{\gamma_f\circ p\circ\pi}&&\Coop(T)
\otimes \bigotimes_{t\in T}\Coop(f^{-1}(t))\ar[d]^p
\ar@(dr,ur)@<5pc>[dd]^{\sigma\otimes\bigotimes_T \sigma'_t}\\
\Coop(n)_{\SS_n} \ar[r]^-{\bar \gamma^{\vee}_{k;n_1,\dots,n_k}} &\Coop(k)_{\SS_k}\otimes \bigodot_{i=1}^k\O(n_k)_{\SS(k)}&\check\O(k)\otimes_{\SS_k}\bigotimes _{i=1}^k\check\O(n_k)\ar[l]_--{\pi}\\
\check\O(S')\ar[rr]^{\check\gamma_{f'}}\ar[u]\ar[ur]_{\gamma_f\circ p\circ\pi}&&\Coop(S')\otimes \bigotimes_{t'\in T'}\Coop(f')^{-1}(t')\ar[u]_p\\
}
}
\end{equation}
where
\begin{enumerate}
\item $\colim_{Iso(n)}\check\O=\O(n)_{\SS_n}$ are the coinvariants.
 \item $\colim_{Iso(n.k)}\check\O=(\check\O(k)\otimes_{\SS_k}\bigotimes_{i=1}^k\check\O(n_k)$, where $\SS(k)$ acting anti--diago\-nal\-ly yields the relative tensor product.
 \item These  project to the full coinvariants under the $\SS(k)\times \SS(k)$ action:
 $\check\O(k)_{\SS_k} \otimes \bigodot_{i=1}^k\check\O(n_k)_{\SS(k)}$
\item The map $\bar\gamma^{\vee}_{k;n_1,\dots,n_k}$ exists by the universal property of co--limits applied to $\check\O(n)_{\SS_k}$ and the co--cone given by the $\pi\circ p\circ\gamma_f$. Again fixing an $f$ with the correct cardinality of the fibers, we have morphisms $f'$ defined by any given $\sigma'$, and some choice of $\sigma$, e.g.\ $\sigma=id$.

\end{enumerate}
\end{proof}

\begin{rmk}
\label{rigidrmk}
In order to obtain a system of representatives one has to ``enumerate everything'' in order to rigidify. This means that for $S$ and $T$ one fixes an isomorphism to $\{1\kdk |S|=n\}$ and $\{1\kdk |T|=k\}$ {\it and} considers them as ordered sets.
Then the unique order preserving map $f$ with fiber cardinalities $n_1\kdk n_k$ is a representative for $\bar\gamma^{\vee}_{k;n_1\kdk n_k}$
\end{rmk}
\begin{rmk}
\label{equirmk}
Using the co--cone $\gamma_f\circ p$
there is the intermediate possibility to have the co--multiplication $\check\gamma_{k,n_1,\dots,n_k}$ as a morphism $\check\gamma : \check\O(n)_{\SS(n)}\to\check \O(k)\otimes_{\SS_k}\bigotimes_{i=1}^k \check\O(n_k)_{\SS(k)}$. This is an interesting structure that has  appeared for instance in \cite{Kurush}.
This corresponds to Figure \ref{fig:coproducts2b}.
\end{rmk}
\subsubsection{Bi--algebras and Hopf algebras in the symmetric case}
We now proceed similarly to the non--$\Sigma$ case, but using symmetric algebras on the coinvariants instead.

For a  locally finite symmetric co--operad $\{\Coop\}$, set  $\Coop_{\SS}=\bigoplus_k\Coop(k)_{\SS_k}$  and define $\B=\bar S\Coop_\SS$, the reduced symmetric algebra on $\Coop_\SS$, aka.\ the free commutative algebra,  and $\B'=S\Coop_\SS$, the free unital symmetric algebra, aka.\ the free unital commutative algebra.
By summing over the $k$ and $n_i$, we obtain the morphism $\bar\gamma^{\vee}:\Coop_\SS\to \Coop_\SS\otimes \B$. We extend to a  co--product $\Delta$ using the bi--algebra equation \eqref{bialgeq} to obtain a co--multiplication on $\B$ and define $\Delta(1)=1\otimes 1$ to extend to $\B'$.
\begin{equation}
\Delta:\B\to \B\otimes \B \text{ and } \Delta:\B'\to \B'\otimes \B'
\end{equation}
$\B$ and $\B"$ still have the double grading by degree and length, that is
$\B=\bigoplus_{n\geq k\geq  1}\B(n,k)$ and $\B'=\bigoplus_{n\geq k\geq  0}\B'(n,k)=\unit \oplus \B$.
Given a co--unit for the co--operad\footnote{There is no additional assumption as the symmetry group that acts on $\Coop(1)$ is trivial}, we see that $\eps^{\otimes k}$ is symmetric and hence gives a morphisms $\eps_k:\Coop(k)_{\SS_k}\to \unit$. Set $\eps_{tot}=\sum_k \eps_k:\Coop_\SS\to \unit$. We set $\Coop^{red}(n)=\Coop(n)$ for $n\geq 2$ and $\Coop^{red}_\SS=\bigoplus_{k\geq 1}\Coop^{red}_{\SS_k}(k)$.
\begin{thm}
\label{symbialgthm}
$\Delta$ is co--associative,
 $\B$ is a commutative bi--algebra and $\B'$ is a commutative unital bi--algebra.
The bi--algebra $\B$ and the unital bi--algebra $\B'$ have a co--unit if and only if the co--operad $\{\Coop(n)\}$ has a split co--unit.
In this case, $\B'\simeq\bar S\Coop^{red}_\SS[q]$.
These associations are functorial.

\end{thm}

\begin{proof} The fact that the co--product $\Delta$ is co--associative follows from the equivariance and co--associativity of the co--operad $\{\Coop(n)\}$ in a straightforward fashion.
The statements about $\B$ and $\B'$ then follow from their definition.
The fact that $\eps_{tot}$ is a co--unit is verified as in the non--$\Sigma$ case and the other direction follows from Proposition \ref{freeunitprop}.
If the co--operad has a split co--unit, we denote the image of $|$ in $\Coop_\SS$ by $q$. Since $\B'$ is already commutative, $q$ commutes with everything, and we can collect the powers of $q$ in each monomial leading to the identification with the polynomial ring over $\bar S\Coop^{red}$.

Finally, it is clear that a morphism of symmetric operads $\O\to \P$ induces a morphism $\check\P\to \Coop$ and due to the compatibility of the $\SS_n$ actions a morphism from $\check\P_\SS\to \Coop_\SS$ which is compatible with the $\bar\gamma^{\vee}$. The free functors are also functorial showing the functoriality of $\B$ and $\B'$.
\end{proof}

{\em A symmetric operad $\O$ is co--connected}, if it is split unital, $|$ is group--like for $\Delta$ and $(\Coop(1),|,\Delta|_{\Coop(1)})$ is connected.
Let $\I'$ be the two sided ideal of $\B'$ generated by $1-q$. It is straightforward to check that this is also a co--ideal using the proof of Proposition \ref{Iprop}.

\begin{thm}
\label{symhopfthm}
 If $\O$ is co--connected, $\H=\B'/\I'$ is a Hopf algebra. Under the identification $\B=\bar S\Coop^{red}_\SS[q]$, the limit $q\to 1$ yields $\H$.
\end{thm}
\begin{proof}
Analogous to Theorem \ref{hopfthm} and Corollary \ref{defcor}.
\end{proof}
\begin{ex}
\label{symmetryex}

In the examples of leaf--labeled rooted trees and surjections, extra multiplicities appear from considering the symmetric product, which identifies certain contributions. In particular, let $[c_n]$ be the class
of an un-labeled non--planar corolla then
\begin{equation}
\Delta([c_n])=\sum_{\mbox{\tiny
\begin{tabular}{l}
$k,1\leq n_1 <\dots < n_l$\\$1\leq m_1,\dots \kdk m_l:$\\
$\sum m_j=k,\sum m_ln_i=n $
\end{tabular}}
}
{k\choose m_1 \cdots m_k} [c_k]\otimes [c_{n_1}][c_{n_2}]\cdots [c_{n_k}]
\end{equation}

\end{ex}
\begin{rmk}
Since $\H$ is commutative in the symmetric case, its dual is co-commutative and $\H^*=U(Prim(\H^*))$ by the Cartier--Milnor--Moore theorem.
This relates to the considerations of \cite{del,CL}.
We leave the complete analysis for further study.
\end{rmk}
\begin{ex}
Reconsidering the examples in this new fashion, we see that:
\begin{enumerate}
\item For the ordered surjections, in the symmetric version, we get all the surjections, since the permutation action induces any order. These are pictorially represented by forests of non--planar corollas.
Taking coinvariants makes these forests unlabeled and the forests commutative.

\item This carries on to the graded case like in Baues.
\item For the trees, we go from planar planted trees to leaf--labeled rooted trees in the symmetric version. The trees/forrests are again un--labeled on the coinvariants.
\end{enumerate}
\end{ex}

\subsubsection{Infinitesimal structure}
For the infinitesimal structure, we notice that although the individual $\circ_i$ are not well defined on the invariants $\Coop(n)_{\SS_n}$ their sum $\circ$ is. This was first remarked in \cite{KapMan}.
Again, these are universal operations for the Feynman category for pseudo--operads $\operads$, see \cite[\S6]{feynman}.
Thus, the dual $\check\circ=\delta_c$ is well defined and we can again use \eqref{freeinfalgeq} to extend to  maps
\begin{equation}
\delta:\B\to \B\otimes \B \text{ and } \delta:\B'\to \B'\otimes \B'
\end{equation}
this map is again pre--co--Lie and Poison. The analogue of Lemma \ref{barlem} and Corollary \ref{barcor} hold, as does:
\begin{thm}
\label{symcopreliethm}
The co-pre--Lie structure induces a co-Lie algebra structure on the indecomposables $\bar \H/\bar\H\bar\H$, where $\bar\H$ is reduced version of $\H$.

If $\O$ is split unital, the terms of the co--pre--Lie Poisson $\delta$  on $\B=\bar S\Coop^{red}_\SS[q]$ are given by
terms of  $\bar\Delta_k$ whose $q$--degree on the right is $q^{k-1}$. Alternatively, setting $\bar\Delta_{q,k}=(id \otimes q^{-k})
\circ \bar\Delta_k$, to compensate the grading on the right side, $\delta=res_{-1}(\sum_k \bar \Delta_{q,k})$ where $res_{-1}$ is the residue in $q$.
\end{thm}
\begin{proof}
Analogous to Theorem \ref{indecthm} and Corollary \ref{rescor}.
\end{proof}

\begin{rmk}
There are actually no new symmetry factors in the examples appearing for $\delta$ since it only involves products with one term that is not $|$.
\end{rmk}
\subsection{Connes--Kreimer co--limit quotient}
\label{freeampsec}
To obtain the original Hopf algebra of Connes and Kreimer and its planar analogues, we have to take one more step, which in the general case is only possible if an
additional structure  is present.

\begin{df}
A clipping or amputation structure,  for a non-$\Sigma$ co-operad $\Coop$ is a
co--semisimplical structure that is compatible with the operad compositions, i.e.\

\begin{enumerate}
\item   There are maps $\sigma_i:\Coop(n)\to \Coop(n-1)$, and for $i\leq j: \sigma_j\circ \sigma_i=\sigma_i\circ\sigma_{j+1}$. In case that there is no $\Coop(0)$, these maps are defined for $n\geq 2$.
\item  For all $n$, and partition $(n_1,\dots, n_k)$ of $n$ and each  $1\leq i\leq  n$,
with $i$ in the $n_j$ component of the partition and $i_j$ its  position within this block, i.e.\ $i=\sum_{k=1}^{j-1}n_j +i_j$:
\begin{equation}
\label{cutgammaieq}
\check\gamma_{k;n_1,\dots,n_j-1,\dots, n_k}\circ \sigma_i=
(id\otimes id \odo \sigma_{i_j}\otimes id  \odo id)\circ
\check\gamma_{k;n_1,\dots,n_j,\dots, n_k}
 \end{equation}
 with the factor $\sigma_{i_j}$ in the $j+1$--st position acting on $\Coop(n_j)$.
 \end{enumerate}
In case that there is no $\Coop(0)$, we demand that $n\geq 2$ and also that $n_j\geq 2$.

 For  symmetric co--operads,
in the set indexed  version, the data and compatibilities for and amputation structure are given by a functor $\Coop$ from the category of finite set and surjections together with a compatible co--operad structure on the $\Coop(S)$. This boils down to:
 \begin{itemize}

\item[(1')]  Amputation morphisms $\sigma_s:\Coop(S)\to \Coop(S\setminus s)$ for all $s\in S$. Which commute $\sigma_s\circ\sigma_{s'}=\sigma_{s'}\circ\sigma_{s}$, and are compatible with isomorphisms induced by bijections. Namely, for $\phi:S\to T$ a bijection, let $\phi|_{S\setminus \{s\}}:S\setminus \{s\}\to T\setminus \{\phi(s)\}$ be the  bijection  induced by restriction, then:
\begin{equation}
\sigma_{\phi(s)}\circ\Coop(\phi)=\Coop(\phi|_{S\setminus \{s\}})\circ \sigma_s
 \end{equation}
Where $\Coop(\phi):\Coop(S)\to \Coop(T)$ are the structural isomorphisms, see \S\ref{freesymsec}.
Note that if $S=\{s\}$ then  $\sigma_s:\O(\{s\})\to \O(\emptyset)$ and this is excluded, if we make the assumption  that there is no $\Coop(0)=\Coop(\emptyset)$.
\item[(2')] Given a surjection $f:S\ta T$ for any $s \in S$, let $f|_{S\setminus \{s\}}$ be the restriction. Let $t=f(s)$ then the map
$f|_{S\setminus \{s\}: S\setminus \{s\}}\ta T$ is surjective as long as $f^{-1}(t)\neq \{s\}$. In this case, we set  $f_{S\setminus s}=f|_{S\setminus s}$.
This is the only case, if we exclude $\Coop(\emptyset)$. In case $\Coop(\emptyset)$ is allowed, if $f^{-1}(t)= \{s\}$ then we define $f_{S\setminus s}=f|_{S\setminus s}:S\setminus\{s\}\to T\setminus \{t\}$.
With this notation the compatibility  equation is
\begin{equation}
\label{cutequseq}
\check\gamma_{f_{S\setminus \{s\}}}\circ \sigma_s=\sigma_s\circ \check\gamma_{f}
\end{equation}
\end{itemize}
\end{df}

\begin{ex} If $\O(n)$ is free, e.g.\ if it comes from a co--simplicial $\Set$ operad, then one can use the maps
maps $\sigma_i:\O(n)\to \O(n-1)$, to induce map $\Coop(n)\to \Coop(n-1)$ by using a basis $\delta_{\tau}$ of $\Coop(n)$ and sending $\delta_{\tau}\to \delta_{\sigma_i(\tau)}$.
If these are compatible with the operad composition and in the symmetric case with the permutation group action, then one obtains an amputation structure. Note, we do not change the variance which would be the case if we were to use functoriality. Instead, we are using a basis to retain the variance.

The paradigmatic example is the deletion of tails planar planted trees or labeled rooted trees. Here a $\sigma_i$ removes the $i$-th tail.  This is either the $i$-th tail in the planar order or the tail labeled by $i$. Likewise $\sigma_s$ removes a tail labeled by $s$.  The equations \eqref{cutgammaieq} and \eqref{cutequseq} then just state that it does not matter whether one first removes a tail and then cuts an internal edge, or vice--versa.
\end{ex}

For a non--$\Sigma$ co--operad without an $\Coop(0)$, but with an amputation structure,  we define $\B^{\it amp}=\colim_{\DDelta}\Coop^{nc}$, where the co--limit is taken along the directed system of amputation morphisms $\sigma_i$ encoded by $\DDelta$.
In the symmetric case,  again without $\Coop(\emptyset)$ we take the co--limit over the directed system $\DDelta S$ induced by the $\sigma_s$: $\B^{\it amp}:=\colim_{\DDelta S}\B$.
Due to the equivariance the co--limit factors through to the coinvariants.

In particular, under our standard assumption, the co--limit is the quotient $\B^{\it amp}=\bar T(\bigoplus_{n}\Coop(n)/\!\sim)$ where $\sim$ is the equivalence relation generated by $\O(n)\ni a_n\sim b_{n-1}\in \O(n-1)$ if there exists an $i$ such that $\sigma_i(a_n)=b_{n-1}$. In the symmetric case the co--limit can be computed as $\B^{\it amp}=\bar S(\bigoplus_{n}\Coop(n)_{\SS_n}/\!\sim)$ where now $\O(n)_{\SS_n}\ni [a_n]\sim [b_{n-1}]\in \O(n-1)_{\SS_{n-1}}$ if there exists an $i$ such that $\sigma_i(a_n)=b_{n-1}$.

\begin{thm}
\label{freeampthm}
 For an $\O$ with an amputation structure, $\B^{\it amp}$ is a bi--algebra.
In both cases, these co--limits descend to the quotient $\H$. Set $\H^{\it amp}=colim_{\DDelta} \B/\I$.
 It has a co--unit if and only if $\O$ has a split unit. If $\O$ is co-connected,
$\H^{\it amp}$ is a Hopf algebra, and if $\O$ is symmetric co--connected $\H^{\it amp}$ is a commutative Hopf algebra.\qed
\end{thm}

\begin{proof}
The co--product is defined via the $\check\gamma$ and these are compatible with the amputation, so that the co--product descends.
The product and compatibilities also descend as the $\sigma_i$ and $\sigma_s$ commute with disjoint unions. Since there is no $\Coop(0)$ the generator of the ideal $1-|$ remains untouched.
\end{proof}

\begin{ex}
Taking this co--limit on the planar planted trees or the labeled rooted trees yields the Hopf algebras of Connes and Kreimer. What is left in the co--limit are representatives which are trees without tails, sometimes called amputated trees \cite{Kreimer,BBM}.  The root half--edge, which is not amputated by the co--limit, simply indicates the planar order at the root in the planar case and can be forgotten in the rooted case, by marking the root vertex instead. The co--limit also removes any tails  after an edge is cut, thus it also removes any tails of appearing from the cut edges effectively deleting the edges; the leaf half edge is clipped, and the non--deleted half--edges of the cut edges are all root half--edges, and can also be forgotten by the procedure above.  Also notice that if the tail itself is cut, the condition that $|=1$ takes care of the  cuts ``above'' a leaf vertex or ``below'' the root vertex in the conventions of Connes and Kreimer and hence the co--product is exactly that of Connes and Kreimer, both in the commutative/symmetric and noncommutative/non--$\Sigma$ case.
A pictorial representation is given in Figure \ref{CKcoprod}.
\end{ex}

\subsubsection{Adding a formal $\Coop(0)$ to compute $\H^{\it amp}$}
\label{colimpar}
It is convenient to think of the elements  representing the co--limits as being elements of a $\Coop(0)$. In this case, $\Coop(0)$ becomes a final object and the co--limits are more easily computed. Notice that $\Coop(1)$ is not a final object for the clipping structure, as there are $n$ morphisms from $\Coop(n)$ to $\Coop(1)$ ``forgetting'' all but one $i$.
Considering the co--limits to lie in an additional $\Coop(0)$ yields an equivalent formulation whose construction is more involved, but whose pictorial representations are more obvious.
We assume that $\O$ is split unital and, we now allow $\Coop(0)$ and consider $\sigma_1:\Coop(1)\to \Coop(0)$ with the conditions that

\begin{enumerate}
    \item $\Coop(0)$ only contains elements that are images of $\sigma_i$ from higher $\Coop(n)$ and $\sigma_1(a)=[a]$ that is the class that $a$ represents in the co--limit if $a\notin \unit$ and
     $\sigma_1(|)=1$.
       \item Define a co--product and product structure on $\Coop(0)$ by using representatives in $\Coop(n),n\geq1$ and the co--operad and multiplication structure for the $\Coop(n),n \geq 1$, where  $\sigma_1(|)=1$ is identified with as the unit of the product on $\Coop(0)$.

\end{enumerate}
It is clear then, that $\Coop(0)$ is just the co--limit $(\unit\oplus\B^{\it amp})/(1-|)=\H^{\it amp}$
with the induced structures.

\begin{prop}
 Enlarging $\Coop$ in this way and taking the co--limit directly yields $\Coop(0)=\H^{\it amp}$.
\qed
\end{prop}

The computation in the co--limit version can be made using the formalism of \cite[\S\ref{P2-kreimerapp}]{HopfPart2}.

\subsection{Coaction}
\label{coactionpar}
\subsubsection{$\Coop(0)$ and coaction}
As we have seen in the last section, although it sometimes does make sense to include $\Coop(0)$,
in general $\Coop(0)$  may prevent local finiteness and is hence be a potential hindrance to being co-nilpotent. It may also cause issues for summing over the maps $\Delta_k$.
There is a remedy in which $\Coop(0)$ is viewed as a co--algebra over a co--operad and likewise $\Coop^{nc}(0)$ as a co--module over the Hopf algebra.

For this consider a co--operad with  $\Coop(0)$.
Then $\Coop(0)$  becomes a co-algebra over the co--operad $\{\Coop(n),n\geq 1\}$,  whose  co--operad structure is defined by using only the terms of the co-product of the form $\check \gamma_{n_1,\dots,n_k}$ with $k$ and $n_i>0$, via $\check\gamma_{0^k}:=\check \gamma_{k;0,\dots,0}:\Coop(0)\to \Coop(k)\otimes \Coop(0)^{\otimes k}$, $k>0$.
We will assume that this co-action is locally finite, that is for any $a\in \Coop(0)$ there sum over all $\check \gamma_{0^k}(a)$ is finite.

Set $\B=\bar T\Coop_{\geq 1}$ or $\B=\bar S\Coop_{\geq 1}$ in the symmetric case,  omitting the zero summand $\Coop_{\geq 1}=\bigoplus_{n\geq 1}\Coop(n)$.
Denote by $\mu$ the free (symmetric) multiplication, then   $\mu:\Coop^{nc}(0)\otimes \Coop^{nc}(0)\to \Coop^{nc}(0)$  makes $\Coop^{nc}(0)$ into an algebra whose multiplication is compatible with the $\check\gamma_{0^k}$ by definition.

Summing over all the $\check\gamma_{0^k}$ and post--composing with the multiplication on the factors of $\Coop(0)^{\otimes k}$ we get a co--algebra map
\begin{equation}
\label{coperadmap}
  \check\rho:  C\to \B\otimes C
\end{equation}
where now $C=\bar T\Coop(0)=\Coop^{nc}(0)$.

\subsubsection{Motivating examples}
Consider $\O$ with $\O(0)$ then $\O(0)$ is an algebra over $\O^\oplus=\bigoplus_{n\geq 1} \O(n)$ via: $\gamma:\O(k)\otimes\O(0)^{\otimes k}\to \O(0)$.
Dually, we see that $\Coop(0)$ is a co--algebra over the co--operad $\Coop(n)$.
This construction extends to $\Coop^{nc}$, where $\Coop^{nc}(0)=T\Coop(0)=\bigoplus_n \Coop(0)^{\otimes n}$.
Now, we do have a well defined co--operad with multiplication structure on $\B=\bigoplus_{n\geq 1}\Coop^{nc}(n)$
and by restriction, we have a co--module given by extending
\begin{equation}
\label{coaction}
    \Coop(0)\stackrel{\check\rho}{\to} \bigoplus_{k\geq 1}\Coop(k)\otimes \Coop(0)^{\otimes k}
\end{equation}

\begin{ex}
In the case of trees, we can also consider trees without tails. These will have {\em leaf vertices}, i.e.\ unary non-root vertices. Making admissible cuts those that leave a trunk that has only tails and no leaves, and branches that only have leaf vertices and no tails, we get a natural co--algebra structure.
This is precisely the co--action \eqref{coaction}.

The construction is related to \S\ref{colimpar} in that the latter differs from the former only in applying the co--limit deleting the tails on the left  hand side as well.

\end{ex}

\subsubsection{Co-algebras} The construction above, immediately generalizes to
 any co--algebra $C$  over a co--operad $\Coop$ with a compatible multiplication.  Such a co--algebra by definition has $\check\rho^n :C \to \Coop(n)\otimes C^{\otimes n},n>0$. The extra datum is an associative algebra structure for $C$: $\mu_C:C\otimes C\to C$, which is compatible with the co--algebra over the co--operad  $\Coop$ in the usual way. We furthermore assume that the $\check\rho^n$ are locally finite.
Then  we can define co--algebra maps $\check\rho:C\to \B\otimes C$ by setting $\B=\bigoplus_{n\geq 1}\Coop(n)$ as usual and defining the co--action by $\check\rho=\sum_n \mu^{\otimes n-1}\circ \check \rho^{n}$.

\subsubsection{Half--infinite chains, co--algebra}
\label{semiinfex}
One interesting algebra comes from adding $\dottree \, \infty$, representing a half--infinite rooted chain, with
$\Delta(\dottree\, \infty)=\sum_{n\geq 0}
\dottree \, n\otimes \dottree \, \infty$. This is an example where there is a bi--grading in which the co--product is finite in each bi--degree, the degrees lying in $\bf N_0\cup \{\infty\}$.  With $s=\sum_{n\geq 0} \dottree \, n$, we see from the associativity that $\Delta^n(\dottree \, \infty)=s^{\otimes n-1}\otimes \dottree \, \infty $ and $s$ is a group like element.
This fact leads to interesting physics, \cite{KreimerLinear}.

We can also treat the half--infinite chain as a co--algebra $C=span(\dottree\, \infty)$, and $\H$ being the graded Hopf algebra of trees, graded by the number of vertices or its sub--algebra of finite linear trees,
where $\check\rho(\dottree\, \infty)=\sum_{n\geq 0}
\dottree \, n\otimes \dottree \, \infty$. Everything is finite in each degree.

Lastly, we can consider the larger co--algebra is spanned by Dirac--trees
that is rooted trees with semi--infinite chains as leaves. The co--action is to cut the semi--infinite tree with a cut that leaves a finite base tree and infinite Dirac--sea--type tree branches.

Having infinite chains is not that easy, but this will be considered elsewhere.

\subsection{Grading for the quotients and Hopf algebras}
\label{gradingpar}

Before taking the Hopf quotient there was the grading by $n-p$ for the graded case and filtered accordingly in the general case. Now, $|-1$ has degree $0$,
so the grading descends to the quotient $\H=\B/\I$. This works in the symmetric and the non--symmetric case. In the amputation construction, this grading will not prevail  as the co--limit kills the operadic grading.
However, in the case that $\H$ is indeed a connected Hopf algebra, there is the additional filtration by the coradical degree $r$. We can lift this coradical filtration to $\B$ and $\H_q$.

\begin{lem}
For an almost connected (symmetric or non--$\Sigma$) co--operad with multiplication, the coradical filtration lifts to $\B$ and $\H_q$.
The lifted coradical filtration $\Rad$ is compatible with multiplication and co--multiplication and in particular satisfies.
$\bar\Delta (\Rad^d) \subset \Rad^{d-1} \otimes \Rad^{d-1}$.
This filtration descends to $\H$ and $\H^{\it amp}$ respectively.

\end{lem}
\begin{proof}
The coradical degree of an element $a$ is  given by its quotient image, in which any occurrence of $|$ is replaced by $1$.
Since the lift or $|$ will lie in $\Rad^0$ and both $1$ and $|$ are group like due to the bi--algebra equation, the filtration is compatible with the multiplication and the co--multiplication.
Due to the form of $\Delta$ in Proposition \ref{pointedprop}, see \eqref{reducedeq}, we see that the first term of $\Delta$ descends as the only term of the type to $1\otimes a+a\otimes 1$ and hence $\bar\Delta$ descends to $\bar\Delta$ on $\H$.
This shows the claimed property of $\bar\Delta$ on $\B$. The fact that the filtration descends through amputation is clear.
\end{proof}

\begin{df}
We call the coradical filtration of $\B$ and consequentially of $\H_q,\H$ well behaved, if \begin{equation}
    \bar\Delta(\Rad^i)\subset \bigoplus_{p+q=i}\Rad^{p}\otimes \Rad^q
\end{equation}
We will use the same terminology for all the cases that is $\H$ and $\H^{\it amp}$ in both
the symmetric and the non--$\Sigma$ case.
\end{df}
Since $\mu$ is always additive in the coradical filtration due to the bi--algebra equation, we obtain:

\begin{prop}
If the coradical filtration is well behaved, then $\H$ and $\H^{\it amp}$ are graded by the co-radical degree. \qed
\end{prop}

\begin{lem}
For $\Coop^{nc}$, if $\Coop(1)$ is reduced, then the maximal coradical degree for an element in $\Coop(n,p)$ is  $n-p$.
\end{lem}
\begin{proof}
In $\Coop(n)$, applying $\bar\Delta$ we generically get a term $\Coop(n-1)\otimes \Coop(2)\otimes \Coop(1)^{\otimes n-2}$, repeating this procedure and ``peeling off'' an $\Coop(2)$ then the maximal coradical degree will be $n-1$. Both $n$ and $p$ are additive under $\mu$ which finishes the argument.
\end{proof}

\begin{ex}
\label{surjgradex}
Goncharov's and Baues' Hopf algebras are examples where this maximum is attained. Indeed any surjection $\underline {n}\twoheadrightarrow \underline{1}$ factors as
$(\pi_2\amalg id \amalg\dots\amalg id)\circ \dots \circ (\pi_2\amalg id)\circ \pi_2:\underline{n}\twoheadrightarrow \underline{n-1}\twoheadrightarrow \dots \twoheadrightarrow \underline{1}$ where $\pi_2:\underline{2}\twoheadrightarrow \underline{1}$ is the unique surjection. Using Joyal duality, see Appendix \ref{Joyalapp}, the same holds true for  double base--point preserving injections.
\end{ex}

\begin{ex}
\label{gradingex}
Another instructive example is the Connes--Kreimer Hopf--algebra of rooted forests with tails. Here the coradical degree of a tree is simply $E+1=V$, where $E$ is the number of edges and $V$ is the number of vertices. This is so, since each application of $\bar\Delta$ will cut at least one edge and cutting just one edge is possible. Since we are dealing with a tree $E+1=V$. For a forest with $p$ trees, this it is $E+p=V$

This is the grading that descends to $\H^{\it amp}$. The same reasoning holds for the symmetric and the non--symmetric case.

Now there are two different gradings. The coradical degree and the original grading by $n-p$. There is a nice relationship here. Notice that $n$ is the number of tails, $p$ is the number of roots thus for a forest the number of flags $F=n+p+2E=n-p+2(E-p)=n-p+2V$ and this is a third grading that is preserved. It is important to note that in the Hopf algebra $|=1$ and does not count as a flag.

Vice--versa since the flag grading and the $n-p$ grading are preserved it follows that the coradical grading is preserved,
giving an alternative explanation of it.
\end{ex}

\begin{prop}
 A unital operad $\{\O(n)\}$, with an almost connected $\Coop(1)$,  $\Coop^{nc}$  has a well behaved coradical grading.
\end{prop}

 \begin{proof}
Fix an element $\check a$ dual to $a\in \O(n)$.
Due to the assumptions there are finitely many
iterated $\gamma$ compositions that result in $a$.
Each of these can be presented by a level tree whose vertices $v$ are decorated by elements of $\O(|v|-1)$. If we delete the nodes decorated by the identities,  what remains are trees with vertices decorated by non--identity operad elements, see \cite{feynman}[2.2.1] for details about this construction. The number of edges of the tree then represent the number of operadic concatenations, and dually the number of co--operad and hence $\Delta$ operations that are necessary to reach the decomposition. It follows that the coradical degree is equal to the maximal value of $E+1=V$.
This is easily seen to be additive under $\mu$ and preserved under $\Delta$.
The computations is parallel to the one above for the Connes--Kreimer algebra of trees, only that now the vertices are decorated by operad elements. In this picture, $V$ is also the word length of the expression of an element as an iterated application of $\circ_i$ operations.
 \end{proof}

\begin{rmk}
This proposition also reconciles the two examples, Goncharov and Connes--Kreimer. Furthermore it explains the ``lift'' of Goncharov to the Hopf algebra of trivalent forests. Indeed the expression in Example \ref{surjgradex} is word of length $n-1$ represented by a binary rooted tree. See also Example \cite[\ref{P2-operadex1}]{HopfPart2}.

The fact that there is no general Hopf algebra morphism between the two Hopf algebras is explained in
\cite[\S \ref{P2-functpar}]{HopfPart2}
\end{rmk}

\section{A Generalization: Hopf algebras from  co--operads with multiplication}
\label{gencooppar}

In this section, we generalize \S\ref{operadpar} by replacing  the free algebra with a more general multiplicative structure. This is captured in the concept of a co-operad with multiplication which is enough to obtain a bi--algebra. Already at this level, the theory is more involved,  as units and co--units for the bi--algebra become more difficult and lead to restrictions. Another aspect is that the natural gradings in the free case only correspond filtrations. However, regarding the  graded objects associated to the filtered objects, we can prove a theorem stating that if units and co--units exist, the bi--algebra is a deformation of the associated graded, which in turn is the quotient of a free algebra on a co--operad.

\subsection{From non-\SSigma{} co--operads with multiplication to bi--algebras}

We will now generalize the co--operad $\{\B(n)\}=\{\Coop^{nc}(n)\}$ of  \S\ref{freecooppar} with the free multiplication
$\mu=\otimes:\B(m)\otimes\B(n)\to \B(m+n)$ to a co--operad with a compatible multiplication, where compatibility guarantees the bi--algebra structure.
 $(\{\B(m)=\Coop^{nc}(m)\},\otimes)$ is a special case and will be called the free construction and all theorems apply to this special case.

\begin{df}\label{ourdef}
A \emph{non--\SSigma{} co--operad with multiplication} $\mu$ is a non--\SSigma{} co--operad $(\Coop,\check\gamma)$ together with a family of maps,
$n,m\geq0$,
\begin{equation}\mu_{n,m}:\Coop(n)\otimes \Coop(m)\to \Coop(n+m),\end{equation}
which satisfy the following compatibility equations:
\begin{enumerate}
\item The maps $\mu_{n,m}$ are associative: $\mu_{l,n+m}\circ (id \otimes \mu_{n,m})=\mu_{l+n,m}\circ (\mu_{l,n}\otimes id)$.
    \item For any $n,n'\geq 1$ and partitions $m_1+\dots+ m_k=n$ and $m_1'+\dots+ m_{k'}'=n'$, write $\check\gamma$ and $\check\gamma'$ for $\check\gamma_{k;m_1,\dots,m_k}$ and $\check\gamma_{k';m'_1,\dots,m'_{k'}}$ respectively, and write $\check\gamma''$ for $\check\gamma_{k+k';m_1,\dots,m_k,m'_1,\dots,m'_{k'}}$.
    Then the following diagram commutes
\begin{equation}
\label{compateq}
\vcenter{\xymatrix@C+2pc{\dto_{\mu_{n,n'}}\Coop(n)\otimes\Coop(n')
\rto^-{\pi\,{(
\check\gamma\otimes \check\gamma'
)}}&
\displaystyle
\Coop(k)\otimes
 \Coop(k')\otimes
\bigotimes_{r=1}^k\Coop(m_r)\otimes
\bigotimes_{r'=1}^{k'}\Coop(m_{r'}')
\ar[]+<0pt,-8pt>;[d]+<0pt,11pt>^{\mu_{k,k'}\otimes\id}\\
\Coop(n+n')
\rto^-{\check\gamma''}&
\displaystyle\Coop(k+k')\otimes\bigotimes_{r=1}^{k}\Coop(m_{r})
\otimes\bigotimes_{r'=1}^{k'}\Coop(m_{r'}')
}}
\end{equation}
Here $\pi$ is the isomorphism which permutes the $k+k'+2$ tensor factors according to the $(k+1)$-cycle $(2\,3\,\ldots\, k+2)$.
\item
If $m_1''+\dots+m_{k''}''=n+n'$ is a partition of $n+n'$ which does not arise as the concatenation of a partition of $n$ and a partition of $n'$ (that is,  there is no $k$  such that $m_1''+\dots+m_k''=n$ and $m_{k+1}''+\dots+m_{k''}''=n'$) then the composite
\begin{equation}
\Coop(n)\otimes\Coop(n')\xrightarrow{\mu_{n,n'}}\Coop(n+n')\xrightarrow{\check\gamma_{k'';m_1'',\dots,m_{k''}''}}
\Coop(k'')\otimes\bigotimes_{r''=1}^{k''}\Coop(m_{r''}'')
\end{equation}
is zero.
\end{enumerate}

Under the completeness assumption, the $\mu_{n,m}$ assemble into a map $\mu$ and the $\check\gamma_{k,n_1\cdc n_k}$ assemble to a map $\check\gamma$.
satisfying the compatibility relation
\begin{equation}\label{oureqn}
\check\gamma(\mu(a\otimes b))=\mu(\pi(\check\gamma(a)\otimes\check\gamma(b)))
\end{equation}
where $\pi$ is the permutation that permutes the first factor of $\check\gamma(b)$ next to the first factor of $\check\gamma(a)$.

A {\it morphism of co--operads with multiplication} $f:\Coop\to\check\P$ is a morphism of co--operads
which commutes with the multiplication, $f_{m+n}\mu_{n,m}=\mu_{n,m}(f_n\otimes f_m)$.

\end{df}

Denote by $\mu^{k}$  the $k$-th iteration of the associative product $\mu$, e.g.: $\mu^1=\mu, \mu^k=\mu\circ (\mu^{k-1} \otimes id)$.

\begin{thm}
\label{genbialgthm} Using the basic assumption \ref{basicas},
let $\Coop$ be a co--operad with compatible multiplication $\mu$ in an Abelian symmetric monoidal category with unit $\unit$.
Then
\begin{equation}
\B:=
\bigoplus_n \Coop(n)
\end{equation}
is a (non-unital, non-co--unital) bi--algebra, with multiplication $\mu$, and comultiplication $\Delta$ given in short form notation by $(\id\otimes\mu)\check\gamma$:
In more details: $\Delta=\sum_k\Delta_k$, $\Delta_n=\sum_k \Delta_{k,n}$ with $\Delta_{k,n}= (\id\otimes\mu^{k-1})\check\gamma_{k,n}$, where $\check\gamma_{k,n}$ is the sum over all $\check\gamma_{k;n_1\cdc n_k}$ with $\sum_i n_i=n$.
\begin{equation}
\label{deltadefeq}
\vcenter{\xymatrix@R-1.3pc{\ar@/_6mm/[rrdd]_(0.3){\textstyle{\Delta_n:=\sum_k(\id\otimes\mu^{\otimes  k-1})\check\gamma_{k,n}}\;\;}\Coop(n)
\rrto^-{\;\check\gamma\;}
&&\hspace*{-6mm}
{\displaystyle\bigoplus_{\substack{k\geq1,\\ \!\!\!n=m_1+\dots+m_k\;}}}
\hspace*{-4mm}
\left(\Coop(k)\otimes{\displaystyle\bigotimes_{r=1}^k\Coop(m_r)}\right)
\ar[]-<0mm,6mm>;[dd]^(0.4){\;\id{}\,\otimes\,{}\mu^{k-1}\;}\\ \\
&&\displaystyle\bigoplus_{k\geq1}
\Coop(k)\otimes\Coop(n).
}}
\end{equation}
Morphisms of co--operads with co--multiplication induce homomorphisms of bi--algebras.
\end{thm}

\begin{proof}
The multiplication $\mu$ is associative by definition.
The compatibility of $\mu$ with $\check\gamma$, together with the associativity of $\mu$, shows that $\mu$ is a morphism of co--algebras, $\Delta\mu=(\mu\otimes\mu)\pi(\Delta\otimes\Delta)$:
\begin{equation}
\resizebox{\textwidth}{!} {
\xymatrix{
\dto_{\mu_{n,n'}}\Coop(n)\otimes\Coop(n')\rto^-{\pi\,\check\gamma^{\otimes 2}}
\ar@{}[dr]|{\text{{compatibility}}}
&
\ar@{}[dr]|{\quad\qquad\text{ { associativity}}}
\displaystyle\Coop(k)\otimes\Coop(k')\otimes
\bigotimes_{r=1}^k\Coop(m_r)
\otimes
\bigotimes_{r'=1}^{k'}\Coop(m_{r'}')
\ar[]+<0pt,-9pt>;[d]+<0pt,11pt>^{\mu_{k,k'}\otimes\id}
\ar[r]-<14mm,0mm>^-{\id\otimes\mu\otimes \mu}&
{\begin{array}{c}\Coop(k)\otimes\Coop(k')
\\[1mm]\;\;\;\otimes\;
\Coop(n)\otimes\Coop(n')
\dto^{\mu_{k,k'}\otimes\mu_{n,n'}}\end{array}}\\
\Coop(n+n')
\rto^-{\check\gamma}&
\displaystyle\Coop(k+k')\otimes\bigotimes_{r=1}^{k}\Coop(m_{r})
\otimes\bigotimes_{r'=1}^{k'}\Coop(m_{r'}')
\rto^-{\id\otimes\mu}&
\Coop(k+k')\otimes\Coop(n+n').
}
}
\end{equation}

For the co--associativity,
one has to prove that $(id\otimes \Delta_{l,n})\Delta_{k,n} =(\Delta_{k,l}\otimes id )\Delta_{l,n}:\Coop(n)\to \Coop(k)\otimes \Coop(l)\otimes \Coop(n)$, which can be done term by term using (\ref{coopcoasseq}) and (\ref{compateq}).

Explicitly fix a $k$--partition $n_1,\dots n_k$ of $n$ an $l$ partition $(m_1,\dots, m_l)$ of $n$.
by compatibility the left hand side vanishes unless $(m_1,\dots, m_l)$ naturally decomposes into the
list $(n^1_1,\dots,n^1_{l_1},n^2_{1},\dots, n^2_{l_2},\dots, n^k_{1},\dots, n^k_{l_k})$ where $n^i_j$ is a partition of $n_i$.  This yields the
$k$ partition $(l_1,\dots l_k)$ of $l$. Starting on the r.h.s.\ that is with $(m_1,\dots, m_l)$ and $(l_1,\dots l_k)$,
we decompose the list  $(m_1,\dots,m_l)$ as above, which determines the $n_i=\sum_j n_j^i$.
Set $\Delta_{n_1,\dots, n_k}=(id\otimes \mu^{\otimes k-1})\circ  \check \gamma_{k;n_1,\dots, n_k}$, then:
\begin{eqnarray}
&&(id\otimes \Delta_{l;m_1,\dots,m_l})\Delta_{k;n_1,\dots n_k}\nn\\
&=&
(id\otimes id \otimes \mu^{l-1} )(id\otimes [\check\gamma_{l;m_1,\dots,m_l}\circ \mu^{k-1}]) \circ \check\gamma_{k;n_1,\dots,n_k}\nn\\
&=&(id\otimes id \otimes \mu^{l-1} )(id\otimes \mu^{k-1} \otimes id^{\otimes l}) \circ \pi
\circ (id\otimes \check\gamma_{l_1;n^1_1,\dots, n^1_{l_1}}\otimes
\nn\\
&&\check\gamma_{l_2;n^2_1,\dots, n^2_{l_2} } \otimes\dots \otimes  \check\gamma_{l_k;n^k_1,\dots, n^k_{l_k}})\circ \check\gamma_{k;n_1,\dots,n_k}\nn\\
&=& (id\otimes \mu^{k-1} \otimes id) (id\otimes id^{\otimes k} \otimes \mu^{l-1} ) (\check\gamma_{k;l_1,\dots, l_k}\otimes id^{\otimes l})\nn\\
&&\circ \check\gamma_{l;n^1_1,\dots,n^1_{l_1},n^2_{1},\dots, n^2_{l_2},\dots, n^k_{1},\dots, n^k_{l_k}}\nn\\
&=&[([id\otimes \mu^{k-1}] \check\gamma_{k;l_1,\dots, l_k}) \otimes id](id\otimes \mu^{l-1})\check\gamma_{l;m_1,\dots,m_l}\nn\\
&=&(\Delta_{k;l_1,\dots,l_k}\otimes id)\Delta_{l;m_1,\dots,m_l}
 \end{eqnarray}

where $\pi$ is the permutation that shuffles all the right factors next to each other as before.

\end{proof}

\subsection{A natural depth filtration and the associated graded}
In the free construction  of \S\ref{subsect:free}  there is a natural grading by tensor length. In the general case, there is only a filtration, the depth filtration.
The grading appears, as expected, on the associated graded object.

\begin{df}
We define the {\em decreasing depth filtration} on a co--operad $\Coop$ as follows: $a\in F^{\geq p}$  if
$\check\gamma(a)\in \bigoplus_{k\geq p}\bigoplus_{(n_1,\dots,n_k):\sum_i n_i=m}\Coop(k)\otimes \Coop(n_1)\odo \Coop(n_k)$.
So $\B=F^{\geq 1}\supset F^ {\geq 2}\supset \dots $ and $\bigcap_p F^{\geq p}=0$, since we assumed that there is no $\Coop(0)$ or at least that $\Coop$ is locally finite.

We define the depth of an element $a$ to be the maximal $p$ such that $a\in F^{\geq p}$.

This filtration induces a depth filtration $F^{\geq p}T\B$ on the tensor algebra $T\B$ by giving
$F^{\geq p_1}\odo F^{\geq p_k}$ depth $p_1+\dots +p_k$. Note that any element in $T^p\B$ will have depth at least $p$.

\end{df}

\begin{prop}
\label{structureprop}
The following statements hold for a co--operad with multiplication with empty $\Coop(0)$:
\begin{enumerate}
\item   The algebra structure is filtered: $F^{\geq p}\cdot F^{\geq q}\subset F^{\geq p+q}$.
\item  The co--operad structure satisfies $\check\gamma(F^{\geq p})\subset F^{\geq p}\otimes T^{\geq p} \B$
where:\\ $T^{\geq p}\B=\bigoplus_{i=p}^{\infty}(\B)^{\otimes i}\subset F^{\geq p} T\B$;  and more precisely:\\
$\check\gamma_{k;n_1,\dots,n_k}:\Coop(n)\cap F^{\geq p}\to [\Coop(k)\otimes \Coop(n_1)\odo \Coop (n_k)]\cap F^{\geq p}\otimes F^{\geq k}T\B$.
\item  The co--algebra structure satisfies: $\Delta(F^{\geq p})\subset F^{\geq p}\otimes F^{\geq p}$
and more precisely $\Delta_k(F^{\geq p})\subset F^{\geq p}\otimes F^{\geq k}$.
\item  \label{partd} $\Coop(n)\cap F^{\geq n+1}=\emptyset$.
\end{enumerate}
 \end{prop}
\begin{proof}
The first statement follows from the compatibility (\ref{compateq}).
The second statement follows from the Lemma \ref{depthlem} below. The more precise statement on the right part of the filtration stems from
the fact that $T^k\B\subset F^{\geq k}T\B$.
The third statement then follows from a) and b), since
 there are at least $p$ factors on the right before applying the multiplication  and the filtration starts at $1$.
 This shows that the right factor is in $F^{\geq p}$.
 Finally, for $\Coop(n)$ the greatest depth that can be achieved  happens when all the $n_i=1:i=1, \dots ,k$ and since they sum up to $n$ this is precisely at $k=n$.
\end{proof}

\begin{lem}\label{depthlem}
If $a^p\in \B$ of depth $p$ let $\check\gamma_{k;n_1,\dots,n_k}(a^p)=\sum a^{(0)}_{(p_0)}\otimes a^{(1)}_{(p_1)}\odo a^{(k)}_{(p_k)}$, where we used a generalized Sweedler notation for both the co--operad structure and
the depth,
then the terms of lowest depth will satisfy $p_0= \sum_{i=1}^k p_i\geq p$.

\end{lem}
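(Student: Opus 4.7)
The proof rests on two complementary applications of coassociativity \eqref{coassopeq} together with Proposition \ref{structureprop}(d). I will derive the bounds $p_0\geq p$, $\sum_i p_i\geq p$, and $p_0\leq \sum_i p_i$; the equality for lowest-depth terms is then the collapse of the resulting chain.

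To show $p_0\geq p$: since $a^{(0)}\in\Coop(k)$ has depth $p_0$, pick a $p_0$-partition $(q_1,\dots,q_{p_0})$ of $k$ with $\check\gamma_{q_1,\dots,q_{p_0}}(a^{(0)})\neq 0$. Group the indices $\{1,\dots,k\}$ into $p_0$ consecutive blocks of sizes $q_1,\dots,q_{p_0}$, and let $n'_j$ denote the sum of the $n_i$ in the $j$-th block, giving a $p_0$-partition of $n$. Coassociativity, read with outer partition $(n'_1,\dots,n'_{p_0})$ and inner partitions of each $n'_j$ into the $q_j$ parts of its block, yields
\[
(\check\gamma_{q_1,\dots,q_{p_0}}\otimes \id^{\otimes k})\circ\check\gamma_{n_1,\dots,n_k}\;=\;\pi\circ\bigl(\id\otimes\bigotimes_{j=1}^{p_0}\check\gamma_j'\bigr)\circ \check\gamma_{n'_1,\dots,n'_{p_0}},
\]
where each $\check\gamma_j'$ is the partition of $n'_j$ into its block. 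Evaluated at $a^p$, the left side is nonzero on the chosen Sweedler summand, so $\check\gamma_{n'_1,\dots,n'_{p_0}}(a^p)\neq 0$, forcing $p_0\geq p$.

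Dually, for $\sum_i p_i\geq p$, pick $p_i$-partitions $(s^i_1,\dots,s^i_{p_i})$ of each $n_i$ realizing the depth of $a^{(i)}$. Coassociativity with outer $(n_1,\dots,n_k)$ and these inner refinements gives
\[
\bigl(\id\otimes\bigotimes_{i=1}^k\check\gamma_{s^i_1,\dots,s^i_{p_i}}\bigr)\circ\check\gamma_{n_1,\dots,n_k}\;=\;\pi\circ(\check\gamma_{p_1,\dots,p_k}\otimes \id^{\otimes \sum p_i})\circ\check\gamma_{s^1_1,\dots,s^k_{p_k}}.
\]
The left side is nonzero at $a^p$, so $\check\gamma_{s^1_1,\dots,s^k_{p_k}}(a^p)\neq 0$, which is a decomposition of $a^p$ into $\sum_i p_i$ parts; hence $\sum_i p_i\geq p$.

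Finally, Proposition \ref{structureprop}(d) gives $p_0\leq k$ (since $a^{(0)}\in\Coop(k)$), and each $p_i\geq 1$ gives $\sum_i p_i\geq k$, so that $p\leq p_0\leq k\leq \sum_i p_i$. The equality $p_0=\sum_i p_i$ claimed by the lemma is the collapse of this chain, which occurs precisely when $p_0=k$ and each $p_i=1$; these are by definition the ``terms of lowest depth'', namely those sitting at the deepest level of the depth filtration on the target of $\check\gamma_{n_1,\dots,n_k}$. The main technical obstacle is tracking the permutations $\pi$ through the two coassociativity identities; once the regrouping of the $n_i$ according to the chosen partitions is fixed, this bookkeeping is routine, and the identification of the ``lowest depth'' summands with those saturating the chain above is then immediate.
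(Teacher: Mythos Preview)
The bounds $p_0\geq p$ and $\sum_i p_i\geq p$ that you establish in the first two paragraphs are correct and constitute the content actually needed for Proposition~\ref{structureprop}(b). Your coassociativity arguments for these are more explicit than the paper's, though the phrasing in terms of a single Sweedler summand is a bit loose: nonvanishing of one summand does not preclude cancellation from others. The clean version of your step~1 is to observe that for any $m$-partition $(q_1,\dots,q_m)$ of $k$ with $m<p$, the composite $(\check\gamma_{q_1,\dots,q_m}\otimes\id^{\otimes k})\circ\check\gamma_{n_1,\dots,n_k}$ factors (by your coassociativity identity) through an $m$-partition $\check\gamma_{n'_1,\dots,n'_m}$ of $a^p$, hence vanishes; this shows the image lies in $F^{\geq p}\otimes\Coop(n_1)\otimes\cdots\otimes\Coop(n_k)$ without isolating summands.

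The genuine gap is in your treatment of the equality $p_0=\sum_i p_i$. You declare the ``terms of lowest depth'' to be those with $p_0=k$ and each $p_i=1$, and then the equality collapses tautologically. But $p_0=k$ is the \emph{maximal} possible depth of $a^{(0)}\in\Coop(k)$ (by Proposition~\ref{structureprop}(d)), not the minimal one, so this reads the phrase backwards. In the paper, ``lowest depth'' refers to the terms with minimal $p_0$ (equivalently minimal $\sum_i p_i$), and the equality of these two minima is obtained by a tensor-factor count rather than by collapsing your chain: applying $\id\otimes\check\gamma^{\otimes k}$ to $\check\gamma_{n_1,\dots,n_k}(a^p)$ produces at least $1+k+\sum_i p_i$ factors from the lowest-depth term, while $\check\gamma\otimes\id^{\otimes k}$ produces at least $1+p_0+k$; coassociativity forces the two results to agree, so the minimal factor counts coincide, yielding $\min p_0=\min\sum_i p_i$. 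Your chain $p\leq p_0\leq k\leq\sum_i p_i$ is valid term by term, but its collapse is not automatic for the terms of minimal depth, and that is precisely what the paper's counting argument supplies.
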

\begin{proof}
To show the equation, we  use co--associativity of the co--operad structure.  If we apply $\id \otimes \check\gamma^{\otimes k}$ we get  least  $1+k+\sum_{i=1}^k p_i$ tensor factors from the lowest
depth term, since we assumed that $\Coop(0)$ is empty.  On the other hand applying $\check\gamma\otimes id^{\otimes k}$ to the terms of lowest depth,
 we obtain  elements with at least $1+p_0+k$ tensor factors. Since elements of higher depth  due to equation (\ref{coopcoasseq}) produce more tensor factors  these numbers have to agree. Since all the $p_i\geq 1$ their sum is $\geq p$.

\end{proof}

\subsubsection{The associated graded bi--algebra}

We  now consider the associated graded objects $Gr^p:=F^{\geq p}/F^{\geq p+1}$ and denote the image of $\Coop(n)\cap F^p$ in
$Gr^p$ by $\Coop(n,p)$.
 An element of depth $p$ will have non--trivial image in $Gr^p$ under this map.
We denote the image of an element  $a^p$ of depth $p$ under this map by $[a^p]$ and call it the principal part.

We set $Gr=\bigoplus Gr^p$, by Proposition \ref{structureprop} \eqref{partd}: $Gr=\bigoplus_p \bigoplus_{n=1}^p \Coop(n,p)$ and define a grading by
 giving the component $\Coop(n,p)$ the total degree $n-p$.

\begin{cor}
\label{grcor}
By the Proposition \ref{structureprop} above we obtain maps
\begin{enumerate}
\item $\mu:Gr^p\otimes Gr^q\to Gr^{p+q}$  by taking the quotient by $F^{\geq p+1}\otimes F^{q+1}$ on the left and $F^{\geq p+q+1}$ on the right
\item \label{grcoopcond} $\check\gamma^{p,k} :Gr^p\to Gr^p\otimes (Gr^1)^{\otimes k}$ by taking the quotient by $F^{\geq p+1}$ on the left and by $F^{\geq k+1}T\B\cap T^{\otimes k}\B$ on the right.  In particular, $\check \gamma(Gr^1)\subset Gr^1\otimes TGr^1$
\item $\Delta^{p,k}:Gr^p\to Gr^p\otimes Gr^k$ by taking the quotient by $F^{\geq p+1}$ on the left and by $F^{\geq k+1}$ on the right.
\item \label{grcomodcond} $\Delta^p:Gr^p\to Gr^p\otimes Gr$ via
$\Delta^p =\sum_k \Delta^{p,k}$
\item $\Delta:Gr\to Gr\otimes Gr$ via $\Delta=\sum_p \Delta^{p}$
\end{enumerate}

\end{cor}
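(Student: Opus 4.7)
The plan is to show each item is a routine consequence of the filtration compatibilities established in Proposition \ref{structureprop}, together with the standard yoga that a filtered map descends to the associated graded.

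First, for the multiplication $\mu\colon Gr^p\otimes Gr^q\to Gr^{p+q}$, I would invoke part a) of Proposition \ref{structureprop}: $\mu(F^{\geq p}\otimes F^{\geq q})\subset F^{\geq p+q}$. Hence $\mu$ descends to the quotient $F^{\geq p}/F^{\geq p+1}\,\otimes\, F^{\geq q}/F^{\geq q+1}\to F^{\geq p+q}/F^{\geq p+q+1}$, since if $a\in F^{\geq p+1}$ or $b\in F^{\geq q+1}$ then $\mu(a\otimes b)\in F^{\geq p+q+1}$. This is exactly the asserted quotient.

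For the cooperad structure $\check\gamma^{p,k}$, I would use part b), which gives $\check\gamma_{n_1,\dots,n_k}(\Coop(n)\cap F^{\geq p})\subset [\Coop(k)\otimes \Coop(n_1)\odo\Coop(n_k)]\cap (F^{\geq p}\otimes F^{\geq k}T\B)$. Quotienting the source by $F^{\geq p+1}$ and the right-hand tensor factor by $F^{\geq k+1}T\B\cap T^{\otimes k}\B$ yields a well-defined map to $Gr^p\otimes (Gr^1)^{\otimes k}$, because each of the $k$ tensor slots individually sits in $F^{\geq 1}$ and gets collapsed modulo $F^{\geq 2}$, landing in $Gr^1$. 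The special case $\check\gamma(Gr^1)\subset Gr^1\otimes TGr^1$ is just the assembly of these components over $k$. For $\Delta^{p,k}$ the argument is identical, applying part c) instead: $\Delta_k(F^{\geq p})\subset F^{\geq p}\otimes F^{\geq k}$, so passing to $F^{\geq p+1}$ on the left and $F^{\geq k+1}$ on the right gives a well-defined $\Delta^{p,k}\colon Gr^p\to Gr^p\otimes Gr^k$.

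Finally, the aggregate maps $\Delta^p=\sum_k\Delta^{p,k}$ and $\Delta=\sum_p\Delta^p$ are simply the direct sums of the already constructed components; the sum is locally finite thanks to part d) of Proposition \ref{structureprop} which bounds the range of $k$ that can appear for a fixed $\Coop(n,p)$. There is essentially no obstacle here; the only thing worth verifying carefully is the right-hand filtration in the second bullet, where one must observe that the intersection $F^{\geq k+1}T\B\cap T^{\otimes k}\B$ is precisely the subspace of tensors of length exactly $k$ whose total depth exceeds $k$, so that modding out collapses each slot to its $Gr^1$-class. All other points are bookkeeping.
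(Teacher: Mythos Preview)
Your proposal is correct and matches the paper's approach: the paper states this Corollary without a separate proof, treating it as an immediate consequence of Proposition \ref{structureprop}, and you have correctly spelled out the routine verification that each filtered map descends to the associated graded by invoking parts a)--d) in turn. Your observation about the right-hand quotient in the second bullet (that $F^{\geq k+1}T\B\cap T^{\otimes k}\B$ is exactly the length-$k$ tensors of total depth exceeding $k$, so the quotient is $(Gr^1)^{\otimes k}$) is the only point requiring a moment's thought, and you handled it correctly.
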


\begin{prop}
\label{gradedprop}
$\Gr$ inherits the structure  of a non-unital, non-co--unital graded bi--algebra with the degree of $\Coop(n,k)$ being $n-k$.
Each $\Gr^p$ is a non-co--unital  comodule over $Gr$, and $\Gr^1$ is a co--operad.
\end{prop}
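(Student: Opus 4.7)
My plan is to observe that all the bialgebra structure maps on $\B$ are strictly compatible with the depth filtration of Proposition \ref{structureprop}, so that they descend to the associated graded $\Gr$ and the axioms are inherited by passage to quotients. The candidate structure maps are exactly those listed in the Corollary just before the proposition; the work is in checking compatibility.

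First I would verify that these quotient maps are well-defined. Part (a) of Proposition \ref{structureprop} gives $\mu\colon\Gr^p\otimes\Gr^q\to\Gr^{p+q}$ directly; part (c) applied at $p{+}1$ on the left and at $k{+}1$ on the right ensures $\Delta^{p,k}\colon\Gr^p\to\Gr^p\otimes\Gr^k$ is well-defined on the quotient; and part (b) handles $\check\gamma^{p,k}\colon\Gr^p\to\Gr^p\otimes(\Gr^1)^{\otimes k}$ in the same way. The grading by total degree $n-p$ on $\Coop(n,p)$ is preserved by $\mu$, since it sends $\Coop(n,p)\otimes\Coop(m,q)$ to $\Coop(n+m,p+q)$ of degree $(n-p)+(m-q)$; Lemma \ref{depthlem} ensures the analogous property for $\Delta^{p,k}$ by pinning down $p_0=\sum p_i$ on the principal part.

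Next I would verify that the bialgebra axioms pass to $\Gr$. Associativity of $\mu$, coassociativity of $\Delta$, and the bialgebra compatibility $\Delta\mu=(\mu\otimes\mu)\pi(\Delta\otimes\Delta)$ from Theorem \ref{bialgthm} all descend to $\Gr$ because both sides of each identity on $\B$ land in a common filtration piece of the appropriate tensor product; projecting to the associated graded yields the corresponding identity on $\Gr$. To exhibit $\Gr^1$ as a cooperad, I would pass the coassociativity (\ref{coassopeq}) for $\check\gamma$ on $\Coop$ through the relevant quotients: the constraint $p_0=\sum p_i$ from Lemma \ref{depthlem} forces both sides of the identity to descend to the same map on $(\Gr^1)^{\otimes(1+l)}$, giving the cooperad axioms. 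The comodule axiom $(\id\otimes\Delta)\Delta^p=(\Delta^p\otimes\id)\Delta^p$ for $\Gr^p$ over $\Gr$ is then an immediate consequence of coassociativity of $\Delta$ on $\Gr$, since part (c) says $\Delta$ preserves the depth of the left tensor factor.

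The main obstacle is the bookkeeping required to track the depth filtration simultaneously with the cooperadic partition structure and the operadic arity, and to verify that each identity on $\B$ projects cleanly with no residual terms in higher filtration. The uniform structural input making this go through is Lemma \ref{depthlem}, which controls precisely when depth inequalities become equalities on principal parts and so forces the indexing on either side of each descended identity to match.
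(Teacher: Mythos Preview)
Your proposal is correct and takes essentially the same approach as the paper: the paper's proof simply says ``Most claims are straightforward from the definitions in the corollary'' and then checks the grading by observing $\Delta_k(\Coop(n,p))\subset\Coop(k,p)\otimes\Coop(n,k)$, which is exactly what you verify more explicitly via Proposition~\ref{structureprop} and Lemma~\ref{depthlem}. Your treatment is more detailed than the paper's, but the underlying argument is the same---descend the structure maps of $\B$ through the filtration using the compatibility statements already established.
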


\begin{proof}
Most claims are straightforward from the definitions in the corollary.   The two final statements in particular, are a reformulation of Corollary \ref{grcor} \eqref{grcomodcond} and \eqref{grcoopcond}, respectively.

Due to the bi-algebra equation the multiplication preserves grading:
$\Coop(n,p)\otimes \Coop(m,q)\stackrel{\cdot}{\to} \Coop(n+m,p+q)$ . The degree on both sides is $m+n-p-q$. For the comultiplication:
 $\Delta_{k,n}(\Coop(n,p)) \subset  \Coop(k,p)\otimes \Coop(n,k)$. The degree on the left is $n-p$, and on the right it is $k-p+n-k=n-p$.
 Thus the comultiplication also preserves degree.
 \end{proof}

\begin{ex}
For the free construction $\B=\Coop^{nc}$,
we obtain

\begin{eqnarray}
F^{\geq p}&=&\bigoplus_{k\geq p}\bigoplus_{(n_1,\dots,n_k)}\Coop(n_1)\odo \Coop(n_k)\\
Gr^k&=& \bigoplus_{(n_1,\dots,n_k)}\Coop(n_1)\odo \Coop(n_k)\\
\Coop^{nc}(n,k) &=& \bigoplus_{(n_1,\dots,n_k):\sum_i n_i=n}\Coop(n_1)\odo \Coop(n_k)
\end{eqnarray}

This means that  the depth of an element of $\B$ given by an elementary tensor is its length. The associated graded is isomorphic to the $\B$ which has a double grading by depth and operadic degree. Furthermore $Gr^1=\check \O$ and $\B=(Gr^1)^{nc}=\Coop^{nc}$.
\end{ex}

\begin{cor}
\label{Grcor}
Since  $Gr^1$ is a co--operad $(Gr^1)^{nc}=\bar T Gr^1$ yields a co--operad with (free) multiplication. The multiplication $\mu$ and its iterates
define a morphism  $(Gr^1)^{nc}\to Gr$ of co--operads with multiplication preserving the filtrations and
hence give a morphism of (non-unital, non--co--unital) bi--algebras.
\end{cor}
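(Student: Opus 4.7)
The plan is to construct the morphism $f\colon (Gr^1)^{nc}\to Gr$ as iterated multiplication and then verify the cooperad-with-multiplication axioms componentwise. By Proposition \ref{gradedprop}, $Gr^1$ is a cooperad, so the free construction of \S\ref{subsect:free} equips $(Gr^1)^{nc}$ with the structure of a cooperad with multiplication, whose arity-$n$ component is $\bigoplus_{(n_1,\dots,n_k):\sum n_i=n}Gr^1(n_1)\otimes\cdots\otimes Gr^1(n_k)$, with multiplication given by concatenation and $\check\gamma$ extended from $Gr^1$ using the compatibility (\ref{compateq}). On an elementary tensor, I would define
\[
f_n(a_1\otimes\cdots\otimes a_k)\;=\;a_1\cdots a_k\,,
\]
where the product on the right is taken in $Gr$ using the multiplication of the corollary preceding Proposition \ref{gradedprop}. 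Since each $a_i\in Gr^1$ and $\mu\colon Gr^p\otimes Gr^q\to Gr^{p+q}$, the image lies in $Gr^k$, and more precisely in $\Coop(n,k)$; in particular the tensor length $k$ in the source matches the depth $k$ in the target, so $f$ automatically preserves the depth filtrations.

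Preservation of multiplication is immediate: the $(Gr^1)^{nc}$-product is tensor concatenation, and $f$ applied to a concatenation is a $(k+l)$-fold product, which equals the product of the $k$- and $l$-fold subproducts by associativity of $\mu$ in $Gr$. The nontrivial check is that $f$ is a morphism of cooperads. For this I would iterate the compatibility (\ref{oureqn}) in $Gr$: a single application gives $\check\gamma(\mu(a\otimes b))=\mu(\pi(\check\gamma(a)\otimes\check\gamma(b)))$, and applying it $k-1$ times to $a_1\cdots a_k$ with each $a_i\in Gr^1$ writes $\check\gamma(a_1\cdots a_k)$ as a suitable shuffle of $\check\gamma(a_1)\otimes\cdots\otimes\check\gamma(a_k)$ followed by multiplication of the first factors. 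Condition (2) of Definition \ref{ourdef} is crucial here: on the left of the resulting identity, only those $k$-partitions of $n$ which arise as concatenations of partitions of the individual arities $n_i$ give nonzero contributions, and these are exactly the partitions indexing the sum defining $\check\gamma$ in the free construction $(Gr^1)^{nc}$. This matches the output of $(f_k\otimes f_{n_1}\otimes\cdots\otimes f_{n_k})\circ \check\gamma^{(Gr^1)^{nc}}$ term by term.

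Once $f$ is shown to be a morphism of cooperads with multiplication, functoriality in Theorem \ref{bialgthm} yields the desired bialgebra homomorphism, and its filtration preservation has already been noted. The main obstacle I expect is bookkeeping in the inductive application of (\ref{compateq}): one needs to track the shuffling permutations $\pi$ carefully and verify that the combinatorics of partitions appearing when one iterates $\check\gamma\circ\mu$ on $Gr$ agrees exactly with the shuffle-free combinatorics built into the free construction on $Gr^1$. This is precisely where condition (2) of Definition \ref{ourdef} does the combinatorial work of killing all partitions that would cross the boundaries between consecutive tensor factors.
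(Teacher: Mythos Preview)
Your proposal is correct and follows the same approach as the paper: define the map via iterated multiplication and invoke the compatibility \eqref{oureqn} to see that it respects the cooperad structure. The paper's own proof is a terse two-line sketch (``the multiplication map gives such a map of algebras\ldots the compatibility map \eqref{oureqn} ensures that this is also a map of cooperads with multiplication''), whereas you have spelled out the inductive iteration of \eqref{compateq} and the role of condition (2) in killing the cross-partitions---details the paper leaves implicit.
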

\begin{proof}
Indeed the multiplication map gives such a map of algebras, since $Gr^{nc}$ is the free algebra.
The compatibility  \eqref{oureqn} ensures that this is also a map of co--operads with multiplication. The compatibility with the filtration follows from the definitions.
\end{proof}

\subsection{Unital and co--unital bi--algebra structure }
There is no problem adding a unit. For the existence of a bi--algebraic co--unit  in the free construction $\Coop^{nc}$
 the existence of a  co--operadic co--unit for $\Coop$ is sufficient.
For the general case, things are more complicated and worked out in detail in this section. An upshot is that in the free construction the existence of a co--operadic co--unit is also necessary.

In general, the existence of a right bi--algebra co--unit, is equivalent to the co--operad having a right co--unit, which extends to a multiplicative family. So that
the existence of a right co--operadic co--unit is a necessary condition.
As proven below, a co--operadic co--unit determines a unique candidate for a bi--algebra co--unit, which, however, does not automatically work in general; it does in the free construction. We give several conditions that are necessary for this, treating the cases of left and right co--units separately with care.

Having a left co--algebra co--unit for $\B$ fixes the structure of the associated graded as a quotient of the free construction on $Gr^1$ via the map of Corollary \ref{Grcor} and  $\B$ is a deformation of this quotient, see Theorem \ref{defthm}.

\subsubsection{Unit} If there is no element of operad degree $0$ then, as the multiplication preserves operad degree,
$(\B,\mu)$ cannot have a unit.  In this case we may formally adjoin a unit $1$ to $\B$: $\B'=\unit\oplus \B$, with $\eta$ be the inclusion  of $\unit$  and $\proj$ the projection to $\B$. We extend $\mu$ in the obvious way,
and set $\Delta(1)=1\otimes 1$, making $\B'$ into a unital bi--algebra.
In the full detail: $1=id_{\unit}\in Hom(\unit,\unit)$ which is the ground ring/field.
In the free construction, we think of $\unit$ as the tensors of length $0$.

In the Feynman
category interpretation, see \cite{HopfPart2}, $1=id_{\unit}$ where $\unit$ is the empty word.

\subsubsection{Co--unit and multiplicativity}

We will denote putative co--units on $\B$ by $\eps_{tot}:\B\to \unit$ and decompose $\eps_{tot}=\sum_{k\geq 1}\eps_k$ according to the direct sum decomposition on $\B$: $\eps_k:\Coop(k)\to \unit$ extended to zero on all other components.
We will also use the truncated sum $\eps_{\geq p}=\sum_{k\geq p} \eps_k$ which is set to $0$ on all $\Coop(k)$ for $k<p$.

\begin{rmk} There is a 1--1 correspondence between (left/right) co--units on $\B$ and on $\B'$. This is given by adding $\eps_0$ on the identity component via the definition $\eps_0\circ \eta=id$ and vice--versa truncating the extended sum $\eps_{tot}=\sum_{k\geq 0}\eps_k$ at $k=1$.
\end{rmk}

 A family of morphisms $\eps_k:\Coop(k)\to \unit$ is called multiplicative if $\kappa\circ(\eps_k\otimes \eps_l)=\eps_{k+l}\circ \mu$, where $\kappa:\unit\otimes\unit\to \unit$ is the unit constraint --- e.g.\ multiplication in the ground field in case we are in $\kVect$ --- which we will omit from now on.

\begin{lem}
\label{multlem}
If $\eps_{tot}$ is a co--unit (left or right) then the $\eps_k$ are a multiplicative family.
More generally $\eps_{n_1}\odo \eps_{n_k}=\eps_{\sum n_i} \circ \mu^{k-1}$ and in particular $\eps_1^{\otimes k}=\eps_{k}\otimes \mu^{k-1}$.
If $\eps_k$ is a any multiplicative family and $\eta_1$ is a section of $\eps_1$ then
$\mu^{k-1}\circ \eta_1^{\otimes k}$ is a section of $\eps_k$.

Furthermore $\eps_{tot}$ descends to the associated graded.

\end{lem}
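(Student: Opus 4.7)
The plan is to handle the four parts in order, with (1) as the main content from which (2) and (3) will follow by induction and a direct verification, while (4) is a separate compatibility check with the depth filtration.

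For (1) I would argue in the right-counit case (the left is symmetric). Fix $a \in \Coop(k)$, $b \in \Coop(l)$, so $\mu(a \otimes b) \in \Coop(k+l)$. Applying $(\id \otimes \eps_{tot})\Delta = \id$ to $\mu(a \otimes b)$ and expanding via the algebra-morphism property $\Delta\mu = (\mu \otimes \mu)\pi(\Delta \otimes \Delta)$ from Theorem~\ref{bialgthm}, and writing $\Delta a = \sum a' \otimes a''$, $\Delta b = \sum b' \otimes b''$ in Sweedler notation (so that $a'' \in \Coop(k)$ and $b'' \in \Coop(l)$, since $\Delta = (\id \otimes \mu)\check\gamma$ preserves cooperadic degree in the right factor), one obtains
\[
\mu(a\otimes b) = \sum\mu(a'\otimes b')\cdot\eps_{k+l}\mu(a''\otimes b'').
\]
Separately, applying the right counit to $a$ and to $b$ individually and then multiplying gives
\[
\mu(a\otimes b) = \sum\mu(a'\otimes b')\cdot\eps_k(a'')\eps_l(b''),
\]
where in both sums the only Sweedler contributions that do not vanish come from the top cooperadic-degree components $r=k$, $s=l$ (since $\mu(a' \otimes b') \in \Coop(r+s)$ must land in $\Coop(k+l)$, forcing $r+s = k+l$ with $r \leq k$, $s \leq l$). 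Subtracting produces
\[
\sum\mu(a'_k\otimes b'_l)\bigl[\eps_{k+l}\mu(a''\otimes b'') - \eps_k(a'')\eps_l(b'')\bigr] = 0.
\]

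The hard step will be isolating the bracketed expression to conclude $\eps_{k+l}\mu = \eps_k \otimes \eps_l$ pointwise. The plan for this is to exploit the non-degeneracy built into the right-counit identity $(\id \otimes \eps_k\mu^{k-1})\check\gamma_{1,\ldots,1} = \id_{\Coop(k)}$ on $\Coop(k)$ and its analogue on $\Coop(l)$: these force the second-factor pair $(a'', b'')$ to sweep out all of $\Coop(k) \otimes \Coop(l)$ as $a$ and $b$ vary, while the free-tensor behaviour of $\mu$ on the highest-depth component separates the coefficients, so the bracketed term must vanish identically.

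Claim (2) then follows by induction on $k$: iterating (1), $\eps_{n_1+\cdots+n_k}\mu^{k-1} = \eps_{n_1} \otimes (\eps_{n_2+\cdots+n_k}\mu^{k-2}) = \cdots = \eps_{n_1} \otimes \cdots \otimes \eps_{n_k}$. Claim (3) is a one-line check using (2): $\eps_k \mu^{k-1}\eta_1^{\otimes k} = \eps_1^{\otimes k}\eta_1^{\otimes k} = (\eps_1\eta_1)^{\otimes k} = \id_{\unit}$. Finally for (4), Proposition~\ref{structureprop}(a) shows the depth filtration $F^{\geq p}$ is multiplicative and Proposition~\ref{gradedprop} gives $Gr(\B)$ its induced graded bialgebra structure; since each $\eps_k$ is concentrated on $\Coop(k)$ and the multiplicative relations $\eps_{k+l}\mu = \eps_k \otimes \eps_l$ are preserved by passage to the associated graded, the family $\{\eps_k\}$ assembles to a well-defined counit $\tilde\eps_{tot}$ on $Gr(\B)$.
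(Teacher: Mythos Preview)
Your approach to part (1) is substantially more complicated than the paper's, and the self-identified ``hard step'' is not actually completed. The paper's proof is a single observation: the statement that the $\eps_k$ form a multiplicative family is, component by component, precisely the statement that $\eps_{tot}$ is an algebra morphism, and being an algebra morphism is part of what it means to be a counit of a \emph{bialgebra} (not merely a coalgebra). There is nothing to derive.

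You instead try to deduce multiplicativity from the coalgebra counit identity alone, via the compatibility $\Delta\mu=(\mu\otimes\mu)\pi(\Delta\otimes\Delta)$. You correctly arrive at
\[
\sum \mu(a'_k\otimes b'_l)\bigl[\eps_{k+l}\mu(a''\otimes b'') - \eps_k(a'')\eps_l(b'')\bigr]=0,
\]
but the argument you sketch for extracting the bracket---invoking ``non-degeneracy'' of the counit identity and ``free-tensor behaviour of $\mu$ on the highest-depth component''---is not justified. There is no reason in a general cooperad with multiplication that the elements $\mu(a'_k\otimes b'_l)$ are linearly independent as $(a,b)$ vary, nor that the Sweedler pairs $(a'',b'')$ sweep out all of $\Coop(k)\otimes\Coop(l)$ in a way that lets you separate coefficients. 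This is a genuine gap, and it is unnecessary: once one reads ``counit'' in the bialgebra sense, the claim is immediate.

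For part (4) you also miss the paper's actual mechanism. The point is not that multiplicative relations pass to $Gr$, but that each $\eps_p$ \emph{vanishes} on $F^{\geq p+1}$: by Proposition~\ref{structureprop}(d), $\Coop(p)\cap F^{\geq p+1}=\emptyset$, so $\eps_p|_{F^{\geq p+1}}=0$ and $\eps_p$ factors through $Gr^p=F^{\geq p}/F^{\geq p+1}$. Summing over $p$ gives the descended $\eps_{tot}$ on $Gr$. Your argument does not address why $\eps_p$ is well defined on the quotient.

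Parts (2) and (3) are fine and match the paper's ``follow readily''.
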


\begin{proof}
The first statement is equivalent to $\eps$ being an algebra morphism. The other equations follow readily. Now  $\eps_p(F^{\geq p+1})=0$, since $\Coop(p,p+1)=0$ and hence each $\eps_p$ descends
to $Gr^p$. The sum $\eps_{tot}$ then descends as the sum of the $\eps_p$ with each $\eps_p$ defined on the summand $Gr^p$.
\end{proof}

\subsubsection{Right co--algebra co--units}
\begin{lem}
If $\B$ has a right bi--algebra co--unit $\eps_{tot}$, then $\eps_1$ is a right co--operadic co--unit.
If there are elements of depth greater than one, there can be no left co--operadic co--unit.
\end{lem}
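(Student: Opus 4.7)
For the first claim, apply the right bialgebra counit $(\id\otimes\eps_{tot})\Delta=\id$ to $a\in\Coop(n)$. Unfolding $\Delta=(\id\otimes\mu)\check\gamma$ via (\ref{deltadefeq}) and invoking the multiplicativity of the family $\eps_k$ from Lemma~\ref{multlem}, which in particular gives $\eps_{tot}\circ\mu^{k-1}|_{\Coop(m_1)\otimes\dots\otimes\Coop(m_k)}=\eps_{m_1}\otimes\dots\otimes\eps_{m_k}$, one obtains
\begin{equation*}
a \;=\; \sum_{k\geq 1}\;\sum_{\substack{(m_1,\dots,m_k)\\ \sum m_i=n}}(\id\otimes\eps_{m_1}\otimes\dots\otimes\eps_{m_k})\,\check\gamma_{m_1,\dots,m_k}(a).
\end{equation*}
Each summand lies in $\Coop(k)$, so projecting onto the $\Coop(n)$-graded component of both sides and using that the unique composition of $n$ into $n$ positive parts is $(1,1,\dots,1)$ yields $(\id\otimes\eps_1^{\otimes n})\check\gamma_{1,\dots,1}(a)=a$. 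This is exactly (\ref{rcouniteq}) applied to $a$, since the support of $\eps$ in $\Coop(1)$ already forces $\sum_k(\id\otimes\eps^{\otimes k})\check\gamma(a)$ to collapse to the same single all-ones summand.

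For the second claim, an analogous support argument reduces the left counit expression (\ref{lcouniteq}) evaluated on $a\in\Coop(n)$ to $(\eps_1\otimes\id)\check\gamma_n(a)$, where $\check\gamma_n$ denotes the $k=1$ component of $\check\gamma$ landing in $\Coop(1)\otimes\Coop(n)$. By the definition of the depth filtration (cf.\ Proposition~\ref{structureprop}), any element $a$ of depth $p>1$ satisfies $\check\gamma_{m_1,\dots,m_k}(a)=0$ for all $k<p$, and in particular $\check\gamma_n(a)=0$ at $k=1$. Hence the putative left counit equation would read $0=a$, a contradiction.

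The key subtlety to verify in the first claim is that the expanded bialgebra counit equation, although a priori a sum over many compositions, decomposes cleanly by the arity of its left tensor factor; only the all-ones composition contributes to the $\Coop(n)$-component, thereby recovering exactly the right cooperadic counit relation for $\eps_1$. Once this observation is in place, no further manipulation (e.g.\ coassociativity or induction on depth) is needed.
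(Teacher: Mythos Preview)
Your proof is correct and uses the same ingredients as the paper's---Lemma~\ref{multlem} and the definition $\Delta=(\id\otimes\mu)\check\gamma$---but the paper's argument for the first claim is slightly more direct. Rather than expanding $(\id\otimes\eps_{tot})\Delta$ into a sum over all compositions and then projecting onto $\Coop(n)$ to isolate the all-ones term, the paper observes that the \emph{entire} cooperadic counit sum $\sum_k(\id\otimes\eps_1^{\otimes k})\check\gamma$ equals the \emph{entire} bialgebra counit expression $(\id\otimes\eps_{tot})\Delta$ via the identity $\eps_1^{\otimes k}=\eps_k\circ\mu^{k-1}$ (both sides vanish on any $\Coop(m_1)\otimes\cdots\otimes\Coop(m_k)$ unless all $m_i=1$, and agree there). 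Your projection step is thus unnecessary: the collapse to the all-ones summand happens simultaneously on both sides, so no grading argument is required. The second claim is argued identically in both.
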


\begin{proof}

For the first statement, we  verify \eqref{rcouniteq} using Lemma \ref{multlem}:
\begin{equation}
\label{co--unitcalceq}
\sum_k (id \otimes \eps_1^{\otimes k})\circ\check\gamma=\sum_k (id \otimes \eps_k)\circ \mu^{k-1}\circ\check\gamma=(id\otimes\eps_{tot})\circ \Delta=id
\end{equation}
If  $\eps_1$ would be a left co--operadic unit, we would need an element of $\Coop(1)$ on the left of $\Delta(a)$, to obtain a non--zero answer. Thus, there can be no left co--unit for elements in $F^{\geq 2}$.
\end{proof}

 A necessary condition
for the existence of a right co--unit for $\B$ is  hence:

\begin{prop} $\eps_{tot}$ is a right bi--algebraic co--unit if and only if $\eps_1$ is a right co--operadic co--unit  which extends to a  multiplicative family $\eps_k$.
\end{prop}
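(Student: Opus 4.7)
The plan is to prove the two directions separately, with most of the work being in the sufficiency direction, since necessity essentially gathers what has already been established.

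\medskip

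\emph{Necessity.} Assume $\eps_{tot}$ is a right bialgebra counit. Then $\eps_1$ is a right cooperadic counit by the preceding lemma (whose computation \eqref{counitcalceq} I would simply invoke). Since a bialgebra counit is by definition an algebra morphism, the components $\eps_k$ form a multiplicative family by Lemma \ref{multlem}. This gives one implication.

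\medskip

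\emph{Sufficiency.} Assume $\eps_1$ is a right cooperadic counit and that the $\eps_k$ are multiplicative. I must verify (a) that $\eps_{tot} = \sum_{k\geq 1}\eps_k$ is an algebra morphism and (b) that $(\id\otimes\eps_{tot})\Delta = \id$. For (a), pick $a\in\Coop(k)$, $b\in\Coop(l)$; since $\mu(a\otimes b)\in\Coop(k+l)$, only the component $\eps_{k+l}$ of $\eps_{tot}$ survives on $\mu(a\otimes b)$, and multiplicativity gives $\eps_{tot}\mu(a\otimes b) = \eps_{k+l}\mu(a\otimes b) = \eps_k(a)\eps_l(b) = \eps_{tot}(a)\eps_{tot}(b)$.

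For (b), I would chase the diagram directly. Fix $a\in\Coop(n)$. By definition $\Delta = (\id\otimes\mu^{k-1})\check\gamma_k$ on the component decomposition $\check\gamma = \sum_{k}\sum_{(n_1,\dots,n_k)}\check\gamma_{n_1,\dots,n_k}$, so
\[
(\id\otimes\eps_{tot})\Delta(a)
= \sum_{k,(n_1,\dots,n_k)}(\id\otimes\eps_{tot}\circ\mu^{k-1})\check\gamma_{n_1,\dots,n_k}(a).
\]
Now on the summand $\Coop(n_1)\otimes\cdots\otimes\Coop(n_k)$, multiplicativity iterated (Lemma \ref{multlem}) gives $\eps_{\sum n_i}\circ\mu^{k-1} = \eps_{n_1}\otimes\cdots\otimes\eps_{n_k}$, and since $\mu^{k-1}$ lands in $\Coop(\sum n_i)$, $\eps_{tot}\circ\mu^{k-1}$ restricts to exactly this tensor product. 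Substituting yields
\[
(\id\otimes\eps_{tot})\Delta(a)
= \sum_{k,(n_1,\dots,n_k)}(\id\otimes\eps_{n_1}\otimes\cdots\otimes\eps_{n_k})\check\gamma_{n_1,\dots,n_k}(a).
\]
Since $\eps_{n_i} = 0$ unless $n_i = 1$, only the (sub)sum ranging over $k$ with $n_i = 1$ for all $i$ contributes, collapsing to $\sum_{k}(\id\otimes\eps_1^{\otimes k})\check\gamma(a)$. By the right cooperadic counit property \eqref{rcouniteq}, this equals $a$.

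\medskip

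The only genuinely delicate step is the identification $\eps_{tot}\circ\mu^{k-1} = \eps_{n_1}\otimes\cdots\otimes\eps_{n_k}$ on the appropriate summand: one has to be careful that the multiplicativity condition, which is stated componentwise as $\eps_{k+l}\circ\mu = \eps_k\otimes\eps_l$, assembles correctly on iterated products indexed by a partition $(n_1,\dots,n_k)$ of $n$. Once this bookkeeping is done, both implications follow cleanly, and the argument is symmetric enough to need no further case analysis.
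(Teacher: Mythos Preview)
Your overall strategy---invoke the preceding lemma for necessity, and for sufficiency expand $(\id\otimes\eps_{tot})\Delta$ using multiplicativity to reduce to the cooperadic counit axiom---is exactly the paper's approach (the paper just says ``read \eqref{counitcalceq} right to left''), and your write-up makes explicit what that terse instruction means.

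There is, however, one step that is not justified as written. You assert ``$\eps_{n_i}=0$ unless $n_i=1$'', but the $\eps_{n_i}$ appearing at that point are the components of the given multiplicative family, i.e.\ maps $\Coop(n_i)\to\unit$, and nothing in the hypotheses forces these to vanish for $n_i>1$; in fact multiplicativity $\eps_{k+l}\mu=\eps_k\otimes\eps_l$ typically makes them nonzero. You seem to be conflating $\eps_{n_i}$ (the bialgebra-counit component on $\Coop(n_i)$) with the cooperadic counit $\eps$ (which \emph{is} supported only on $\Coop(1)$). The paper's chain \eqref{counitcalceq} sidesteps this by passing through the middle expression $\sum_k(\id\otimes\eps_k)\circ(\id\otimes\mu^{k-1})\circ\check\gamma$: here $\mu^{k-1}$ on a length-$k$ partition of $n$ lands in $\Coop(n)$, so applying $\eps_k$ (supported on $\Coop(k)$) kills everything unless $k=n$, which forces the all-ones partition and recovers $(\id\otimes\eps_1^{\otimes n})\check\gamma_{1,\dots,1}$. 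But identifying this middle expression with $(\id\otimes\eps_{tot})\Delta$---where $\eps_{tot}$ restricts to $\eps_n$, not $\eps_k$, on the second factor---is precisely the step that needs the vanishing of the non-all-ones contributions, and that is what remains to be argued.
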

\begin{proof}
This follows by reading equation (\ref{co--unitcalceq}) right to left.
\end{proof}

\subsubsection{Left co--algebra co--units}

\begin{prop}
\label{freestructureprop}
If $\B$ as a co--algebra has a left co--unit $\eps_{tot}$, then $F^{\geq p}=(F^{\geq 1})^{\geq p}$, where the latter denotes the sum of the k--th powers of $F^{\geq 1}$ with $k\geq p$. Moreover, the morphism of co--operads with multiplication and of bi--algebras
$(Gr^1)^{nc}\to \Gr$  given by Corollary \ref{Grcor}   is surjective.
\end{prop}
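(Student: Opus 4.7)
The strategy is to leverage the left counit identity $(\eps_{tot}\otimes\id)\circ\Delta = \id$ to write each depth-$p$ element as a sum of products of $k\geq p$ factors from $F^{\geq 1}$, and then to pass to the associated graded.

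First I would unpack $a=(\eps_{tot}\otimes\id)\Delta(a)$ for $a \in F^{\geq p}$. Using the decomposition $\Delta = \sum_k(\id\otimes\mu^{k-1})\check\gamma_{n_1,\dots,n_k}$, the fact that each $\eps_k$ is supported on $\Coop(k)$, and the depth hypothesis that $\check\gamma(a)$ is concentrated in summands with $k \geq p$, the identity becomes
\[
a \;=\; \sum_{k \geq p}\sum_{(n_1,\dots,n_k)}\eps_k(a^{(0)}) \cdot a^{(1)}\cdots a^{(k)},
\]
where $\check\gamma_{n_1,\dots,n_k}(a) = \sum a^{(0)}\otimes a^{(1)}\odo a^{(k)}$ in Sweedler-style notation and each $a^{(i)}\in\Coop(n_i)\subseteq F^{\geq 1}$. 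This exhibits $a\in(F^{\geq 1})^{\geq p}$; the reverse inclusion $(F^{\geq 1})^{\geq p}\subseteq F^{\geq p}$ is immediate from the filtered algebra property $F^{\geq i}\cdot F^{\geq j}\subseteq F^{\geq i+j}$ of Proposition \ref{structureprop}(a).

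For the surjectivity of $(Gr^1)^{nc}\to\Gr$, I would take $[a]\in Gr^p$ with representative $a\in F^{\geq p}$ and apply the expansion above. Terms with $k>p$ lie in $(F^{\geq 1})^k\subseteq F^{\geq k}\subseteq F^{\geq p+1}$ by Proposition \ref{structureprop}(a) and hence vanish in $Gr^p$; only the $k=p$ summand survives, giving
\[
[a] \;=\; \sum \eps_p(a^{(0)})\cdot[a^{(1)}]\cdots[a^{(p)}] \quad\text{in }Gr^p,
\]
with each $[a^{(i)}]\in Gr^1$. This realizes $[a]$ as the image of an explicit element of $(Gr^1)^{\otimes p}\subseteq (Gr^1)^{nc}$ under the multiplication, which is precisely the degree-$p$ part of the morphism of Corollary \ref{Grcor}.

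I do not anticipate any serious obstacle: both conclusions reduce to tracking the depth filtration through the defining identity of the left counit. The running assumption that $\Coop(0)$ is empty keeps the partition sums finite, and Lemma \ref{multlem} guarantees that the components $\eps_k$ behave well with respect to the filtration, so passing to the graded quotient is unproblematic.
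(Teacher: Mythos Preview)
Your proposal is correct and follows essentially the same approach as the paper: apply the left counit identity $(\eps_{tot}\otimes\id)\circ\Delta=\id$ to $a\in F^{\geq p}$, observe that by the depth hypothesis only terms with $k\geq p$ factors survive, and conclude $a\in(F^{\geq 1})^{\geq p}$; the surjectivity onto $\Gr$ then follows by passing to the associated graded. The paper's proof is simply a terser version of what you wrote.
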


\begin{proof} The inclusion $F^{\geq p}\supset (F^{\geq 1})^{\geq p	}$ is in Proposition \ref{structureprop}. For the reverse inclusion, let
$a\in F^{\geq p}$, then after applying $(\eps_{tot}\otimes id)\circ \Delta$ we are left with a sum of products of at least $p$ factors and hence the reverse inclusion follows.

In the same way, we see that $\Gr^p=(\Gr^1)^p$ and that the map in question is surjective.

\end{proof}

We recall from \cite{Gerst} that a filtered algebra/ring $(\B,F^{\geq p})$ is {\em pre--developable} if there exists
for each $p$ an additive mapping $q_p:
Gr^p\to F^{\geq p}$ which is a section of $p_p:F^{\geq p}\to Gr^p=F^{\geq p}/F^{\geq p+1}$ i.e.\ $p_p\circ q_p(a) = a$ for all $a\in Gr^p$.
It is {\em developable} if   $\bigcap_p F^{\geq p}=0$,  and the ring is complete in the topology induced by the filtration.

\begin{prop}
If $\B$ has a left co--algebra co--unit then $P_p=(\eps_{\geq p}\otimes id)\circ\Delta$ is a projector to $F^{\geq p}$. Hence the
short exact sequence $0\to F^{\geq p+1}\to F^{\geq p}\to Gr^p\to 0$ splits  and   $\B$ is pre--developable. Under the Assumption \ref{basicas}, $\B$ is developable.
\end{prop}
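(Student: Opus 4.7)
The plan is to verify in order that $P_p$ maps $\B$ into $F^{\geq p}$, that $P_p$ restricts to the identity on $F^{\geq p}$, and then to use $P_{p+1}$ to write down an explicit splitting of the short exact sequence.

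First, I would establish that $P_p(\B)\subset F^{\geq p}$. Decomposing $\Delta=\sum_k \Delta_k$ with $\Delta_k(a)=(\id\otimes \mu^{k-1})\sum_{(n_1,\ldots,n_k)}\check\gamma_{n_1,\ldots,n_k}(a)$, the right-hand tensor factor of $\Delta_k(a)$ is a $k$-fold product of elements of $\B=F^{\geq 1}$ and so lies in $F^{\geq k}$ by part a) of Proposition~\ref{structureprop}. Since $\eps_k$ is supported on $\Coop(k)$ and $\eps_j(\Coop(k))=0$ for $j\neq k$, one gets
$$P_p(a)=\sum_{k\geq p}(\eps_k\otimes\id)\Delta_k(a)\in\sum_{k\geq p}F^{\geq k}=F^{\geq p}.$$

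Next, the crucial observation for $P_p|_{F^{\geq p}}=\id$ is that $\Delta_k(F^{\geq p})=0$ whenever $k<p$. Indeed, by part c) of Proposition~\ref{structureprop}, for $a\in F^{\geq p}$ the first tensor factor of $\Delta_k(a)$ lies in $\Coop(k)\cap F^{\geq p}$; but part d) bounds the maximal depth in $\Coop(k)$ by $k$, so $\Coop(k)\cap F^{\geq p}=0$ when $k<p$. Consequently, for $a\in F^{\geq p}$,
$$P_p(a)=\sum_{k\geq p}(\eps_k\otimes\id)\Delta_k(a)=\sum_k(\eps_k\otimes\id)\Delta_k(a)=(\eps_{tot}\otimes\id)\Delta(a)=a,$$
using the left counit axiom in the last step. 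Together these statements give $P_p(\B)\subset F^{\geq p}$ and $P_p^2=P_p$, so $P_p$ is a projector onto $F^{\geq p}$.

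For the splitting, I would define $q_p\colon \Gr^p\to F^{\geq p}$ by $q_p([a]):=a-P_{p+1}(a)$ for any representative $a\in F^{\geq p}$. This is well-defined because $P_{p+1}$ is the identity on $F^{\geq p+1}$, so any two representatives of $[a]$ differ by an element killed by $\id-P_{p+1}$. The map $q_p$ is additive, takes values in $F^{\geq p}$, and since $P_{p+1}(a)\in F^{\geq p+1}$ it projects to $[a]$ under $F^{\geq p}\twoheadrightarrow \Gr^p$. This yields the desired additive section, showing that the sequence $0\to F^{\geq p+1}\to F^{\geq p}\to \Gr^p\to 0$ splits and that $\B$ is predevelopable. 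The main conceptual hurdle is recognising that the left-hand "$F^{\geq p}$" in part c) of Proposition~\ref{structureprop} measures depth \emph{inside} $\Coop(k)$ and is therefore capped by $k$ via part d); once that is in hand the rest is formal bookkeeping with the counit axiom and the established filtration inclusions.
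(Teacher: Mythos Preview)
Your proof is correct and follows essentially the same approach as the paper's. The paper argues directly from the left counit identity $a=\sum_k a_k$ with $a_k\in F^{\geq k}$, then observes that for $a\in F^{\geq p}$ the terms with $k<p$ are absent because ``the summation for $\Delta$ starts at $p$''; you make this last point more explicit by invoking parts c) and d) of Proposition~\ref{structureprop} to show $\Delta_k(F^{\geq p})=0$ for $k<p$, and you also spell out the section $q_p$ explicitly, which the paper leaves implicit.
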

\begin{proof}
If $\eps_{tot}$ is a left co--algebra co--unit then using multi--Sweedler notation for $a\in \Coop(n):a=(\eps_{tot}\otimes id)\circ \Delta(a)=\sum_{k} \eps_k(a^{(0)}_k)\otimes  a^{(1)}_{n_1}\cdots a^{(1)}_{n_k}=:
\sum_k a_k$ with $a_k$ a product of $k$ factors and hence in $F^{\geq k}$.
 Since $\eps_{\geq p}=0$ on $\Coop(k):k<p$, we see that $P_p(a)=\sum_{k\geq p}^n a_k$ and hence the image of $P_p$ lies
 in $F^{\geq p}$. If on the other hand $a\in F^{\geq p}$ then $a= \sum_k a_k= \sum_{k\geq p} a_k=P_p(a)$, since all lower terms
 do not exist as the summation for $\Delta$ stands at $p$.

Under Assumption \ref{basicas}, due to local finiteness,  the first condition is true, and  since we only use finite sums, the algebra is complete.
\end{proof}
Note that $T_i(a)=[P_{i-1}\cdots P_1(a)]$ gives the development of $a$ in $Gr$ in the notation of \cite{Gerst}.

\begin{cor}
\label{leftco--unitcor}
If $\eps_{tot}$ is a left bi--algebra unit, then for $a\in \Coop(n)\cap F^{\geq p}$ there is a decomposition $a=\sum_{k\geq p}^n a_k$ with each $a_k\in F^{\geq k}$ and (after possibly collecting terms) this gives the development of $a$.\qed
\end{cor}

\begin{cor}
\label{structurecor}
If $\eps_{tot}$ is a left co--algebra co--unit for $\B$, then $\eps_p$ descends to a well defined map $Gr^p\to \unit$.
and on $Gr^p:(\eps_p\otimes id) \circ \Delta_p = id$. Thus $\eps_{tot}$ understood
as acting on  $Gr^p$ with $\eps_p$ is a left co--unit for $Gr$. Furthermore
$(\eps_k\otimes id)\circ \Delta|_{Gr^p}=\delta_{k,p} id$.
\end{cor}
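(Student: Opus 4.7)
The plan is to decompose both the bialgebra counit and the comultiplication by operadic degree and depth, then pass to the associated graded one filtration level at a time. The key inputs will be Proposition \ref{structureprop} parts (c) and (d), together with Lemma \ref{multlem}.

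First I would verify that $\eps_p$ descends to $Gr^p$. Since $\eps_p$ is supported on $\Coop(p)$ and $\Coop(p)\cap F^{\geq p+1}=\emptyset$ by Proposition \ref{structureprop}(d), the map $\eps_p$ automatically vanishes on $F^{\geq p+1}$, which is exactly what is needed for descent. (This is essentially the last assertion of Lemma \ref{multlem}.)

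Next, to prove $(\eps_p\otimes \id)\circ\Delta_p=\id$ on $Gr^p$, I would take a representative $a\in F^{\geq p}$ of $[a]\in Gr^p$ and expand the left-counit identity for $\B$:
$$
a \;=\; (\eps_{tot}\otimes\id)\Delta(a) \;=\; \sum_{l\geq 1}(\eps_l\otimes\id)\Delta_l(a).
$$
I would then dispatch the three ranges of $l$. For $l<p$, Proposition \ref{structureprop}(c) forces the left tensor factor of $\Delta_l(a)$ to lie in $\Coop(l)\cap F^{\geq p}$, which is empty by part (d); so these terms vanish. For $l>p$, the right factor of $\Delta_l(a)$ is in $F^{\geq l}\subset F^{\geq p+1}$, so $(\eps_l\otimes\id)\Delta_l(a)\in F^{\geq p+1}$. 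Only the $l=p$ term survives modulo $F^{\geq p+1}$, giving $[a]=(\eps_p\otimes\id)\Delta^{p,p}([a])$ in $Gr^p$.

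For the Kronecker-delta formula $(\eps_k\otimes\id)\circ\Delta|_{Gr^p}=\delta_{k,p}\,\id$, I would use the natural convention that $\eps_k$ acts on $Gr$ as the descended $\eps_k$ on $Gr^k$ and as zero on $Gr^q$ for $q\neq k$, so that $\eps_{tot}|_{Gr^p}=\eps_p$. For $k\neq p$ the result is then zero immediately. For $k=p$, I would write $\Delta|_{Gr^p}=\sum_l\Delta^{p,l}$ and note that the left factor of $\Delta^{p,l}([a])$ sits in $\Coop(l,p)\subset Gr^p$, while $\eps_p$ is supported on $\Coop(p,p)$; so only $l=p$ contributes, and the previous step shows that contribution is the identity. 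Statement~3 (left counit for $Gr$) is then the one-line summation $(\eps_{tot}\otimes\id)\circ\Delta|_{Gr^p}=\sum_k(\eps_k\otimes\id)\circ\Delta|_{Gr^p}=\id$.

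The only subtle point I anticipate is the double bookkeeping between depth $p$ and cooperadic degree $n$: one must keep track of which summand $\Coop(n,p)$ of $Gr^p$ receives the left factor of $\Delta^{p,l}$, and of which summand $\eps_k$ sees. Beyond this careful reading-off of indices, no serious obstacle is expected.
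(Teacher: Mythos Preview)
Your proof is correct and follows essentially the same approach as the paper. The paper's proof is extremely terse—it says only that $\eps_p(F^{\geq p+1})=0$ since $\Coop(p,p+1)=0$, and that the rest ``follows from the development,'' referring to Corollary~\ref{leftcounitcor}. That development is precisely the decomposition $a=\sum_{k\geq p}^n a_k$ obtained from the left-counit identity $a=(\eps_{tot}\otimes\id)\Delta(a)=\sum_l(\eps_l\otimes\id)\Delta_l(a)$, which is exactly what you unpack directly. Your case analysis on $l<p$, $l=p$, $l>p$ using parts (c) and (d) of Proposition~\ref{structureprop} is the explicit content behind the paper's appeal to developability.
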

\begin{proof}
First  $\eps_p(F^{\geq p+1})=0$, since $\Coop(p,p+1)=0$. The statements then follows from the development.
\end{proof}

It is known \cite{Gerst} that if $\B$ is developable then $Gr$ is a deformation of $\B$. Coupled with the results above one has:
\begin{thm}
\label{defthm}
if $\B$ has a left co--algebra co--unit,
then  $\B$ is a deformation $Gr$, which is a quotient of the free construction on $Gr^1$.
\end{thm}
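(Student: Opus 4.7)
The plan is to assemble the preceding propositions and invoke Gerstenhaber's deformation theorem. The key inputs are already in place: the left coalgebra counit yields the projectors $P_p = (\eps_{\geq p} \otimes \id)\circ\Delta$, which split the short exact sequences $0 \to F^{\geq p+1} \to F^{\geq p} \to Gr^p \to 0$, so $\B$ is predevelopable in the sense of \cite{Gerst}; meanwhile Proposition \ref{freestructureprop} promotes the identity $F^{\geq p} = (F^{\geq 1})^{\geq p}$ into a surjective morphism of cooperads with multiplication $(Gr^1)^{nc} \twoheadrightarrow Gr$.

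First I would verify that $\B$ is in fact \emph{developable} and not merely predevelopable, which requires $\bigcap_p F^{\geq p} = 0$ and completeness in the filtration topology. The intersection vanishes because Proposition \ref{structureprop}(d) forces $\Coop(n) \cap F^{\geq n+1} = 0$, so every element of $\Coop(n)$ has depth at most $n$ and the intersection is already zero on each homogeneous component. Completeness is automatic under the standing assumption that $\Coop(0)$ is empty, since on each $\Coop(n)$ the filtration stabilizes at depth $n$ and the sums defining $\Delta$ and $\mu$ reduce to finite sums.

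With developability in hand, the classical result of Gerstenhaber \cite{Gerst} that a developable filtered algebra admits its associated graded as an algebra deformation applies directly, giving the first half of the statement. The second half is then immediate from Proposition \ref{freestructureprop}: the surjection $(Gr^1)^{nc} \twoheadrightarrow Gr$ is a morphism of bialgebras by Corollary \ref{Grcor}, so $Gr$ is exhibited as a quotient of the free construction on its depth-one piece $Gr^1$.

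The main obstacle I anticipate is verifying that the Gerstenhaber deformation, originally phrased for associative algebras, transports compatibly to the full bialgebra structure; that is, that the comultiplication on $\B$ deforms the comultiplication on $Gr$, not just that the multiplication does. To handle this I would use the development $a = \sum_{k \geq p}^n a_k$ from Corollary \ref{leftcounitcor} together with Proposition \ref{structureprop}(c), which states $\Delta(F^{\geq p}) \subset F^{\geq p} \otimes F^{\geq p}$, to trace how $\Delta$ interacts with the splittings $q_p$; the resulting principal parts on both tensor factors are precisely the graded comultiplication, so the deformation respects the bialgebra data.
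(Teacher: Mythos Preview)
Your proof is correct and follows essentially the same approach as the paper, which simply cites Gerstenhaber's result on developable filtered rings and invokes ``the results above'' (namely the predevelopability proposition and Proposition~\ref{freestructureprop}); you have just made these steps explicit, including the verification of developability via Proposition~\ref{structureprop}(d) and the standing assumption $\Coop(0)=\varnothing$, which the paper notes more tersely in the paragraph preceding the predevelopability proposition.

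Your final paragraph, where you worry about compatibility of the deformation with the coalgebra structure, goes beyond what the paper actually proves: the paper's argument, resting on the ring-theoretic result of \cite{Gerst}, establishes only that $\B$ is a deformation of $Gr$ as an \emph{algebra}, and leaves the bialgebra-level statement implicit. Your concern is legitimate and your sketch for handling it (via Corollary~\ref{leftcounitcor} and Proposition~\ref{structureprop}(c)) is reasonable, but strictly speaking it is not required to match the theorem as the paper states and proves it.
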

\qed

\subsubsection{Units and co--units for  the free case $\Coop^{nc}$}
\label{gradedfree}
In this section, we let $\Coop$ be a co--operad and consider $\Coop^{nc}(n)=\bigoplus_k\bigoplus_{(n_1,\dots,n_k):\sum_i n_i =n}\Coop(n_1)\odo \Coop(n_k)$ and its  bi--algebra $\B=\bigoplus \Coop^{nc}(n)$.

\begin{prop}
\label{freeunitprop}
The bi--algebra $\B=\bigoplus_n\check\O^{nc}(n)$ has a bi--algebraic co--unit if, and only if, $\Coop$ has a co--operadic co--unit.
\end{prop}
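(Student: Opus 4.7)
The strategy is to pass through the restriction $\eps := \eps_{tot}|_{\Coop(1)}$ and exploit two facts about the free construction: $\Coop^{nc}(1) = \Coop(1)$ since the unique partition of $1$ is $(1)$, and every algebra morphism out of the tensor algebra $\B$ is determined by its values on the length-one generators $\bigoplus_n\Coop(n) \subset \B$.

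For sufficiency, given a cooperadic counit $\eps: \Coop(1) \to \unit$ for $\Coop$, I would define $\eps_{tot}$ on generators by $\eps_{tot}|_{\Coop(1)} = \eps$ and $\eps_{tot}|_{\Coop(n)} = 0$ for $n > 1$, then extend multiplicatively (and unitally on the adjoined unit of $\B'$). Since both $\Delta$ and $\eps_{tot}$ are algebra morphisms, it suffices to check the counit axioms on a single generator $a \in \Coop(n)$. Writing $\Delta(a) = (\id \otimes \mu)\check\gamma(a)$ as a sum over partitions indexed by $(k,(n_1,\ldots,n_k))$ with $\sum n_i = n$, the multiplicativity and support of $\eps_{tot}$ imply that applying $\id \otimes \eps_{tot}$ kills every term with some $n_i > 1$; only the partition $(1,\ldots,1)$ with $k = n$ survives, yielding $(\id \otimes \eps^{\otimes n})\check\gamma_{1,\ldots,1}(a) = a$ by the right cooperadic counit axiom. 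Symmetrically, applying $\eps_{tot} \otimes \id$ forces $k = 1$ with partition $(n)$, giving $(\eps \otimes \id)\check\gamma_{(n)}(a) = a$ by the left cooperadic counit axiom.

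For necessity, given a bialgebra counit $\eps_{tot}$ on $\B$, I would set $\eps := \eps_{tot}|_{\Coop(1)}$ and fix $a \in \Coop(n)$. The identity $a = (\id \otimes \eps_{tot})\Delta(a)$ is an equation in $\B = \bigoplus_m \Coop^{nc}(m)$ which can be decomposed into its operadic-degree-$m$ components. The LHS lives in $\Coop(n) \subset \Coop^{nc}(n)$, while each summand of the RHS indexed by $(k,(n_1,\ldots,n_k))$ lives in $\Coop(k)$; matching the operadic-degree-$n$ component with $\sum n_i = n$ and $k = n$ forces all $n_i = 1$, giving $(\id \otimes \eps^{\otimes n})\check\gamma_{1,\ldots,1}(a) = a$, which is the right cooperadic counit axiom on $\Coop(n)$. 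Analogously, $a = (\eps_{tot} \otimes \id)\Delta(a)$ decomposes by tensor length in $\Coop^{nc}(n)$; matching the length-one tensor forces $k = 1$ and partition $(n)$, yielding $(\eps \otimes \id)\check\gamma_{(n)}(a) = a$, the left cooperadic counit axiom.

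The subtle point in the necessity direction is that $\eps_k := \eps_{tot}|_{\Coop(k)}$ need not vanish for $k > 1$, so both equations carry extra information in components orthogonal to the one containing $a$. These extraneous components only impose relations on the higher $\eps_k$ and never interfere with the isolation of the cooperadic counit identity on $\Coop$: the graded decomposition of $\B$ by both operadic degree and tensor length cleanly separates the counit axiom on $\Coop(n)$ from any compensating terms, so the argument goes through regardless.
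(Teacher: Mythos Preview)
Your proof is correct and follows essentially the same approach as the paper: both exploit the bigrading of the free construction $\B = T(\bigoplus_n \Coop(n))$ by operadic degree and tensor length to isolate the cooperadic counit axioms from the bialgebra counit axioms and vice versa. The paper's version is terser, invoking the earlier Proposition~2.21 and Lemma~2.17 for the right-counit direction and multiplicativity, whereas you argue both directions directly on generators; but the substance is the same. One small point you use implicitly in the necessity direction is that a bialgebra counit is automatically multiplicative (the paper's Lemma~\ref{multlem}), which is what lets you identify $\eps_{tot}$ on $\Coop(1)^{\otimes n}$ with $\eps^{\otimes n}$.
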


\begin{proof}
We already know that  a right co--operadic co--unit for $\check\O^{nc}$ is necessary. This yields a right co--operadic co--unit for $\Coop$ by restriction to $Gr^1=\Coop$. Then for $a\in \Coop=Gr^1$
$a=\eps_1\otimes id\circ \Delta(a)=\sum_k \eps_1\otimes id^{\otimes k}\circ \check \gamma$, since all terms with $k\neq 1 $ vanish and for the term with $k=1$ $\Delta=\check\gamma$. Thus
$\eps_1$ is also a left co--operadic co--unit for $\Coop$. We stress for $\Coop$ not for $\Coop^{nc}$.

Now assume that $\eps_1$ is a  co--operadic co--unit for $\Coop$. It follows that $\eps_1$ is a right co--operadic co--unit for $\Coop^{nc}$ by compatibility. Now since $\mu=\otimes$: the extension $\eps_k=\eps_1^{\otimes k }$ is multiplicative and hence a right bi--algebra co--unit. It remains to check whether it is bi--algebraic, which reduces to checking that it is a left co--algebraic unit. The multiplicativity is clear, so, we only need to check on $Gr^1$, that is for all $a\in \Coop^{nc}(n,1)=\Coop(n)$. On the $\Coop(n)$  the equation says exactly that $\eps_1$ is a left co--operadic unit for $\Coop$.
\end{proof}

\subsubsection{Co--units summary}
\label{unitsummarypar}
If $\B$ comes from $\Coop^{nc}$ then having a bi--algebra unit $\eps_{tot}$ is equivalent
to $\eps_1$ being a co--operad co--unit on $\Coop$.

In general, for $\B$ to have a bi--algebra co--unit, it is necessary, that

\begin{enumerate}
\item
$\eps_1$ is a right co--operadic co--unit.
\item $F^{\geq p}=(F^{\geq 1})^{\geq p}$ .
\item
$P_k=(\eps_{\geq k} \otimes id)\circ \Delta$ are projectors onto $F^{\geq k}$.
\item $\B$ is developable and a deformation of the associated graded $Gr$
\end{enumerate}
On the associated graded $Gr$. If $\eps_{tot}$ is a putative bi--algebra co--unit
\begin{enumerate}
\setcounter{enumi}{4}
\item $\eps_p$ is uniquely determined from $\eps_1$.
\item Lifted to $(Gr^1)^{nc}$, $\eps_1$ is a co--operadic unit, which ensures that the lift of $\eps_{tot}$ is a bi--algebra unit.
\item For $\eps_{tot}$ to descend to $Gr$, it needs to vanish on the
kernel of the, by (2) surjective, map $\mu^{\otimes p-1}:(Gr^1)^{\otimes p}\to Gr^p$.
\end{enumerate}

The first statement holds by Proposition \ref{freestructureprop} and Corollary \ref{structurecor}  which says that $Gr^p=(Gr^1)^p$ and  hence Lemma \ref{multlem} determines $\eps_p$.
Since co--units are multiplicative, they lift  via Proposition \ref{freeunitprop}.

\begin{df}
In general, we say that a co--operadic right co--unit $\eps_1$ is {\em bi--algebraic}, if it extends to  a bi--algebraic co--unit $\eps_{tot}$ for $\B$. If such an $\eps_{tot}$ exists, we will call $\Coop$ bi--algebraic.
 \end{df}

\subsection{The pointed case}\label{2nd-inter}

\begin{df}
A co--operad  $\Coop$ with a right  co--operadic co--unit  $\eps_1$ is called  {\em pointed}
if the co--unit $\eps_1$ is split, i.e.\  there is a section $\eta_1:\unit\to \Coop(1)$ of $\eps_1$.

We call $\Coop$ {\em reduced} if it is pointed and $\eta_1$ is an isomorphism $\unit \simeq \Coop(1)$; it is then automatically pointed.

A bi--algebra unit will be called split and $\B$ pointed if the associated right co--operadic unit $\eps_1$ is split.
\end{df}

We will denote $|:=\eta_1(1)$. For pointed co--operads Lemma \ref{multlem} applies, and we split each $\Coop(n)=\unit\oplus \bar \Coop(n)$
where $\bar\Coop(n)=ker(\eps_n)=ker(\eps_{tot}|_{\Coop(n)})$ and  $\unit$ is the component of $|^n$.
We set $\bar B=\bigoplus \bar\Coop(n)$. Notice that this is smaller than the augmentation ideal $\Bred=ker(\eps_{tot})$.

\begin{ex}
The connection to the free construction of \S\ref{subsect:free} is as follows:  $\Coop^{nc}$ for a unital operad $\O$ is  pointed if  the unit morphism $u:\unit \to \O(1)$ split via a morphism $c$.
The element $|$ is then  the dual element to  the unit $u(1)\in \O(1)$. Here $|=\check c(1)=\eta_1(1)$
and being the dual element means that $\check u(|)=\eps_1\circ\eta_1(1)=(c\circ u)^{\vee}(1)=1$.
All of the examples of \S\ref{examplesec} have this property.
\end{ex}

\begin{lem} If $\B$ has a  split bi--algebraic  co--unit, then have $\Delta(|)=|\otimes |+\bar\Delta(|)$ with $\bar\Delta(|)\in \bar\Coop(1)\otimes \bar\Coop(1)$
and hence $\Delta(|^p)=|^p\otimes |^p +$ terms of lower order in $|$.
Thus the image of $|^p$ is not $0$ in $Gr^p$ and we can split $Gr^p=\unit \oplus \overline{Gr}^p $ where $\unit$ is the component if the image of $|^p$.
\end{lem}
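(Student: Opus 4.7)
The plan is to prove the three assertions in order, leveraging the splitting $\Coop(1)=\unit\cdot|\oplus\bar\Coop(1)$ defined by the section $\eta_1$ of $\eps_1$. Since $|\in\Coop(1)$ and the only partition of $1$ is $(1)$, one has $\Delta(|)=\check\gamma(|)\in\Coop(1)\otimes\Coop(1)$. Decomposing both tensor factors according to the splitting, I would write
$$\Delta(|)=\alpha\,(|\otimes|)+|\otimes x+y\otimes|+z$$
with $\alpha\in\unit$, $x,y\in\bar\Coop(1)$ and $z\in\bar\Coop(1)\otimes\bar\Coop(1)$. Applying the left and right counit axioms $(\eps_{tot}\otimes\id)\Delta=\id=(\id\otimes\eps_{tot})\Delta$, and using $\eps_1(|)=1$ (section property) together with $\eps_1|_{\bar\Coop(1)}=0$, forces $\alpha|+x=|=\alpha|+y$, so $\alpha=1$ and $x=y=0$. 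Hence $\bar\Delta(|):=z$ has the required form.

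For the formula $\Delta(|^p)=|^p\otimes|^p+\text{lower order in }|$, I would use that $\Delta$ is a morphism of algebras (Theorem \ref{bialgthm}), giving $\Delta(|^p)=\Delta(|)^p=(|\otimes|+\bar\Delta(|))^p$. Multilinear expansion in $\B\otimes\B$ yields one ``pure'' summand $(|\otimes|)^p=|^p\otimes|^p$, plus terms each containing at least one tensor factor of $\bar\Delta(|)$. The key bookkeeping point, which I expect to be the main subtlety, is that $\bar\Coop=\ker(\eps_{tot})$ is a two-sided ideal of $(\B,\mu)$: by multiplicativity of $\eps_{tot}$ (Lemma \ref{multlem}) one has $\eps(ab)=\eps(a)\eps(b)$, so any product with one factor in $\bar\Coop$ remains in $\bar\Coop$. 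Componentwise multiplication in $\B\otimes\B$ then routes every mixed term into $\bar\Coop(p)\otimes\Coop(p)+\Coop(p)\otimes\bar\Coop(p)$, which is strictly disjoint from the $\unit\cdot|^p\otimes\unit\cdot|^p$ summand.

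For the final claim about $Gr^p$, multiplicativity together with $\eps_1(|)=1$ gives $\eps_p(|^p)=\eps_1(|)^p=1$, so $|^p\neq0$ in $\Coop(p)$. By Proposition \ref{structureprop}(d) we have $\Coop(p)\cap F^{\geq p+1}=0$, hence $|^p\notin F^{\geq p+1}$ and the class $[|^p]$ is nonzero in $Gr^p$. By Corollary \ref{structurecor} the descended counit $\eps_p\colon Gr^p\to\unit$ is well-defined and satisfies $\eps_p([|^p])=1$, so $[|^p]$ is a section of $\eps_p$ on $Gr^p$. This yields the splitting
$$Gr^p=\unit\cdot[|^p]\oplus\ker(\eps_p|_{Gr^p})=\unit\oplus\overline{Gr}^p$$
as asserted.
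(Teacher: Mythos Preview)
Your proof is correct and follows the same approach as the paper's own proof, which is extremely terse: ``The first statement follows since $\eps_{tot}$ is a bialgebraic [co]unit. The second statement follows from the bialgebra compatibility condition.'' You have correctly unpacked both of these pointers---the counit axioms for part one, multiplicativity of $\Delta$ for part two---and you have also supplied an explicit argument for the third claim (nonvanishing in $Gr^p$ and the splitting), which the paper leaves entirely implicit.

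One minor remark: in your expansion of $(|\otimes|+\bar\Delta(|))^p$, every mixed term in fact lands in $\bar\Coop(p)\otimes\bar\Coop(p)$, not merely in $\bar\Coop(p)\otimes\Coop(p)+\Coop(p)\otimes\bar\Coop(p)$, since each factor of $\bar\Delta(|)$ contributes an element of $\bar\Coop(1)$ to \emph{both} tensor slots simultaneously. This sharper statement is exactly what the paper records in the subsequent Proposition~\ref{pointedprop}. Your weaker containment is of course still sufficient for the Lemma.
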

\begin{proof}
The first statement follows since $\eps_{tot}$ is a bi--algebraic unit. The second statement follows, from the bi--algebra compatibility  condition.
\end{proof}

\begin{prop}
\label{pointedprop}
Let $\Coop$ be a co--operad with multiplication and  a pointed  bi--algebraic co--unit on $\B$, then

\begin{eqnarray}
\Delta(|)&=&|\otimes |+ \bar\Delta(|)\text{ with }\nn\\
\bar\Delta(|)&\in &\bar\Coop(1)\otimes \bar\Coop(1)\\
\Delta(|^p)&=&|^p\otimes |^p+\bar\Delta(|^p) \text{ with }\nn\\
\bar\Delta(|^p)&\in &\bar\Coop(p)\otimes \bar\Coop(p)
\end{eqnarray}

And for $a\in \bar \Coop(n)\cap F^{\geq p}$
\begin{eqnarray}
\label{reducedeq}
\Delta(a)&=&\sum_{k\geq p}^{n} |^k \otimes a_k+ a\otimes |^n + \bar\Delta(a) \text{ with }\nn\\
&&a_k\in \bar\Coop(n), \bar\Delta(a)\in \bar\B\otimes \bar\Coop(n)
\end{eqnarray}
with $a=\sum_{k\geq p}^n a_k$ and the $a_k$ are as in Corollary \ref{leftco--unitcor}.

Likewise, in the associated graded case, for $a\in \bar \Coop(n,p)$
\begin{eqnarray}
\label{gradedreducedeq}
\Delta(a)&=&|^p \otimes a + a\otimes |^n + \bar\Delta(a) \text{ with }\nn\\
&&\bar\Delta(a)\in \bar\Gr\otimes \bar\Gr
\end{eqnarray}

Again, if these equations hold having a bi--algebraic co--unit $\eps_{tot}$ is equivalent to $\eps_1$ being a right co--operadic co--unit.

\end{prop}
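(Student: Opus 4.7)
My plan is to establish the four decomposition formulas by systematic use of the direct sum splittings $\Coop(m)=\unit\cdot|^m\oplus\bar\Coop(m)$ together with the left and right counit identities, and then finish (5) by a short ``if'' direction supplementing the already--established ``only if''.

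For (1), since $|\in\Coop(1)$ has depth one, the cooperadic part of $\Delta(|)$ lies entirely in $\Coop(1)\otimes\Coop(1)$; writing $\Delta(|)=\alpha(|\otimes|)+|\otimes b+c\otimes|+\bar\Delta(|)$ with $b,c\in\bar\Coop(1)$ and $\bar\Delta(|)\in\bar\Coop(1)\otimes\bar\Coop(1)$, the counit identities $(\eps_1\otimes\id)\Delta(|)=|=(\id\otimes\eps_1)\Delta(|)$ force $\alpha=1$ and $b=c=0$. For (2), multiplicativity of $\Delta$ gives $\Delta(|^p)=\Delta(|)^p$ in $\B\otimes\B$; invoking $\eps_p\circ\mu^{p-1}=\eps_1^{\otimes p}$ from Lemma~\ref{multlem}, any product on a given side of the tensor containing at least one factor from $\bar\Coop(1)$ lies in $\bar\Coop(p)$ on that side, so the only term escaping $\bar\Coop(p)\otimes\bar\Coop(p)$ is $|^p\otimes|^p$.

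For (3), $\Delta(a)\in\bigoplus_k\Coop(k)\otimes\Coop(n)$, and I first split the right factor as $\Delta(a)=\sum_k\beta_k\otimes|^n+\Delta^{\natural}(a)$ with $\Delta^{\natural}(a)\in\B\otimes\bar\Coop(n)$. Applying $\id\otimes\eps_{tot}=\id\otimes\eps_n$ and using the right counit forces $\sum_k\beta_k=a$, hence $\beta_n=a$ and $\beta_k=0$ for $k\neq n$. Splitting the left factor via $\Coop(k)=\unit|^k\oplus\bar\Coop(k)$ writes $\Delta^{\natural}(a)=\sum_k|^k\otimes x_k+\bar\Delta(a)$ with $x_k\in\bar\Coop(n)$ and $\bar\Delta(a)\in\bar\B\otimes\bar\Coop(n)$. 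Then $(\eps_k\otimes\id)\Delta_k(a)=x_k$, since $\eps_k$ annihilates the $\bar\Coop(k)$-component and picks out $|^k$, matching termwise the component $a_k$ from Corollary~\ref{leftcounitcor}; the bounds $p\le k\le n$ follow from $a\in F^{\ge p}$ and Proposition~\ref{structureprop}(d). Part (4) is then immediate from (3) on the associated graded: for $a\in\bar\Coop(n,p)$, each $a_k$ with $k>p$ lies in $F^{\ge p+1}$ and vanishes in $Gr^p$, so only $|^p\otimes a$ remains in place of $\sum_k|^k\otimes a_k$.

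The main bookkeeping obstacle is in (3), where one must check that stripping off the $|^k$-components genuinely leaves a remainder in $\bar\B\otimes\bar\Coop(n)$ and not merely $\B\otimes\bar\Coop(n)$; this rests on the pointed splitting diagonalising $\eps_k$ componentwise on each $\Coop(k)$. For (5), the ``only if'' direction reiterates earlier results in the section; for the converse, assume the decompositions (\ref{reducedeq})--(\ref{gradedreducedeq}) and given $\eps_1$ a right cooperadic counit, define $\eps_n$ by $\eps_n(|^n)=1$ and $\eps_n|_{\bar\Coop(n)}=0$, and set $\eps_{tot}=\sum_n\eps_n$. Substituting directly into (\ref{reducedeq}) verifies both counit identities, and multiplicativity of $\eps_{tot}$ follows from the pointed splitting being compatible with $\mu$ (namely $|^i\cdot|^j=|^{i+j}$ and $|^i\cdot\bar\Coop(j)\subset\bar\Coop(i+j)$), which is forced once $\eps_1$ is extended in this manner.
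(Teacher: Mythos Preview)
Your proof is correct and follows essentially the same approach as the paper: split via the pointed decomposition $\Coop(m)=\unit\,|^m\oplus\bar\Coop(m)$ and apply $\eps_{tot}$ on the left (invoking Corollary~\ref{leftcounitcor}) and on the right to isolate the unreduced terms, with the graded case obtained by direct substitution. The paper's proof is much terser (three sentences) but the underlying argument is identical; your only addition is treating claim~(5) explicitly, which the paper actually defers to the subsequent Corollary---note there that multiplicativity of $\eps_{tot}$ also requires $\bar\Coop(i)\cdot\bar\Coop(j)\subset\bar\Coop(i+j)$, which you do not list but which follows by the same mechanism.
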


\begin{proof}
Using Corollary \ref{leftco--unitcor}  and applying $\eps_{tot}$ on the left, we obtain the first term. Applying $\eps_{tot}$ on the right yields the second term. These are different if $a\neq |^k$ for some $k$. In the case $a=|^k$, the equation follows from the lemma above. In general,
the remaining terms lie in the reduced space.
Replacing $\B$ with $Gr$ proves the rest.
\end{proof}

We also get a practical criterion for a bi--algebra co--unit.
\begin{cor}
Conversely,
assume the equations in Propositions \ref{pointedprop} hold, then
having a  bi--algebraic co--unit $\eps_{tot}$ is equivalent to $\eps_1$ being a right co--operadic co--unit.
\end{cor}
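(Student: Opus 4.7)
The necessity direction (a bialgebraic counit yields a right cooperadic counit $\eps_1$) was already established in the discussion preceding Proposition \ref{pointedprop}, so the content lies entirely in showing sufficiency. My plan is to build $\eps_{tot}$ by hand from the decomposition provided by the hypothesized equations, and then verify the coalgebra and algebra axioms separately by inspection of those same equations.

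Concretely, the equations of Proposition \ref{pointedprop} equip each $\Coop(n)$ with the splitting $\Coop(n) = \unit\cdot|^n\oplus\bar\Coop(n)$. I would define $\eps_n\colon\Coop(n)\to\unit$ by $\eps_n(|^n)=1$ and $\eps_n|_{\bar\Coop(n)}=0$, and set $\eps_{tot}=\sum_n\eps_n$. By construction $\eps_{tot}$ restricted to $\Coop(1)$ agrees with the given right cooperadic counit. The counit axioms then follow by plugging into (\ref{reducedeq}): applying $\eps_{tot}\otimes\id$ to
$$\Delta(a)=\sum_{k\geq p}^{n}|^k\otimes a_k + a\otimes|^n + \bar\Delta(a)$$
with $a\in\bar\Coop(n)\cap F^{\geq p}$ kills $a\otimes|^n$ (since $\eps_{tot}(a)=0$) and $\bar\Delta(a)\in\bar\B\otimes\bar\Coop(n)$ (whose left factor is also killed), leaving $\sum_k a_k=a$ by Corollary \ref{leftcounitcor}; applying $\id\otimes\eps_{tot}$ kills the sum and $\bar\Delta(a)$ in the second tensor slot, leaving $a\otimes 1=a$. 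The case $a=|^n$ is handled identically with the formula $\Delta(|^n)=|^n\otimes|^n+\bar\Delta(|^n)$, using that $\bar\Delta(|^n)\in\bar\Coop(n)\otimes\bar\Coop(n)$.

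The main obstacle is verifying that $\eps_{tot}$ is an \emph{algebra} morphism, i.e.\ $\eps_{tot}\mu=\eps_{tot}\otimes\eps_{tot}$. Breaking this into cases using the splittings, the identity $|^m\cdot|^n=|^{m+n}$ is tautological, so what must be checked is that products with at least one factor in $\bar\Coop$ land in $\bar\Coop$. I would deduce this from the compatibility \eqref{oureqn} between $\mu$ and $\check\gamma$: applying $\check\gamma$ to such a product and isolating the putative $|^{m+n}$-component, the principal part of $\pi\,(\check\gamma\otimes\check\gamma)$ forces both inputs to contribute their $|^{*}$-parts, contradicting the assumption that one lies in $\bar\Coop$. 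This shows that $\eps_{tot}$ is multiplicative and completes the proof. Together with Proposition \ref{pointedprop} this gives the claimed equivalence.
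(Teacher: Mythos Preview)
Your approach is essentially the paper's: both read off the left and right coalgebra counit identities directly from the form of \eqref{reducedeq}. Two remarks on the differences.

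First, your citation of Corollary~\ref{leftcounitcor} for $\sum_k a_k=a$ is circular as written, since that corollary assumes $\eps_{tot}$ is already a left counit. The point is rather that the identity $a=\sum_k a_k$ is \emph{part of the hypothesis}: it is stated immediately after \eqref{reducedeq} in Proposition~\ref{pointedprop}, and you are assuming those equations hold. So replace ``by Corollary~\ref{leftcounitcor}'' with ``by hypothesis''.

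Second, the paper handles multiplicativity differently. Rather than arguing via the compatibility \eqref{oureqn} as you do, it invokes Lemma~\ref{multlem}, which asserts that any (left or right) coalgebra counit on $\B$ is automatically a multiplicative family. Thus once the coalgebra counit property is verified from \eqref{reducedeq}, multiplicativity follows and no separate argument is needed. Your compatibility argument is a reasonable alternative route, though the sketch you give (``isolating the putative $|^{m+n}$-component'' via $\check\gamma$) would need to be made precise about how $\check\gamma$ detects the splitting---this is not entirely obvious since the splitting is defined via $\eps_k$, not directly via $\check\gamma$.
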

\begin{proof}
By Lemma \ref{multlem}, we see that $\eps_k$ is the projection to the factor $|^k$ of $\Coop(k)=\unit\oplus \bar\Coop(k)$ and on that factor it is $\eps_1^{k}\circ \mu^{k-1}$ and hence determined by $\eps_1$. Now the second term of \eqref{reducedeq} is equivalent to $\eps_{tot}$ being
a right bi--algebra co--unit. Furthermore, since this is the term relevant for the right co--operad co--unit,
we obtain the equivalence for the right bi--algebra co--unit.
Similarly, applying the given $\eps_{tot}$ as a potential left bi--algebra co--unit, we see that having a left bi--algebra co--unit is equivalent to $a=\sum_k a_k$, i.e.\ the first term in \eqref{reducedeq}.
\end{proof}

\subsection{Hopf Structure}
In this section, unless otherwise stated, we will assume that $\Coop$ is a
co--operad with multiplication and a split bi--algebraic co--unit and assume Assumption \ref{Hopfassump}.

\begin{df}
\label{almostconnectedcoopdef}
We call a pointed
co--operad $\Coop$ with multiplication and bi--algebraic co--unit $\eps_{tot}$
{\em  connected} if
 \begin{enumerate}
  \item The element $|$ is group--like: $\Delta(|)=|\otimes |$.
\item  $(\Coop(1),\eta_1,\eps_1)$ is connected as a co--algebra.
\end{enumerate}
\end{df}
Notice that a reduced $\Coop$ is automatically  connected, but this is not a necessary condition.
In the free case, the co--operad $\Coop^{nc}$ is connected if $\Coop$ is.
If we start with an operad $\O$ as in \S\ref{operadpar},  the co--operad $\Coop^{nc}$ is  connected if $\O$ is co-connected.

\begin{rmk}
Notice that for a connected  co--operad $\{\Coop(n)\}$ the bi--algebras $\B=\bigoplus \Coop(n)$ and $\B'=\unit \oplus \B$ are usually not connected,
since  all powers $|^k$ are group--like:
 $\Delta(|^k)=|^k\otimes |^k$, $\eps_{tot}(|^k)=1$.
\end{rmk}

For a pointed co--operad  with multiplication,
let $\I$ be the two-sided ideal spanned by $1-|$.
Set
\begin{equation}
\H:=\B'/\I
\end{equation}
Notice that in $\H$ we have that $|^{k}\equiv 1 \mod \I$ for all $k$.

\begin{prop}
If  $\Coop$ is connected, then $\I$ is a co--ideal and hence $\H$ is a co--algebra. The unit $\eta$ descends to a unit $\bar \eta:\unit\to \H$
and the co--unit $\eps_{tot}$ factors as $\bar\eps$  to make $\H$ into a bi--algebra.
\end{prop}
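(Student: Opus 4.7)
The plan is to reduce everything to a short calculation showing that the generator $1-|$ of $\I$ satisfies the defining relations of a coideal. Since $\I$ is already a two-sided ideal of the algebra $\B'$ by construction, once I verify the coideal conditions, the quotient $\H=\B'/\I$ will inherit both the algebra and coalgebra structures from $\B'$, and the bialgebra compatibility will be inherited automatically from the bialgebra compatibility in $\B'$. The only input I actually need from the almost connectedness hypothesis is condition (1), that $|$ is group-like; the coalgebra-connectedness of $\Coop(1)$ will only enter in the subsequent construction of the antipode (not claimed in this statement).

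First I would check the crucial computation: since $\Delta$ is an algebra homomorphism on $\B'$ with $\Delta(1)=1\otimes 1$, and $|$ is group-like so that $\Delta(|)=|\otimes|$, one has
\begin{equation*}
\Delta(1-|)=1\otimes 1-|\otimes|=1\otimes(1-|)+(1-|)\otimes|\;\in\;\B'\otimes\I+\I\otimes\B'.
\end{equation*}
For a general element $x=\sum_i a_i(1-|)b_i\in\I$, the multiplicativity of $\Delta$ gives $\Delta(x)=\sum_i\Delta(a_i)\,\Delta(1-|)\,\Delta(b_i)$, which again lies in $\B'\otimes\I+\I\otimes\B'$ because the latter is a sub-$\B'\otimes\B'$-bimodule of $\B'\otimes\B'$ (as $\I$ is a two-sided ideal). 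Hence $\Delta(\I)\subseteq \I\otimes\B'+\B'\otimes\I$.

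Next I would verify that $\eps_{tot}(\I)=0$. By construction of $\B'$ we have $\eps_0(1)=1$; since $\eta_1$ is a section of $\eps_1$, $\eps_{tot}(|)=\eps_1(\eta_1(1))=1$, so $\eps_{tot}(1-|)=0$. Multiplicativity of $\eps_{tot}$ (it is an algebra morphism, being a bialgebra counit) then gives $\eps_{tot}(a_i(1-|)b_i)=\eps_{tot}(a_i)\cdot 0\cdot\eps_{tot}(b_i)=0$. Thus $\I$ is a coideal, so $\H$ carries a well-defined coproduct $\bar\Delta$ and counit $\bar\eps$ induced by $\Delta$ and $\eps_{tot}$, while the unit $\eta\colon\unit\to\B'$ descends to $\bar\eta\colon\unit\to\H$ since $1\notin\I$ is not needed—only that $\bar\eta$ is a morphism, which is automatic. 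The bialgebra axioms on $\H$ follow from those on $\B'$ by applying the projection $\B'\to\H$. There is no real obstacle here; the only delicate point is the bookkeeping to show that $\B'\otimes\I+\I\otimes\B'$ is preserved under the outer multiplications, which is immediate from the two-sided ideal property.
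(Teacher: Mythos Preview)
Your proof is correct and follows essentially the same approach as the paper: compute $\Delta(1-|)$ and $\eps_{tot}(1-|)$ directly, then use multiplicativity to extend to all of $\I$. Your decomposition $1\otimes 1-|\otimes|=1\otimes(1-|)+(1-|)\otimes|$ is in fact cleaner than the paper's, which writes $(1-|)\otimes(1+|)$ (a slip, since that expands to $1\otimes1+1\otimes|-|\otimes1-|\otimes|$); your version makes the membership in $\B'\otimes\I+\I\otimes\B'$ transparent. Your observation that only the group-likeness of $|$ is used here, with the connectedness of $\Coop(1)$ reserved for the antipode, is also correct.
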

\begin{proof}
Analogous to Proposition \ref{Iprop}.
\end{proof}

\begin{thm}
If  $\Coop$ is  connected as a co--operad then $\H$ is co--nilpotent and hence admits a unique structure of  Hopf algebra.
\end{thm}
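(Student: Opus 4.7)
The strategy is to show that $\H$ is conilpotent; the standard convolution-inversion argument recalled in Appendix~B then provides the unique antipode making $\H$ into a Hopf algebra.  The key first observation is that in $\H=\B'/\I$ the relations $|^k\equiv 1\pmod{\I}$ reduce equation~\eqref{reducedeq} to $\Delta(a)\equiv 1\otimes a+a\otimes 1+\bar\Delta(a)$ for $a\in\bar\Coop(n)$, so that the reduced coproduct is simply $\tilde\Delta(a)=\bar\Delta(a)$ and the group-like obstructions coming from the powers of $|$ have been absorbed into the identity.

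Next, I would filter $\H$ by sub-bialgebras $\H_{(n)}\subset\H$, where $\H_{(n)}$ is the subalgebra generated by $\{1\}\cup\bigoplus_{m\le n}\Coop(m)$.  Closure under the coproduct follows from the cooperad decomposition $\Delta_k(a)=(\id\otimes\mu^{k-1})\check\gamma_k(a)\in\Coop(k)\otimes\bigl(\text{product in }\bigoplus_{m'\le m}\Coop(m')\bigr)$ for $a\in\Coop(m)$, $m\le n$; and clearly $\H=\bigcup_n\H_{(n)}$.  I would induct on $n$.  For the base case $n=1$, hypothesis~(2) ensures that $\bar\Coop(1)$ is conilpotent (it is the augmentation ideal of the Quillen-connected coalgebra $\Coop(1)$, with $|$ identified to $1$ in $\H$); since in a bialgebra the product of conilpotent elements is again conilpotent (if $\tilde\Delta^p(x)=\tilde\Delta^q(y)=0$ then $\tilde\Delta^{p+q}(xy)=0$), the generated subalgebra $\H_{(1)}$ is conilpotent.

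For the inductive step, assuming $\H_{(n-1)}$ conilpotent, I must show that each $a\in\bar\Coop(n)$ is conilpotent.  Decomposing $\bar\Delta(a)=\sum_{k=1}^n\Delta_k^\flat(a)$: for intermediate $2\le k\le n-1$ the right factor is a product of at least two elements of operad degrees strictly less than $n$, hence lies in $\H_{(n-1)}$ and is conilpotent by induction.  The boundary cases $k=1$ and $k=n$ are the delicate ones, since they can produce factors of the same operad degree $n$ as $a$.  They are controlled by the left and right cooperadic counit properties of $\eps_1$ (both valid since $\eps_{tot}$ is bialgebraic): one writes $\check\gamma_1(a)=|\otimes a+\bigl(\bar\Coop(1)\otimes\Coop(n)\bigr)$-part and symmetrically $\check\gamma_n(a)=a\otimes|^{\otimes n}+\bigl(\Coop(n)\otimes\bar\Coop(1)^{\otimes n}\bigr)$-part; in $\H$ the $|\otimes a$ and $a\otimes|^n$ pieces become the primitive contributions $1\otimes a$ and $a\otimes 1$ and are stripped out in passing to $\tilde\Delta$, leaving tensors with at least one factor in $\bar\Coop(1)\subset\H_{(1)}$ (conilpotent).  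The hardest step will be to show that iterating $\tilde\Delta$ against these residual boundary contributions terminates: by coassociativity of $\check\gamma_1$ the iterated coaction $\check\gamma_1^{(N)}\colon\Coop(n)\to\Coop(1)^{\otimes N}\otimes\Coop(n)$ accumulates $N$ factors in $\bar\Coop(1)$ whose Quillen-conilpotence, together with a standard filtered-coalgebra argument, forces vanishing for $N$ sufficiently large.  Combined with the intermediate-$k$ contributions, this yields $\tilde\Delta^N(a)=0$ for some $N$, so $a$ is conilpotent and the induction closes.  Hence $\H=\bigcup_n\H_{(n)}$ is conilpotent and Appendix~B supplies the unique Hopf algebra structure.
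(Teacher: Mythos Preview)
Your approach is correct and follows essentially the same strategy as the paper's: induction on the operad degree $n$, with the base case resting on the Quillen-connectedness of $\Coop(1)$, and the inductive step driven by the observation that in each term of $\tilde\Delta(a)$ at least one tensor factor is already known to be conilpotent (so that, by coassociativity and the identity $\tilde\Delta^{M+N+1}(a)=\sum\tilde\Delta^M(a')\otimes\tilde\Delta^N(a'')$, the element $a$ itself is conilpotent).

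The paper's case split is more economical than yours. It handles all $k<n$ uniformly by observing that the \emph{left} factor lies in $\Coop(k)$ and is conilpotent by the induction hypothesis (note that $\bigoplus_{j\le k}\Coop(j)$ is a subcoalgebra), leaving only $k=n$, where the right factor is a product of elements of $\Coop(1)$ and hence conilpotent by the bialgebra compatibility and hypothesis~(2). Your three-way split ($k=1$, $2\le k\le n-1$, $k=n$) also works, but your ``hardest step'' detour through the iterated coaction $\check\gamma_1^{(N)}$ for the $k=1$ case is unnecessary: once you observe that the residual left factor lies in $\bar\Coop(1)$, it is already conilpotent by hypothesis~(2), and the same one-conilpotent-factor-suffices argument you invoke elsewhere applies immediately. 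No separate filtered-coalgebra argument is required.
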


\begin{proof} Analogous to Theorem \ref{hopfthm}
\end{proof}

\subsection{Symmetric  and infinitesimal versions, deformations, amputation and grading}
Assuming that we have a co--operad with multiplication and a split bi--algebraic co--unit, we get the analogous results to \S\S\ref{defsec}, \ref{freeinfsec}, \ref{freeampsec} and \ref{gradingpar}, see below.

\subsubsection{Symmetric version}
We now assume that the co--operad $\Coop$ is symmetric and in finite sets. We then have the same diagram as \eqref{coprodsqeq}.
\begin{df}
A  co-operad with multiplication in finite sets, is a co--operads in finite sets with  multiplications
$\mu_{S,T}:\Coop(S)\otimes \Coop(T)\to\Coop(S\sqcup T)$, such that the following diagram commutes.
\begin{equation}
\label{multequieq}
\xymatrix{
\Coop(S)\otimes \Coop(T)
\ar[d]_{\sigma\otimes \sigma'}\ar[r]^{\mu_{S,T}}& \Coop(S\sqcup T)\ar[d]^{\sigma\sqcup\sigma'}\\
\Coop(S')\otimes \Coop(T')\ar[r]^{\mu_{S',T'}}&\Coop(S'\sqcup T')
}
\end{equation}
and the analogue of \eqref{oureqn} holds equivariantly.
\end{df}

\begin{lem}
For a co--operad with multiplication in finite sets the co--operad structure and the multiplication descend to the coinvariants.
\end{lem}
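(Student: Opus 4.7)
The plan is to read off both descents from the equivariance diagrams already displayed just before the lemma. First, for the cooperad structure, the outer square of the last large diagram commutes by the definition of a cooperad in finite sets: the maps $\check\gamma_f$ and $\check\gamma_{f'}$ are intertwined by the action of the bijections $\sigma, \sigma'$ and their restrictions $\sigma'_t$. Projecting along the vertical maps $p$ to the coinvariants $\Coop(n)_{\SS_n}$ on the source and to $\Coop(k)_{\SS_k}\otimes\bigodot_{i=1}^k\Coop(n_i)_{\SS_{n_i}}$ on the target, the universal property of colimits yields the unique middle map $\bar\gamma$. The point is that varying $f$ within the category $Iso(n,k)$ produces exactly the relations one has to quotient by, so well-definedness is free.

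Second, for the multiplication, I would use that in our symmetric monoidal categories (where biproducts and tensors distribute, by Assumption), taking coinvariants commutes with tensor products of independent group actions, so that
\begin{equation*}
\Coop(S)_{\SS_S}\otimes \Coop(T)_{\SS_T}\;\cong\;\bigl(\Coop(S)\otimes \Coop(T)\bigr)_{\SS_S\times\SS_T}.
\end{equation*}
Diagram \eqref{multequieq}, specialized to $\sigma\in\SS_S$ and $\sigma'\in\SS_T$ acting on the right through the inclusion $\SS_S\times\SS_T\hookrightarrow \SS_{S\sqcup T}$, states precisely that $\mu_{S,T}$ is $\SS_S\times\SS_T$-equivariant. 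Hence $\mu_{S,T}$ factors through a map $(\Coop(S)\otimes\Coop(T))_{\SS_S\times\SS_T}\to\Coop(S\sqcup T)_{\SS_S\times\SS_T}$, and postcomposing with the natural projection onto the fuller coinvariants $\Coop(S\sqcup T)_{\SS_{S\sqcup T}}$ produces the sought multiplication $\bar\mu_{n,m}:\Coop(n)_{\SS_n}\otimes\Coop(m)_{\SS_m}\to\Coop(n+m)_{\SS_{n+m}}$. Well-definedness of the extra quotient is automatic because we are taking a further coinvariants.

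Third, I would verify that the descended $\bar\gamma$ and $\bar\mu$ still satisfy the cooperad coassociativity \eqref{coassopeq} and the compatibility \eqref{oureqn}. Since both identities already hold at the pre-coinvariant level in an $\SS_\bullet$-equivariant way, and the projections to coinvariants are surjective (in fact epi), the identities descend term by term; formally, one applies the universal property of the colimits to both sides of each defining equation.

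The only subtle point, rather than an obstacle, is keeping track of which symmetric group one is quotienting by at each stage; in particular, one must use that $\SS_S\times\SS_T\subseteq\SS_{S\sqcup T}$ and that shuffles act trivially on coinvariants of the larger group, which is what ultimately gives graded commutativity of $\bar\mu$ when the monoidal structure is symmetric. This is why one first descends to the product-group coinvariants and only then to the full coinvariants, rather than attempting to do it in a single step.
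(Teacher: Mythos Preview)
Your proof is correct and follows exactly the approach the paper intends: the lemma is marked \qed\ in the paper because the diagrams displayed immediately before it already contain the argument, and you have simply unpacked that content --- the middle map $\bar\gamma$ exists by the universal property of the colimits defining coinvariants, and the equivariance square \eqref{multequieq} gives the descent of $\mu$. Your additional remarks on checking the axioms via epimorphy of the projections, and on the two-step descent through $\SS_S\times\SS_T$ then $\SS_{S\sqcup T}$, are correct elaborations that the paper leaves implicit.
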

\begin{proof}
The well--definedness of $\check\gamma$ on the coinvariants is guaranteed by \eqref{coprodsqeq}.
Due to the diagram \eqref{multequieq} the multiplication descends to the coinvariants as well. This means that $\Delta$ is well defined and it follows that it is co--associative.
Finally, since \eqref{oureqn} holds equivariantly, $\Delta$ is compatible
\end{proof}

Set $\B=\bigoplus_n \Coop(n)_{\SS_n}$. A bi--algebraic co--unit $\eps$ is called invariant
if for all $a_S\in \Coop(S)$ and any isomorphism $\sigma:S\to S'$, $\eps\circ \sigma=\eps$.

Retracing the steps of the non-$\Sigma$ version and \S\ref{operadpar}, the following is straight--forward:

\begin{prop}
With the assumption above, $\B$ is a non-unital, non-co--unital,  bi--algebra.
If we furthermore assume that an invariant bi--algebraic co--unit for $\B$ exists then $\B'=k\oplus\B$ is a unital and co--unital
bi--algebra and $\bar\H:=\B/ \bar{\mathcal {I}}$, where $\bar{ \mathcal{I}}$ is the image of $\mathcal{I}$ in $\B'$, is a bi--algebra, that is if it is connected a commutative Hopf algebra.
\qed
\end{prop}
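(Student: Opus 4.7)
First, I would establish that $\B_{\SS}$ is a non-unital, non-counital bialgebra. By the Lemma immediately preceding the proposition, both the cooperad structure $\check\gamma$ and the multiplication $\mu$ descend to the coinvariants, the latter by \eqref{multequieq}. The compatibility axiom \eqref{oureqn} is built from equivariant morphisms and therefore descends as well. One then applies Theorem~\ref{bialgthm} to the induced cooperad-with-multiplication structure on the collection $\{\Coop(n)_{\SS_n}\}$ to obtain the bialgebra structure on $\B_{\SS}$, with comultiplication $(\id\otimes\mu)\check\gamma$ and product $\mu$.

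Next, I would verify commutativity directly. For $a\in\Coop(S)$ and $b\in\Coop(T)$, the flip bijection $\tau\colon S\sqcup T\xrightarrow{\sim}T\sqcup S$, together with the equivariance square \eqref{multequieq}, gives $\mu_{T,S}(b\otimes a)=\tau\cdot\mu_{S,T}(a\otimes b)$. Hence the two products coincide in $\Coop(S\sqcup T)_{\SS_{|S|+|T|}}$, so $\B_{\SS}$ is commutative.

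Under the invariance hypothesis, $\eps_{tot}$ factors through $\B_{\SS}$, and adjoining a unit as before yields the unital and counital bialgebra $\B'_{\SS}=\unit\oplus\B_{\SS}$. The element $|$ is $\SS_1$-invariant (trivially), so the ideal $\I=(1-|)$ has a well-defined image $\bar\I$ in $\B'_{\SS}$. It is a two-sided ideal since $\B_{\SS}$ is commutative and $\I$ was two-sided, and a coideal since the computation $\Delta(1-|)=(1-|)\otimes(1+|)$ and $\eps_{tot}(1-|)=0$ from the earlier proposition uses only $\mu$, $\check\gamma$ and $\Delta(|)=|\otimes|$, all of which are equivariant. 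Thus $\bar\H=\B'_{\SS}/\bar\I$ is a commutative bialgebra with induced unit $\bar\eta$ and counit $\bar\eps$.

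The main obstacle is connectedness. Assuming $\Coop$ is almost-connected (as in the parallel theorem for $\H$, an assumption I read as implicit here), I would adapt that proof: induction on operadic degree shows any element lies in the kernel of $\pi^{\otimes m}\Delta^m$ for some $m$, where $\pi=\id-\bar\eta\bar\eps$. Since $|^k\equiv 1\bmod\bar\I$, the group-like tower collapses; conilpotency on $\Coop(1)_{\SS_1}=\Coop(1)$ is inherited from the almost-connectedness of $\Coop(1)$; and coassociativity together with the compatibility of $\Delta$ with $\mu$ reduces the higher-degree cases to the $\Coop(1)$ case, commutativity causing no obstruction. A connected commutative bialgebra admits a unique antipode (Appendix B), giving the required Hopf structure on $\bar\H$.
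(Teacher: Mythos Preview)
Your proposal is correct and follows exactly the approach the paper intends: the proposition is stated without proof in the paper, as it is meant to follow by transporting the non-symmetric arguments (Theorem~\ref{bialgthm}, the unit/counit discussion, and the quotient-by-$\I$ theorem) to coinvariants via the preceding Lemma, together with the observation that the flip $\tau:S\sqcup T\to T\sqcup S$ forces commutativity after passing to $\SS$-coinvariants. You also correctly flag that the connectedness conclusion tacitly requires the almost-connectedness hypothesis on $\Coop$ from Definition~\ref{almostconnecteddef}, which is indeed implicit in the paper's statement.
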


\subsubsection{The free example}
 In the free example, starting with a symmetric operad, we do not only have to take the sum, but also induce the representation to $\SS_n$ in
order to obtain a symmetric co--operad with multiplication. Let
\begin{equation}
\label{ncsymeq}
\Coop^{symnc}(n)=\bigoplus_{k} \bigoplus_{(n_1,\dots,n_k):\sum_i n_i=n}Ind_{(\SS(n_1)\times \dots \times \SS(n_k))\wr \SS(k)}^{\SS_n}\Coop(n_1)\odo \Coop(n_k)
\end{equation}

\begin{rmk}
When taking coinvariants, this induction step is canceled and we only have to take coinvariants with respect to
$(\SS(n_1)\times \dots \times \SS(n_k))\wr\SS(k)$:

\begin{multline}
\B=\bigoplus  \Coop^{symnc}(n)_{\SS_n}= \\
\bigoplus_{k} \bigoplus_{(n_1,\dots,n_k):\sum_i n_i=n}\left(\Coop(n_1)_{\SS_{n_1}}\odo \Coop(n_k)_{\SS_{n_k}}\right)_{\SS_k}=\\
\bigoplus_{(n_1,\dots,n_k):\sum_i n_i=n}\Coop(n_1)_{\SS_{n_1}}\odot \cdots \odot \Coop(n_k)_{\SS_{n_k}}
\end{multline}
where $\odot$ is the symmetric product.
\end{rmk}

\begin{prop} The
$\Coop^{symnc}(n)$ form a symmetric co--operad with multiplication and
$\B=\bigoplus  \Coop^{symnc}(n)_{\SS_n}$ forms a bi--algebra, and if $\Coop$ has an operadic co--unit, then $\B'$ is a unital an non--unital bi--algebra. Furthermore if $\Coop(1)$ is  almost connected, then the quotient $\B'/\I$ is a Hopf algebra.
\end{prop}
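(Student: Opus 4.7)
The plan is to proceed in three stages corresponding to the three assertions, reducing everything to the non-symmetric results already established and then handling the equivariance.

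For the first assertion, I would first verify that $\Coop^{symnc}$ is a symmetric cooperad with multiplication in the sense encoded by Definition \ref{ourdef} and the equivariance square (\ref{multequieq}). On each summand $\Coop(n_1)\odo\Coop(n_k)$ the cooperadic structure maps $\check\gamma$ are those inherited from $\Coop^{nc}$, and the only new input is equivariance under the wreath product $(\SS(n_1)\times\cdots\times\SS(n_k))\wr\SS(k)$. This equivariance is built into the induction: a representative of $\Coop^{symnc}(n)$ is specified only up to permutations of the factors and simultaneous relabelings inside them, and $\check\gamma$ was constructed on $\Coop^{nc}$ in a way that transforms tensor-word stabilizers into tensor-word stabilizers. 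Inducing up to $\SS_n$ then produces a well-defined $\SS_n$-equivariant $\check\gamma$; the coassociativity (\ref{coassopeq}) is inherited summand by summand from $\Coop^{nc}$.

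For the multiplication $\mu_{n,m}=\otimes$, equivariance of (\ref{multequieq}) follows from the standard fact that the stabilizer of a concatenation $w\otimes w'$ contains the product of the stabilizers of $w$ and $w'$, and the induced representation on $\Coop^{symnc}(n+m)$ is exactly built from such concatenations of orbits. The compatibility equation (\ref{compateq}) is inherited from $\Coop^{nc}$, where $\mu$ is tensor concatenation and $\check\gamma$ is defined precisely to make (\ref{compateq}) tautological. Theorem \ref{bialgthm} then yields a non-unital non-counital bialgebra structure on $\bigoplus_n\Coop^{symnc}(n)$. By the Lemma on coinvariants immediately preceding the statement, $\check\gamma$ and $\mu$ descend to $\B=\bigoplus_n\Coop^{symnc}(n)_{\SS_n}$, so this quotient is again a bialgebra.

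For the second assertion, suppose $\Coop$ has an operadic counit $\eps_1\colon\Coop(1)\to\unit$. By Proposition \ref{freeunitprop} applied to $\Coop^{nc}$ the multiplicative family $\eps_k=\eps_1^{\otimes k}\circ\mu^{k-1}$ yields a bialgebraic counit on the non-symmetric free construction. Because $\eps_1^{\otimes k}$ is tautologically symmetric under permutations of tensor factors, this counit is invariant under the wreath product stabilizers and therefore extends equivariantly to $\Coop^{symnc}$ and descends to coinvariants. Adjoining a unit as in \S2 gives the required unital and counital bialgebra $\B'$.

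For the third assertion, equivariant almost connectedness of $\Coop(1)$ together with the group-likeness of $|$ is precisely what is needed to re-run the proof of the almost-connected Hopf theorem of \S2.7 in the coinvariant setting: the ideal $\I$ generated by $1-|$ remains a coideal because $\Delta(1-|)=(1-|)\otimes(1+|)$, and the conilpotency argument (induction on cooperadic degree, using that everything left over in $\Coop(1)_{\SS_1}=\Coop(1)$ is connected by hypothesis) goes through unchanged. Commutativity of $\B'/\bar{\I}$ follows because passing to $\SS_n$-coinvariants identifies $a\otimes b$ with $b\otimes a$ in $\Coop^{symnc}$, so the free tensor product becomes symmetric. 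The main obstacle I anticipate is bookkeeping for the induced representations: one must check that $\check\gamma$, $\mu$, $\eps$, and the element $|$ all transform compatibly with the wreath product actions, and in particular that the coaction of $\check\gamma$ on an $\SS_n$-orbit lands in the correct orbit-product in $\Coop^{symnc}(k)\otimes\bigotimes\Coop(n_i)$, which amounts to a Mackey-style verification that induction commutes with the relevant limits.
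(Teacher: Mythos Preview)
Your proposal is correct and follows essentially the same approach as the paper: both reduce to the non-symmetric free construction $\Coop^{nc}$, verify that the multiplication and cooperad maps are equivariant (so that the structures descend to coinvariants via the preceding Lemma), observe that the cooperadic counit is invariant by definition and hence extends, and then invoke the non-symmetric Hopf argument verbatim. The paper's proof is simply a terse two-sentence version of what you have spelled out; your additional bookkeeping about wreath products and induced representations is the content hidden in the paper's phrase ``the rest of the statements are proved analogously to the non-symmetric case.''
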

\begin{proof}
It is clear that the free multiplication  also satisfies \eqref{multequieq} and the equivariant version of \eqref{oureqn} holds. A co--unit for a symmetric co--operad is by definition a morphism $\Coop(\{s\})\to k$ that is invariant under isomorphism, hence so is its extension. The rest of the statements are proved analogously to the non--symmetric case.

\end{proof}

\subsubsection{Infinitesimal version}

\begin{df}
A (non--\SSigma) \emph{pseudo-co--operad with multiplication} $\mu$ is a (non--\SSigma) pseudo-co--operad (cf.\ \S\ref{pseudocooperadpar}) $\Coop$  with a family of maps
\begin{equation}
\mu_{n,m}:\Coop(n)\otimes \Coop(m)\to \Coop(n+m) \quad  n,m\geq0
\end{equation}
which satisfy the equation
\begin{equation}
\label{infalgeq}
\delta\circ\mu\;\;=\;\;(\mu\otimes id)\circ \pi_{23}\circ(\delta \otimes \id)+(\mu\otimes \id)\circ(\id\otimes \delta)
\end{equation}
where  $\delta:=\check\circ$ is the  co--multiplication  \eqref{deltaeq}.
\end{df}

\begin{prop}
If $\Coop$ is a (non-\SSigma{}) co--operad with multiplication and multiplicative
right co--operadic co--unit
then the multiplication is also compatible with (the non-\SSigma{})
pseudo-co--operad structure.
\end{prop}
\begin{proof}
Straightforward using equation \eqref{infalgeq}.
\end{proof}

\begin{prop}
If the co--operad $\Coop$ is  connected, then in the Hopf quotient, the co-pre--Lie structure induces a co-Lie algebra structure on the indecomposables $\H_>/\H_>\H_>$, where $\H_>$ is reduced version of $\H$.
\end{prop}

\begin{proof}
Analogous to the proof of Theorem \ref{indecthm}.
\end{proof}

\begin{ex}
In the free case $\Coop^{nc}$, the indecomposables are precisely given by $\Coop$ and the co-pre--Lie structure is $\check\circ$. If $\Coop$ is the dual of $\O$ then  the co-Lie structure corresponds dually to  the usual Lie structure of Gerstenhaber.
\end{ex}

\subsubsection{Deformations}

Let $\C$ be the ideal spanned by $|a-a|$ of $\B$.  This is again a co--ideal by the calculation of Proposition \ref{Cprop}.  This ideal is trivial in the symmetric case. Denote by $q$ the image of $|$ under $\pi:\B\to \B/\C=:\H_q$.

\begin{thm}
Let $\Coop$ be a   co--operad with multiplication and a split bi--algebraic co--unit, then
 \begin{equation}
 \H_q(d)\simeq \bigoplus_{n\leq d} q^{d-n} \bar\B(n)
 \end{equation}
and $\H_q$ is a deformation of $\H$ given by $q\to 1$.
\end{thm}
\begin{proof}
Analogous to Proposition \ref{Hqdecompprop} and Corollary \ref{defcor}.
\end{proof}

\subsection{Grading}

\begin{prop}
For an almost connected co--operad with multiplication, the depth filtration descends to $\H_q$, $\H$ and $\H^{\it amp}$ in both the symmetric and non--$\Sigma$ case. It satisfies
$\bar \Delta_n(F^{\geq p})\subset \bigoplus F^{\geq p}\otimes F^{\geq 1}$. The depth of $1\in \H$ is $0$.
\end{prop}
\begin{proof}
Since $|$ is group--like $\Delta(a|)=\Delta(a)(|\otimes |)$, it is clear that the depth filtration descends to $\H_q$.
Any lift of $a\in \H$ to $\H_q$ is of the form $aq^k$. We define the depth of $a\in \B$ to be the minimal depth of a lift or equivalently for any lift the difference between the depth and the $q$ degree. This gives the element $1$ depth $0$.
The relation then follows from Proposition \ref{structureprop} and Lemma \ref{barlem}.
The fact that the filtration descends through amputation is clear.
\end{proof}

\section{A class of examples: Co--operads from simplicial objects}
\label{simplicialpar}

In this section, we present an important (but accessible) construction of some co--operads with multiplication. This construction is best expressed in the language of simplicial objects, and so we will first recall some of the basic notions. Some of the examples, however, can be understood with no simplicial background.  For an arbitrary set $S$, we will see that the set $X$ of all sequences or words in $S$ has the structure of a co--operad, and Goncharov's Hopf algebra may be obtained from the case $S=\{0,1\}$.  Elements of $X$ can be understood as strings of consecutive edges in the complete graph (with vertex loops) $K_S$, and further geometric intuition can be obtained by considering also strings of triangles or more generally $n$-simplices. The way to encode this construction is to think of the graph $K_S$ as defining a groupoid $G(S)$, i.e.\ a category whose morphisms are invertible. The set of objects is $S$ and for any pair of objects there is a unique invertible morphism between them. The transition to the simplicial setting is then made by considering the nerve of this category.

In fact, our construction defines a co--operad with multiplication, and hence a bi--algebra (or Hopf algebra) for any (reduced) simplicial set $X$, see Proposition~\ref{bialgfromsimp}. In this guise, we also recover the Hopf algebra of Baues.

\subsection{Recollections: the simplicial category and simplicial objects}

Let $\Delta$ be the small category whose objects are the finite non-empty ordinals $[n]=\{0<1<\dots<n\}$ and whose morphisms are the order-preserving functions between them. Of course, each $[n]$ can itself be regarded as a small category, with objects $0,1,\dots,n$ and a (unique) arrow $i\to j$ if and only if $i\leq j$, and order preserving functions are just functors. Thus $\Delta$ is a full subcategory of the category of small categories.

Among the order-preserving functions $[m]\to[n]$ one considers the following generators: the injections
${\partial^i}:[n-1]\to[n]$ which omit the value $i$, termed co--face maps, and the surjections
${\sigma^i}:[n+1]\to[n]$ which repeat the value $i$, termed co--degeneracy maps. These maps satisfy certain obvious co--simplicial relations.

For $D$ a small category, and $\C$ any category, we can consider the  contravariant functors or the covariant functors $X$ from $D$ to $\C$. For $D=\Delta$ these are termed the simplicial and the co--simplicial objects in $\C$.
A functor $D^{op}\to\Set$ is  \emph{representable} if it is $\hom_D(-,d)$ for some object $d$.  In general, such functors are also called pre--sheaves on $\D$. If $\D$ is monoidal then so is the category of pre--sheaves, with the product given by Day convolution.
The Yoneda Lemma gives a bijection between the set of natural transformations $\hom_D(-,d)\to X$ and the set $X(d)$, and in particular $d\mapsto\hom_D(-,d)$ defines a full embedding $y$ of $D$ into the functor category $\Set^{D^{op}}$. This category together with the embedding $y$ is also called the co--completion and has the universal property that any functor from $D$ to a co--complete category (one that contains all colimits) factors through it.

The following result is central to the classical theory  and in particular for us it will yield the construction of a nerve of a small category.
\begin{lem}\label{realnerve} Let $D$ be a small category and $\C$ a co--complete category.
Any functor $r:D\to\C$ has a unique extension along the Yoneda embedding to a functor $R:\Set^{D^{op}}\to C$ with a right adjoint $N$,
\begin{equation}\xymatrix@C+3em@R+1em{D\;\ar@{^{(}->}[r]^{y}\ar[rd]_r&\ar@/_1ex/[d]_R\Set^{D^{op}}\!\!\!\!\!\!\!\\&\C.\ar@/_1ex/[u]^{\dashv}_N}\end{equation}
If $r:D\to\C$ is a monoidal functor between monoidal categories, then $R$ sends monoidal functors $D^{op}\to\Set$ to monoids in $\C$.
\end{lem}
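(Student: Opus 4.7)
The plan is to verify this in three steps: construct $R$, exhibit $N$ as its right adjoint, and then handle the monoidal enhancement.

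First I would construct $R$ as the left Kan extension of $r$ along the Yoneda embedding $y:D\hookrightarrow\Set^{D^{op}}$. Concretely, for a presheaf $X:D^{op}\to\Set$, set
\[
R(X)\;=\;\colim_{(d,\,x)\in (y\downarrow X)}\,r(d)\;\cong\;\int^{d\in D}X(d)\cdot r(d),
\]
where $\cdot$ denotes the copower (which exists because $\C$ is cocomplete). The verification that $R\circ y\cong r$ is the observation that $y(d)=\hom_D(-,d)$ has a terminal object $(d,\id_d)$ in its category of elements, so the colimit collapses to $r(d)$. Uniqueness of the extension follows from the density theorem: every presheaf is canonically a colimit of representables, and any cocontinuous functor $\Set^{D^{op}}\to\C$ is determined by its values on representables.

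Second, I would define $N:\C\to\Set^{D^{op}}$ by $N(c)(d):=\hom_\C(r(d),c)$, with presheaf structure obtained by precomposition with $r$. The adjunction $R\dashv N$ is then a direct manipulation with the coend formula and the Yoneda lemma:
\[
\hom_{\C}\bigl(R(X),c\bigr)\;\cong\;\int_{d}\hom_\Set\bigl(X(d),\hom_\C(r(d),c)\bigr)\;\cong\;\hom_{\Set^{D^{op}}}\!\bigl(X,N(c)\bigr),
\]
natural in $X$ and $c$. Existence of $N$ also gives cocontinuity of $R$ for free, which in turn re-proves uniqueness.

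Third, for the monoidal statement, endow $\Set^{D^{op}}$ with the Day convolution
\[
(X\star Y)(d)\;=\;\int^{d_1,d_2}X(d_1)\times Y(d_2)\times\hom_D(d,d_1\otimes d_2),
\]
under which monoids in $\Set^{D^{op}}$ are precisely lax monoidal functors $D^{op}\to\Set$ (the intended reading of the lemma). Assuming the monoidal product of $\C$ preserves colimits in each variable (the case for $\Set,\Ab,\Vect_k,\dots$), the coend Fubini theorem together with the monoidality isomorphism $r(d_1\otimes d_2)\cong r(d_1)\otimes r(d_2)$ gives
\[
R(X\star Y)\;\cong\;\int^{d_1,d_2}X(d_1)\times Y(d_2)\cdot\bigl(r(d_1)\otimes r(d_2)\bigr)\;\cong\;R(X)\otimes R(Y),
\]
so $R$ is strong monoidal. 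A strong monoidal functor preserves monoid objects, proving the final claim. The main obstacle is precisely this last step: identifying monoids for Day convolution with lax monoidal presheaves, and checking that the symmetry/associativity coherences of $r$ and of $\otimes_\C$ transport through the coend so that $R$ is strong monoidal rather than only lax; the rest is formal bookkeeping with coends and the Yoneda lemma.
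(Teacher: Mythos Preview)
The paper states this lemma as a classical fact (``central to the classical theory of simplicial sets'') and does not supply a proof; it only remarks afterwards on the notation $(-)\otimes_D r$ and the formula $N(C)=\hom_\C(r(-),C)$. Your argument is the standard one and is correct: left Kan extension along Yoneda for $R$, the hom formula for $N$, and Day convolution for the monoidal claim.

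Two small points worth making explicit. First, the word ``unique'' in the lemma really means ``unique colimit-preserving extension''; you handle this correctly by invoking density and by noting that the existence of $N$ forces $R$ to be cocontinuous. Second, your step~3 uses that $\otimes_\C$ preserves colimits in each variable; the lemma as stated omits this hypothesis, but it is implicit in the paper's applications (chain complexes, simplicial sets, graded abelian groups), so your reading is the intended one. Your identification of ``monoidal functors $D^{op}\to\Set$'' with monoids for Day convolution is also the right interpretation in context: in the paper's main use (the geometric cobar construction $\Omega_L X = X\otimes_{\Omega\Delta}L^\bullet$), the input $X$ comes from Proposition~\ref{oplax} as a strict monoidal functor on $(\Omega\Delta)^{op}$, hence in particular a Day monoid.
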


The functor $R$ is sometimes denoted $(-)\otimes_D r$, where the tensor over $D$ is thought of as giving an object of $D$ for every pair of $D^{op}$- and $D$-objects in $\C$, analogously to the language of tensoring left and right modules or algebras over a ring.
The right adjoint $N$ is termed the \emph{nerve}, and is given on objects by
\begin{equation}
N(C)=\hom_{\C}(r(\text{-}),C).
\end{equation}

Now a simplicial object is determined by the sequence of objects $X_n$, and the face and degeneracy maps $d_i:X_n\to X_{n-1}$ and $s_i:X_n\to X_{n+1}$, given by the images of $[n]$, and $\partial^i$ and $\sigma^i$, and dually for co--simplicial objects.
Maps $X\to Y$ of (co)simplicial objects, that is, natural transformations, are just families of maps $X_n\to Y_n$ that commute with the (co)face and (co)degeneracy maps.

We write $\Delta[n]$ for the representable simplicial set $\hom_\Delta(-,[n])$ so, by Yoneda, simplicial maps $\Delta[n]\to X$ are just elements of $X_n$ and maps $\Delta[m]\to\Delta[n]$ are just order preserving maps $[m]\to[n]$. For such a map $\alpha$ we use the notation $\alpha^*=X(\alpha):X_n\to X_m$ and
\begin{equation}x_{(\alpha_0,\dots,\alpha_m)}\in X_m\end{equation} to denote the image under $\alpha^*$ of an $n$-simplex $x$ in a simplicial set $X$.

If $D=\Delta$ and $X$ is a simplicial set then $R(X)$ is usually called the \emph{realization} of a simplicial set with respect to the models $r$.
Considering for example the embedding $r:\Delta\to\cat$ we obtain the notion of the simplicial nerve of a category:
for  $C$  a small category, there is a natural bijection between the functors from $[n]$ to $C$ and the $n$-simplices of
the nerve $NC$,
\begin{equation}
N(C)_n=\hom_{\cat}([n],C).
\end{equation}
\begin{example}\label{simplicialwords}
Let $S$ be a set, and let $X(S)$ be the simplicial set given by the nerve of the {\em contractible $G(S)$ with object set $S$},

\begin{equation}
X(S)=NG(S).
\end{equation}
If $S=[n]$, for example, we may identify $G(S)$ with the fundamental groupoid of $\Delta[n]$, and
\begin{equation}X([n])\cong N\pi_1\Delta[n].\end{equation}
Giving a functor from $[n]$ to the contractible groupoid $G(S)$ is the same as giving the function on the objects, so an $n$-simplex of $X(S)$ is just a sequence of $n+1$ elements of $S$,
\begin{equation}X(S)_n\;\;=\;\; S^{n+1}\;\;=\;\;\{\,(a_0;a_1,a_2,\dots,a_{n-1};a_n)\,:\,a_i\in S\,\}.\end{equation}
In the case $S=\{0,1\}$, the groupoid $G(S)$ is
\begin{equation}\xymatrix
@C+=1.5cm{*+[r]{0}
\ar@(ul,l)_(0.42){\text{}}[]
\ar@<0.6ex>@/^0.9ex/^-{}[r]&\ar@<0.6ex>@/^0.8ex/^-{}[l]1
\ar@(ur,r)^(0.42){\text{}}[]}
\end{equation}
and the $n$-simplices of $X$ are words of length $n+1$ in the alphabet $\{0,1\}$.
\end{example}

\subsection{The operad of little ordinals}

The category of small categories, and the category of simplicial sets, can be regarded as monoidal categories with the disjoint union playing the role of the tensor product, and the initial object $\varnothing$ the neutral object. In this context, we have the following result, compare for example \cite[Example 3.6.4]{DyKa}.

\begin{figure}
    \centering
    \includegraphics{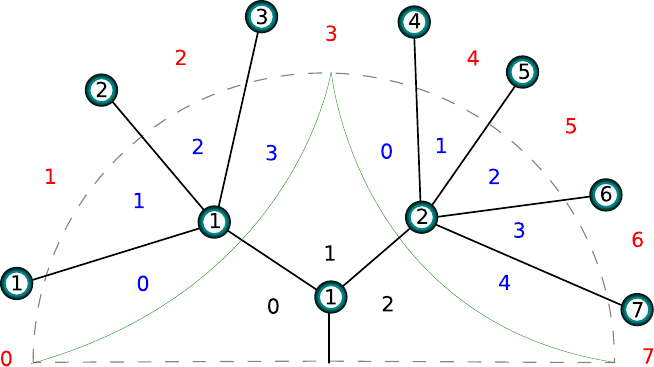}
    \caption{An example of a factorization
$\underline{7}\to\underline{2}\to\underline{1}$ of order preserving surjections and,
reading outwards from the root to the leaves,
the corresponding operad structure map
$\gamma_{3,4}:[2]\cup{\color{blue}{[3]\cup[4]}}\to\color{red}[7]$.}
    \label{fig:2347}
\end{figure}

\begin{prop}\label{deltaoperad}
The sequence of finite nonempty ordinals $([n])_{n\geq 0}$
forms an operad in the category of small categories.
For any partition $n=m_1+m_2+\dots+m_k$,
consider the subset $\{0=n_0<n_1<n_2<\dots<n_k=n\}$ of $[n]$  given by $n_r=m_1+\dots+m_r$. Then
the structure map \begin{equation}\gamma_{m_1,\dots,m_k}=(\gamma^0,\,\gamma^1,\dots,\gamma^k):[k]\cup[m_1]\cup\dots\cup[m_k]\to [n]\end{equation}
is defined by
\begin{equation}
\gamma^0(i)=n_i\quad(0\leq i\leq k)\text{ and }\;
\gamma^r(j)=n_r+j\quad(0\leq j\leq m_r,1\leq r\leq k).
\end{equation}
This operad clearly has a unit $u:\varnothing\to [1]$.
\end{prop}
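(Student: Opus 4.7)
The plan is to verify the three pieces of data making $([n])_{n\geq 0}$ into a non-$\Sigma$ operad in $(\cat,\sqcup)$: (i) well-definedness of each $\gamma_{m_1,\ldots,m_k}$ as a functor, (ii) the associativity axiom \eqref{assopeq}, and (iii) the unit axioms. Since everything in sight is an ordinal viewed as a poset, a ``functor'' is simply an order-preserving map, and two such maps agree iff they agree on objects.

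For (i), since $\sqcup$ is the coproduct in $\cat$, it suffices to check that each component $\gamma^0:[k]\to[n]$ and $\gamma^r:[m_r]\to[n]$ is order-preserving. Each $\gamma^r$ is a translation by a fixed nonnegative integer (namely $n_{r-1}$, once one fixes the evident indexing convention on the summands), hence order-preserving; and $\gamma^0$ is order-preserving because $0=n_0<n_1<\dots<n_k=n$ by construction.

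For (ii), given iterated partitions $n=\sum_r m_r$ with $m_r=\sum_s m_r^s$ and $l=\sum_r l_r$, I would evaluate both composites in \eqref{assopeq} on the objects of each summand of $[k]\sqcup\bigsqcup_r\bigl([l_r]\sqcup\bigsqcup_s[m_r^s]\bigr)$. On each summand, the resulting object of $[n]$ is expressed as a sum of partial sums of the $m_r^s$'s, and the two bracketings give the same value by associativity of addition. Since order-preserving maps between ordinals are determined by their values on objects, this establishes the equality of functors. This is the only step involving any bookkeeping but is routine.

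For (iii), the unit $u:\varnothing\to[1]$ is the (empty) functor out of the initial category, and the unit axioms reduce to instances of (i). The left unit axiom, corresponding to $k=1$ and the partition $n=n$, states that $\gamma_n\circ(u\sqcup\mathrm{id}_{[n]})=\mathrm{id}_{[n]}$; this is immediate since $\varnothing\sqcup[n]\cong[n]$ and the relevant $\gamma^1$ is the identity. The right unit axiom, corresponding to the all-ones partition $n=1+\dots+1$, gives $n_r=r$, whence $\gamma^0(i)=i$ and each $\gamma^r$ is the identity of $[1]$, so $\gamma_{1,\ldots,1}\circ(\mathrm{id}_{[n]}\sqcup u^{\sqcup n})=\mathrm{id}_{[n]}$. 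There is no substantive obstacle; the only ``hard'' part is keeping the partial-sum indexing straight in (ii).
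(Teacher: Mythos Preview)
The paper does not actually supply a proof of this proposition: it is stated as essentially evident, with a reference to \cite[Example 3.6.4]{DyKa}, and the text moves directly to its corollaries. Your direct verification of the operad axioms is therefore more than the paper provides, and the outline is sound: the components $\gamma^r$ are translations (hence monotone), associativity reduces to associativity of addition of partial sums, and the unit axioms follow from the degenerate partitions. Two small imprecisions: first, the displayed formula in the paper has $\gamma^r(j)=n_r+j$, which as written would overshoot $[n]$; the intended shift is indeed by $n_{r-1}$, and you have silently corrected this. Second, in your right-unit check the phrase ``each $\gamma^r$ is the identity of $[1]$'' is not literally true (each $\gamma^r$ maps $[1]\to[n]$), but this is irrelevant since those summands are killed by precomposition with $u:\varnothing\to[1]$, and your conclusion $\gamma_{1,\dots,1}\circ(\mathrm{id}_{[n]}\sqcup u^{\sqcup n})=\gamma^0=\mathrm{id}_{[n]}$ is correct.
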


This construction is related, via Joyal duality (see Appendix C),
to
the factorisations of maps $\underline{n}\to\underline{1}$ into order preserving surjections $\underline n\to\underline k\to\underline 1$,
where
$\underline n=\{1,\dots,n\}$.
Under the Joyal duality between end-point preserving ordered maps ---see Appendix C--- $[k]\to[n]$ and ordered maps $\underline{n}\to\underline k$,
the injection $\gamma^0:[k]\to[n]$ defined in the Proposition corresponds to the order preserving surjection $\underline n\to\underline k$ whose fibres over each $i$ have cardinality $m_i$ (see Figure~\ref{fig:2347}).

\medskip

The image of the operad structure in Proposition~\ref{deltaoperad} under the Yoneda embedding
gives:
\begin{cor}
The collection of representable simplicial sets $(\Delta[n])_{n\geq0}$ forms a unital operad in the category of simplicial sets.
\end{cor}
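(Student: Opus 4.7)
The statement follows directly from Proposition~\ref{deltaoperad} by transporting its operad structure through the nerve functor $N:\cat\to\Set^{\Delta^{op}}$, where $N(C)_n=\hom_{\cat}([n],C)$. The key observation is that $N$ is strongly monoidal with respect to the disjoint-union monoidal structures on both sides: since the ordinal $[n]$ is connected as a category, any functor $[n]\to C\sqcup D$ factors uniquely through one of the two summands, so the canonical map $N(C)\sqcup N(D)\to N(C\sqcup D)$ is a levelwise bijection and hence an isomorphism, and $N(\varnothing)=\varnothing$.

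Next I would observe that $N[n]=\Delta[n]$. Indeed, a functor between ordinals viewed as categories is exactly an order-preserving map, so $\hom_{\cat}([m],[n])=\hom_{\Delta}([m],[n])$; equivalently, the composite $\Delta\hookrightarrow\cat\xrightarrow{N}\Set^{\Delta^{op}}$ is the Yoneda embedding.

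Because a strongly monoidal functor preserves operad structures, applying $N$ componentwise to the operad $([n])_{n\geq0}$ of Proposition~\ref{deltaoperad} yields a unital operad $(\Delta[n])_{n\geq0}$ in simplicial sets. The structure maps take the form
$$N(\gamma_{m_1,\dots,m_k}):\,\Delta[k]\sqcup\Delta[m_1]\sqcup\cdots\sqcup\Delta[m_k]\longrightarrow\Delta[n],$$
and the unit $u:\varnothing\to[1]$ in $\cat$ becomes the unit $\varnothing\to\Delta[1]$ in simplicial sets; the associativity and unit axioms transport automatically from $\cat$ by functoriality and monoidality of $N$. The only content is the monoidality of $N$, which as noted reduces to the connectedness of each $[n]$, so no serious obstacle arises.
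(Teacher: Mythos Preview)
Your proof is correct and essentially the same as the paper's, which simply says the corollary is ``the image of the operad structure in Proposition~\ref{deltaoperad} under the Yoneda embedding.'' You have made this one-line argument explicit by working with the nerve functor, verifying its strong monoidality with respect to disjoint union, and noting that on $\Delta\subset\cat$ the nerve coincides with the Yoneda embedding; this is exactly the content hidden in the paper's phrase, so the approaches agree.
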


If $X$ is a simplicial set, then the unital operad structure on the sequence $\Delta[n]$, $n\geq 0$, induces a co--unital co--operad structure on the sequence $X_n=\hom(\Delta[n],X)$.
That is, the sequence $(X_n)_{n\geq0}$ forms a co--unital co--operad with
\begin{align}
\nonumber
X_n\xrightarrow{\displaystyle\check\gamma_{m_1,\dots,m_k}}
&\qquad\; X_k \;\quad  \times \quad\; X_{m_1}\;\;\times\;\;\dots\;\;\times\;\; X_{m_k}\\
\label{simpcoopformula}x\;\;\quad\longmapsto\;\quad&\left(x_{(n_0,n_1,\dots,n_k)},\,x_{(n_0,n_0+1,\dots,n_1)},\dots,x_{(n_{k-1},n_{k-1}+1,\dots,n_k)})
\right)
\end{align}
where $0=n_0<n_1<n_2<\dots<n_k=n$ are  given by $n_r=m_1+\dots+m_r$ as usual.
The co--unit is given by the unique map
$X_1\to \{*\}$.

More generally:

\begin{cor}\label{co--operadsimplicial}
Let $X$ be a simplicial object in a category $\C$ with finite products. Then the sequence $(X_n)_{n\geq0}$ forms a co--unital co--operad in $\C$.
\end{cor}

\begin{example}\label{Gongamma}
The set of all words in a given alphabet $S$ is naturally a simplicial set (see Example \ref{simplicialwords} above) and so by Corollary \ref{co--operadsimplicial} it forms a co--unital co--operad $X$ in the category of sets. The elements of arity $n$ in this co--operad are the words of length $n+1$ in $S$,
\begin{equation}X_n\;\;=\;\; S^{n+1}\;\;=\;\;\{\,(a_0;a_1,a_2,\dots,a_{n-1};a_n)\,:\,a_i\in S\,\}\end{equation}
and the operation $\check\gamma_{m_1,\dots,m_k}$ sends such an element  $(a_0;a_1,a_2,\dots,a_{n-1};a_n)$ to
\begin{equation}
\left((a_{n_0};a_{n_1},\dots;a_{n_k}),\;(a_{n_0};a_{n_0+1},\dots;a_{n_1}),\dots,(a_{n_{k-1}};a_{n_{k-1}+1},\dots;a_{n_k})
\right)
\end{equation}
where $n_0=0$, $n_k=n$ and $n_r-n_{r-1}=m_r$.
\end{example}

This construction can also be carried out in an algebraic setting.
\begin{prop}
Let $X$ be a simplicial set, and let $\Coop(n)$ be the free Abelian group on the set $X_n$, for each $n\geq 0$. Then $\Coop$ forms a co--unital co--operad in the category of Abelian groups, with the co--operadic structure given by
\begin{align*}
\nonumber
\Coop(n)&\stackrel{\displaystyle\check\gamma\;}\longrightarrow
\quad \Coop(k) \qquad  \otimes \qquad \Coop(m_1)\;\;\otimes\;\;\dots\;\;\otimes\;\; \Coop(m_k)\\
x&\longmapsto
x_{(n_0,n_1,\dots,n_k)}\;\:\otimes\;\; x_{(n_0,n_0+1,\dots,n_1)}\otimes\;\dots\;\otimes x_{(n_{k-1},n_{k-1}+1,\dots,n_k)})
\end{align*}
and the co--unit given by the augmentation
\begin{equation}
\Coop(1)\longrightarrow\mathbb{Z}.
\end{equation}
\end{prop}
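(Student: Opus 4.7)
The plan is to deduce this proposition directly from Example~\ref{Gongamma} and Corollary~\ref{cooperadsimplicial} by applying the free abelian group functor $\mathbb{Z}[-]\colon\Set\to\Ab$. The key observation is that $\mathbb{Z}[-]$ is strong monoidal with respect to cartesian product on $\Set$ and tensor product on $\Ab$: there is a natural isomorphism $\mathbb{Z}[S\times T]\cong\mathbb{Z}[S]\otimes\mathbb{Z}[T]$ sending a basis element $(s,t)$ to $s\otimes t$. Any strong monoidal functor transports counital cooperads to counital cooperads, since the cooperad axioms and counit axioms are equational diagrams whose constituents (product objects, structure maps, terminal objects/units) are sent to the corresponding objects and maps on the target side.

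Concretely, I would proceed as follows. First, by Corollary~\ref{cooperadsimplicial} applied to the simplicial set $X$ viewed in $\Set$, the sequence $(X_n)_{n\geq 0}$ is a counital cooperad in sets, with structure maps
\[
\check\gamma_{m_1,\dots,m_k}^{\Set}\colon X_n\longrightarrow X_k\times X_{m_1}\times\cdots\times X_{m_k}
\]
given by the formula in Example~\ref{Gongamma}, and counit $X_1\to\{*\}$. Second, setting $\Coop(n):=\mathbb{Z}[X_n]$ and applying $\mathbb{Z}[-]$ to these maps, combined with the monoidality isomorphism, gives maps
\[
\check\gamma_{m_1,\dots,m_k}\colon\Coop(n)\longrightarrow\Coop(k)\otimes\Coop(m_1)\otimes\cdots\otimes\Coop(m_k)
\]
whose effect on a basis element $x\in X_n\subset\Coop(n)$ is exactly the tensor product of the components displayed in the statement. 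Similarly, $\mathbb{Z}[\{*\}]=\mathbb{Z}$ so the counit $X_1\to\{*\}$ becomes the augmentation $\Coop(1)\to\mathbb{Z}$.

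Finally, I must check the cooperad axioms \eqref{coassopeq} and the counit identities. But this is automatic: monoidality of $\mathbb{Z}[-]$ means that any commuting diagram of finite products and maps in $\Set$ yields a commuting diagram of tensor products and maps in $\Ab$ after applying the functor, because the coherence isomorphisms $\mathbb{Z}[S\times T]\cong\mathbb{Z}[S]\otimes\mathbb{Z}[T]$ are natural in $S$ and $T$. Thus the coassociativity and counit diagrams verified in $\Set$ by Corollary~\ref{cooperadsimplicial} transport verbatim to $\Ab$. There is essentially no obstacle; the only point requiring mild care is the bookkeeping of the coherence isomorphism to ensure the permutation $\pi$ on the right-hand side of \eqref{coassopeq} matches on both sides, which however follows from naturality of the symmetry of $\otimes$ together with that of the cartesian product.
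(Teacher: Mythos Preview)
Your proposal is correct and follows essentially the same approach as the paper: the paper's proof is the one-line observation that the free abelian group functor carries finite cartesian products of sets to tensor products of abelian groups, so applying it to the set-level cooperad structure of \eqref{simpcoopformula} (i.e.\ Corollary~\ref{cooperadsimplicial}) yields the claim. Your version simply spells out in more detail why a strong monoidal functor transports counital cooperads to counital cooperads.
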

\begin{proof}
This follows by applying free Abelian group functor (which carries finite cartesian products of sets to tensor products) to the co--operad structure considered in \eqref{simpcoopformula}.
\end{proof}

From section \ref{subsect:free}
we therefore have

\begin{prop}\label{bialgfromsimp}
Let $X$ be a simplicial set.
The co--operad structure $\Coop$ on $(\mathbb{Z}X_n)_{n\geq1}$ of the previous proposition extends to a structure of a co--operad with (free) multiplication, and hence to a graded bi--algebra structure, on the free tensor algebra
\begin{equation}
\B(X)=\bigoplus_n\Coop^{nc}(X)(n)=\bigoplus_{n_1,n_2,\dots\geq 1} \bigotimes_i\mathbb{Z}X_{n_i}
\end{equation}
generated by $X$, where elements of $\Z X_n$ have degree $n-1$.
\end{prop}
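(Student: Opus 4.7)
The plan is to assemble this proposition by composing the constructions already in place: the previous proposition provides $\Coop$ as a counital cooperad in $\Ab$ with $\Coop(n)=\mathbb{Z}X_n$, and the free construction of \S\ref{subsect:free} converts any such cooperad into a cooperad with multiplication $\Coop^{nc}$ by setting $\Coop^{nc}(n):=\bigoplus_{(n_1,\dots,n_k):\sum n_i=n}\Coop(n_1)\odo\Coop(n_k)$ with $\mu$ equal to tensor concatenation. Theorem~\ref{bialgthm} (together with the proposition immediately following the free construction) then asserts that $\B(X)=\bigoplus_n\Coop^{nc}(n)$ with multiplication $\mu$ and comultiplication $\Delta=(\id\otimes\mu)\check\gamma$ is a (non-unital, non-counital) bialgebra. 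So the first half of the statement requires no new verification; I would simply cite these results in sequence.

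The only substantive point to check is that the proposed grading (with $\Z X_n$ in degree $n-1$) is compatible with both the product and the coproduct. Compatibility with the multiplication is immediate, since $\mu$ is tensor concatenation and degrees of tensor words are additive. Compatibility with the coproduct is a one-line counting argument: by \eqref{simpcoopformula}, for $x\in X_n$ the image $\check\gamma_{m_1,\dots,m_k}(x)$ lies in $\Coop(k)\otimes\Coop(m_1)\odo\Coop(m_k)$, where the left factor contributes degree $k-1$ and the right factors contribute total degree $\sum_{r=1}^k(m_r-1)=n-k$, summing to $n-1$, which matches the degree of $x$. Thus $\check\gamma$ preserves the grading on the subspace $\Coop\subset\Coop^{nc}$, and by the multiplicative compatibility \eqref{compateq} used to extend $\check\gamma$ from $\Coop$ to $\Coop^{nc}$, this degree preservation propagates to all tensor words; applying $\id\otimes\mu$ then yields a degree-preserving $\Delta$ on all of $\B(X)$.

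The only mild subtlety is bookkeeping: within $\Coop^{nc}(n)$ different summands carry different depth $k$ and hence different total degree $n-k$, so the grading by $n-1$ on generators does not coincide with the operadic arity filtration on $\B(X)$. I would dispel this by explicitly identifying the $d$-th graded piece as $\B(X)_d=\bigoplus_{k\geq 1}\bigoplus_{(n_1,\dots,n_k):\sum(n_i-1)=d}\Z X_{n_1}\odo \Z X_{n_k}$, and noting that under the depth filtration of Proposition~\ref{structureprop} this is a refinement compatible with both filtration and grading. No genuine obstacle arises; the proof is essentially a verification that the combinatorics of \eqref{simpcoopformula} match the stated degree convention.
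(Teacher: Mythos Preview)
Your proposal is correct and follows the same approach as the paper, which simply introduces the proposition with ``From section~\ref{subsect:free} we therefore have'' and provides no further proof. You are more explicit than the paper in verifying that the degree convention $\deg(\Z X_n)=n-1$ is respected by $\check\gamma$ and $\Delta$; this is a welcome addition, since the paper leaves that arithmetic check entirely to the reader.
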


\subsubsection{Goncharov's first Hopf algebra}

Let $S$ be the set $\{0,1\}$. We considered in Example~\ref{simplicialwords} the contractible groupoid $G(S)$ with object set $S$, and the simplicial set $X=X(S)$ given by its simplicial nerve. If we denote the simplices of $X_n$ by tuples $(a_0;a_1,\dots,a_{n-1};a_n)$ as in Example \ref{Gongamma} and apply Proposition \ref{bialgfromsimp} we obtain a graded bi--algebra
\begin{equation}
\B(X)=\Z[(a_0;a_1,\dots,a_{n-1};a_n);a_i\in\{0,1\}]
\end{equation}
with the co--product that sends a polynomial generator $(a_0;a_1,\dots,a_{n-1};a_n)$ in degree $n-1$ to
\begin{equation}
\sum_{0=n_0<n_1<\dots<n_k=n}(a_{n_0};a_{n_1},\dots;a_{n_k})\;\otimes\; \prod_{i=0}^{k-1}
(a_{n_i};a_{n_i+1},\dots;a_{n_{i+1}})
\end{equation}
When we identify all generators in degree 0 we obtain Goncharov's connected graded Hopf algebra $\HG$, as in Theorem \ref{gone}.

For any simplicial set $X$, let $C_n(X)$ be the free Abelian group on the $n$-simplices $X_n$. This defines a chain complex $(C(X),d_X)$ where
\begin{equation}
d_X(x)=\sum_{i=0}^n(-1)^id_ix.
\end{equation}
Diagonal approximation makes $CX$ a differential graded co--algebra,
\begin{equation}
C(X)\longrightarrow C(X\times X)\longrightarrow CX \otimes CX
\end{equation}
whose classical cobar construction is the tensor algebra on the desuspension of the reduced co--algebra
\begin{equation}
\Omega CX=(T\Sigma^{-1}\overline{C}X,d_\Omega)
\end{equation}
where the differential $d_\Omega$ is formed from $d_X$ and the co--product. For the moment, however, we merely observe that if one takes the symmetric rather than the tensor algebra then the underlying graded Abelian group is isomorphic to Goncharov's $\HG$.

\subsection{Simplicial strings}
\label{stringsec}

For $(D,\otimes)$ a strict monoidal category, consider $(\Omega' D,\boxtimes)$ the strict monoidal category generated by $D$ together with morphisms $a\boxtimes b\to a\otimes b$ for objects $a,b$ of $D$, subject to the obvious naturality and associativity relations.
In this way a strict monoidal functor on $\Omega' D$ is exactly a (strictly unital) \emph{lax monoidal} functor on $D$: a functor $F$ on $D$ together with maps $Fa\otimes Fb\to F(a\otimes b)$ satisfying appropriate naturality and associativity conditions.

\begin{df}
\label{doublebasedef}
Let $\Delta_{*,*}$ be the strict monoidal category given as the subcategory of $\Delta$ containing just the generic (that is, end-point preserving) maps $[m]\to [n]$, with the monoidal structure $[p]\otimes [q]=[p+q]$ given by identifying $p\in[p]$ and $0\in[q]$.

We define the category of \emph{simplicial strings} $\Omega\Delta$ to be  the strict monoidal category $\Omega'\Delta_{*,*}$.
\end{df}
This agrees with Baues' construction in \cite[Definition 2.7]{BauesMAMS}.
Now a contravariant monoidal functor on the category of simplicial strings is just an oplax monoidal functor on $\Delta_{*,*}^{op}$.
Explicitly, if $\C$ is a category with the cartesian monoidal structure, then to give a monoidal functor $(\Omega\Delta)^{op}\to\C$ is to give a functor $X:\Delta_{*,*}^{op}\to\C$ together with associative natural transformations $\mu_{p,q}=(\lambda_{p,q},\rho_{p,q}):X_{p+q}\to X_p\times X_q$.
Note that $X$ becomes a simplicial object, if we define outer face maps $X_n\to X_{n-1}$ by $d_0=\rho_{1,n-1}$ and $d_n=\lambda_{n-1,1}$. Moreover these determine all maps $\rho_{p,q}$ and $\lambda_{p,q}$ via the naturality conditions $(d_1^{p-1}\times\id)\mu_{p,q}=\mu_{1,q}d_1^{p-1}$ and $(\id\times d_1^{q-1})\mu_{p,q}=\mu_{p,1}d_{p+1}^{q-1}$.
Thus we have:
\begin{prop}\label{oplax}Let $\C$ be a cartesian monoidal category. Then the following categories are equivalent:
\begin{enumerate}
\renewcommand{\theenumi}{\roman{enumi}}
\item The category of simplicial objects in $\C$.
\item The category of oplax monoidal functors $\Delta_{*,*}^{op}\to\C$.
\item The category of monoidal functors $(\Omega\Delta)^{op}\to \C$.
\end{enumerate}
Given a simplicial object $X$, the corresponding oplax monoidal functor is given by the restriction of $X$ to the endpoint preserving maps, with the structure map
\begin{equation}(d_{p+1}^q,d_0^p):X_{p+q}\to X_p\times X_q\end{equation}
\end{prop}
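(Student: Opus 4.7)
The equivalence between the second and third categories is essentially tautological from the construction of $\Omega' D$: a strict monoidal functor $(\Omega\Delta)^{op}\to\C$ is by definition a functor $\Delta_{*,*}^{op}\to\C$ together with comparison morphisms $X_{p+q}\to X_p\times X_q$ (the images of the formal arrows $[p]\otimes[q]\to [p]\boxtimes[q]$ in $\Omega'\Delta_{*,*}$) subject to the naturality and associativity imposed on them, which are precisely the data of an oplax monoidal functor on $\Delta_{*,*}^{op}$. The content of the proposition is therefore the equivalence between simplicial objects in $\C$ and oplax monoidal functors $\Delta_{*,*}^{op}\to\C$.

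Going forwards, from a simplicial object $X$ the plan is to restrict to the endpoint preserving subcategory to obtain a functor $\Delta_{*,*}^{op}\to\C$, and to define
$$\mu_{p,q}\;=\;(d_{p+1}^{q},\;d_0^{p}):\,X_{p+q}\longrightarrow X_p\times X_q,$$
where $d_{p+1}^q$ and $d_0^p$ are iterated face operators; geometrically this is the ``front $p$-face, back $q$-face'' splitting of a $(p+q)$-simplex. Naturality of $\mu_{p,q}$ in $[p]$ and $[q]$ with respect to inner face maps $d_i$ ($1\le i\le p+q-1$) and all degeneracies $s_i$ follows from the standard simplicial identities, since such operators commute, after the appropriate reindexing, with iterated $d_0$'s and $d_{p+1}$'s. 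Associativity, $(\mu_{p,q}\times\id)\mu_{p+q,r}=(\id\times\mu_{q,r})\mu_{p,q+r}$, will reduce to three iterated applications of $d_id_j=d_{j-1}d_i$ among the outer face operators, corresponding to the statement that the front $p$-face, the middle $q$-face, and the back $r$-face of a $(p+q+r)$-simplex can each be obtained by two different iterated compositions.

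Going backwards, given an oplax monoidal $X$ with components $\mu_{p,q}=(\lambda_{p,q},\rho_{p,q})$, I read off the missing outer face operators as $d_0:=\rho_{1,n-1}$ and $d_n:=\lambda_{n-1,1}$, while the inner $d_i$ and the degeneracies $s_i$ are inherited from the underlying $\Delta_{*,*}^{op}$-functor. The observation that makes this unambiguous is that by associativity and naturality of $\mu$ one has
$$(d_1^{p-1}\times\id)\mu_{p,q}=\mu_{1,q}\,d_1^{p-1},\qquad (\id\times d_1^{q-1})\mu_{p,q}=\mu_{p,1}\,d_{p+1}^{q-1},$$
so every $\mu_{p,q}$ is already determined by the unary $\mu_{1,q}$ and $\mu_{p,1}$, and hence ultimately by $d_0$ and $d_n$. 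This makes the two constructions mutually inverse on objects, and functoriality in morphisms is immediate because the structural components of $\mu$ are built out of face operators which any morphism of simplicial objects commutes with, and conversely any monoidal natural transformation commutes with $d_0$ and $d_n$ as extracted.

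The main obstacle is the bookkeeping of simplicial identities in the reverse direction: each of the standard relations $d_id_j=d_{j-1}d_i$, $d_is_j=\dots$, and $s_is_j=\dots$ in which one or both indices hit a newly introduced $d_0$ or $d_n$ has to be matched either with an associativity square for $\mu$ (when both outer faces appear) or with a naturality square of $\mu$ against an inner face or degeneracy in $\Delta_{*,*}^{op}$ (when only one appears). This is tedious but mechanical, broken down according to the relative positions of $i$, $j$ and the endpoints; no conceptual surprises arise beyond the careful matching of indices.
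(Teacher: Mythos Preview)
Your proposal is correct and follows essentially the same approach as the paper: the paper's argument (given in the paragraph preceding the proposition rather than as a separate proof) also identifies the second--third equivalence as tautological from the definition of $\Omega'D$, defines the outer faces by $d_0=\rho_{1,n-1}$ and $d_n=\lambda_{n-1,1}$, and observes the same naturality relations $(d_1^{p-1}\times\id)\mu_{p,q}=\mu_{1,q}d_1^{p-1}$ and $(\id\times d_1^{q-1})\mu_{p,q}=\mu_{p,1}d_{p+1}^{q-1}$ to recover all $\mu_{p,q}$ from the extremal ones. You have simply been more explicit about the simplicial-identity bookkeeping than the paper, which leaves it implicit.
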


\begin{df}
An \emph{interval object}~\cite{crscobar} (or a \emph{segment}~\cite{BM2})
in a monoidal category $(\D,\otimes,\unit)$
is an augmented monoid $(L,L^{\otimes 2}\xrightarrow{\mu} L,\unit\xrightarrow{\eta} L,L\xrightarrow{\varepsilon} \unit)$ together with an \emph{absorbing object}, that is, $\overline\eta:\unit\to L$ satisfying $\mu(\id_L\otimes\overline\eta)=\overline\eta\varepsilon=\mu(\overline\eta\otimes\id_L)
$, $\varepsilon\overline{\eta}=\id_I$.
\end{df}

To any augmented monoid $L$ one associates a simplicial object or,
under Joyal duality, a covariant functor $L^\bullet$ on $\Delta_{*,*}$ with
$L^0=L^1=\unit$, $L^n=L^{\otimes(n-1)}$,
\begin{align*}
s^0=\varepsilon\otimes\id,\;
s^n=\id\otimes\varepsilon,\;
s^i=\id\otimes\mu\otimes\id:L^{\otimes n}\to L^{\otimes(n-1)},
\\
d^i=\id\otimes\eta\otimes\id:L^{\otimes (n-2)}\to L^{\otimes (n-1)},
\end{align*}
If in addition $L$ has an absorbing object then
$L^\bullet$ has a lax monoidal structure
\begin{equation}\id\otimes\overline\eta\otimes\id:L^{\otimes (p-1)}\otimes L^{\otimes (q-1)}\to L^{\otimes (p+q-1)}\end{equation}
so we obtain a monoidal functor $L^\bullet:\Omega\Delta\to\D$.
\begin{df}
Let $X$ be a simplicial set, or the corresponding contravariant monoidal functor on the category of simplicial strings (Proposition~\ref{oplax}).
Baues' \emph{geometric cobar construction} $\Omega_LX$ with respect to an interval object $L$  in a co--complete monoidal category $\D$ is defined as the monoid object in $\D$ given by the realization functor  (see Lemma~\ref{realnerve}),
\begin{equation}
\Omega_L(X)=X\otimes_{\Omega\Delta}L^\bullet
\end{equation}
\end{df}

We have four fundamental examples:
\begin{enumerate}
\item Let $L=[0,1]$ be the unit interval in the category of CW complexes, with unit and absorbing objects $0,1:\{*\}\to [0,1]$, and multiplication given by $\max:[0,1]^2\to[0,1]$. Then the geometric cobar construction on a 1-reduced simplicial set is homotopy equivalent to the loop space of the realization of $X$.
\item Taking the cellular chains on the previous interval object we gives an interval object $L$ in the category of chain complexes. In this case $\Omega_L(X)$ coincides with Adams' cobar construction, which has the same homology as the loop space on $X$, if $X$ is 1-reduced.
\item If we forget the boundary maps in example (2) we obtain an interval object $L$ in the category of graded Abelian groups, and $\Omega_L(X)$ coincides as an algebra with the object $\B(X)$ of Proposition \ref{bialgfromsimp}: it is just the free tensor algebra whose generators in dimension $n$ are the $n+1$-simplices of $X$.
\item Let $L=\Delta[1]$ in the category of simplicial sets, with unit and absorbing object $d^1$ and $d^0:\Delta[0]\to\Delta[1]$, and multiplication $\mu:\Delta[1]^2\to\Delta[1]$ defined by \begin{equation}\mu_n([n]\xrightarrow{x}[1],[n]\xrightarrow{x'}[1])=(i\mapsto\max(x_i,x_i')).\end{equation}
Berger has observed that, up to group completion, $\Omega_LX$ has the same homotopy type as the simplicial loop group $GX$ of Kan.
\end{enumerate}

Note that the CW complex given by the simplicial realization of $\Delta[1]^2$ does not have the same cellular structure as $[0,1]^2$: to relate examples (1--3) with (4) requires appropriate diagonal approximation and shuffle maps.

In example (3) the multiplication is free, and we have seen that the co--operad structure $\check\gamma$ on the simplicial set $X$ gives a comultiplication
and hence a bi--algebra structure on $\Omega_L(X)=\B(X)$.
Baues showed that essentially the same co--product gives a differential graded bi--algebra structure on $\Omega_L(X)$ in example (2), and used this to iterate the classical cobar construction to obtain an algebraic model of the double loop space. In example (4) we remain in the category of simplicial sets, and we have the following result:

\begin{prop}
Let $X$ be a simplicial set, and $\Omega_L(X)$ the simplicial monoid given by the geometric co--bar construction on $X$ with respect to the interval object $L=\Delta[1]$. Then the co--operad structure $\check\gamma$ on $X$ induces a map
\begin{equation}
\Omega_L(X)_n\longrightarrow\prod_{m_1+\dots+m_k=n}\Omega_L(X)_{k-1}\times \Omega_L(X)_{n-k}
\end{equation}
for each $n,k\geq1$.
\end{prop}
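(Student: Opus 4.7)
The first step is to describe $n$-simplices of $\Omega_L(X)$ concretely. By the realization formula (Lemma~\ref{realnerve}), an $n$-simplex of the coend $X\otimes_{\Omega\Delta}L^\bullet$ is an equivalence class of pairs $(x,\alpha)$ where $x\in X_r$ for some $r\geq 1$ and $\alpha$ is an $n$-simplex of $L^r=\Delta[1]^{r-1}$, that is, an $(r-1)$-tuple of monotone cut heights $[n]\to[1]$ specifying how the letters of a word in $X$ are glued into an $n$-simplex. The identifications are generated by the endpoint-preserving simplicial operators of $\Delta_{*,*}$ together with the lax monoidal structure maps $\boxtimes\to\otimes$.

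Second, I would construct the map on representatives using $\check\gamma$ and the free multiplication from Proposition~\ref{bialgfromsimp}. For a representative $(x,\alpha)$ with $x\in X_r$ and for each composition of $r$, the cooperadic coproduct~\eqref{simpcoopformula} decomposes $x$ into a ``principal'' $k$-simplex $x^{(0)}$ and a word $(x^{(1)},\dots,x^{(k)})$ in $X$. Assembled letter by letter across a word in $\Omega_L(X)_n$ whose splitting realises a prescribed composition $m_1+\dots+m_k=n$, and splitting the $\Delta[1]$-gluing data $\alpha$ along the same cuts---in the manner Joyal-dual to the operadic splicing of Proposition~\ref{deltaoperad}---one obtains a pair in $\Omega_L(X)_{k-1}\times\Omega_L(X)_{n-k}$, one for each composition.

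Third, I would verify well-definedness on the coend classes. Compatibility with the endpoint-preserving simplicial operators follows from the naturality of $\check\gamma$, whose formula~\eqref{simpcoopformula} is already expressed in terms of face and degeneracy maps of $X$; compatibility with the lax monoidal identifications reduces directly to the cooperad-with-multiplication axiom~\eqref{oureqn} which was established in Proposition~\ref{bialgfromsimp} for the free tensor algebra $\B(X)$.

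\textbf{Main obstacle.} The principal technical difficulty is the simplicial bookkeeping of the $\Delta[1]$-coordinates: the cuts in $\alpha$ must be arranged so that the image lands in precisely the claimed bidimension $(k-1,n-k)$ rather than in some other pair of simplicial degrees. This is a shuffle/Eilenberg--Zilber style reorganisation, invisible at the level of the underlying graded bialgebra of example~(3) but essential for passing from the chain-level differential graded bialgebra of Baues in example~(2) to the simplicial monoid setting of example~(4).
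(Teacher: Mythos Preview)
Your outline is on the right track and matches the paper's strategy at the level of \emph{work on representatives $(x,\alpha)\in X_r\times\Delta[1]^{r-1}$, apply $\check\gamma$ to $x$, split $\alpha$ accordingly, then pass to the coend}. But you are making the $\alpha$-splitting much harder than it is, and this is where your write-up goes astray.

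The paper's key observation is purely combinatorial: the operad map $\gamma_{m_1,\dots,m_k}:[k]\cup[m_1]\cup\dots\cup[m_k]\to[n]$ of Proposition~\ref{deltaoperad} restricts to a \emph{bijection} of interior points
\[
\underline{k-1}\;\cup\;\underline{m_1-1}\;\cup\;\dots\;\cup\;\underline{m_k-1}\;\longrightarrow\;\underline{n-1},
\]
and hence induces an isomorphism of simplicial sets
\[
\Delta[1]^{n-1}\;\xrightarrow{\ \cong\ }\;\Delta[1]^{k-1}\times\Delta[1]^{m_1-1}\times\dots\times\Delta[1]^{m_k-1}.
\]
This is nothing more than a permutation of Cartesian factors. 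Pairing it with $\check\gamma_{m_1,\dots,m_k}$ on the $X$-side gives a map
\[
X_n\times\Delta[1]^{n-1}\longrightarrow \bigl(X_k\times\Delta[1]^{k-1}\bigr)\times\prod_{i=1}^k\bigl(X_{m_i}\times\Delta[1]^{m_i-1}\bigr),
\]
which descends to $Y=\Omega_L(X)$; the right-hand product of $k$ pieces is sent into $Y$ via the monoid multiplication. That is the whole proof.

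Consequently your ``main obstacle'' paragraph is a red herring: there is no Eilenberg--Zilber or shuffle reorganisation here, because $L^r=\Delta[1]^{r-1}$ is already a Cartesian power and the decomposition is a literal regrouping of factors. The landing in the correct bidimension is automatic from the exponent count $(k-1)+\sum(m_i-1)=n-1$. Your step~2 also drifts between a composition of $r$ (the cooperad degree of $x$) and a composition of $n$; in the paper these are the same index, since the map is built on the piece $X_n\times\Delta[1]^{n-1}$ of the coend and the partitions $m_1+\dots+m_k=n$ are partitions of that cooperad degree.
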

\begin{proof}
Let $Y=\Omega_L(X)$. For each partition $m_1+\dots+m_k=n$ the co--operad structure map $\check\gamma_{m_1,\dots,m_k}$ of \eqref{simpcoopformula} induces a map
$Y_{n-1}\longrightarrow Y_{k-1}\times Y_{n-k}$ as follows.
The map $\gamma_{m_1,\dots,m_k}$ of Proposition \ref{deltaoperad} restricts to give a bijection $\underline{k-1}\cup\underline{m_1-1}\cup\dots\cup\underline{m_k-1}\to\underline{n-1}$ and hence an isomorphism
\begin{equation}\Delta[1]^{n-1}\longrightarrow \Delta[1]^{k-1}\times  \Delta[1]^{m_1-1}\times\dots\times  \Delta[1]^{m_k-1}.\end{equation}
Together with the map $\check\gamma_{m_1,\dots,m_k}$ of \eqref{simpcoopformula} this defines a map
\begin{equation}X_n\times\Delta[1]^{n-1}\longrightarrow X_{k}\times \Delta[1]^{k-1}\times (X_{m_1}\times \Delta[1]^{m_1-1}\times\dots\times X_{m_k}\times \Delta[1]^{m_k-1})\end{equation}
which induces the map on $Y$ as required.
\end{proof}

\subsection{Comparison with Goncharov's second Hopf algebra}

We have seen above that Goncharov's first Hopf algebra $\HG$ and Baues Hopf algebra $\Omega_L(X)$ are closely related. The differences between Baues' and Goncharov's algebras are as follows
\begin{enumerate}
\item Baues' Hopf algebra has a differential, and the underlying graded Abelian group $\B(X)$ is the free tensor algebra, that is, a free associative algebra. No differential is given on Goncharov's algebra, which is a free polynomial algebra, that is, a free commutative and associative algebra.
\item To obtain a model for the double loop space Baues requires $X$ to have trivial 2-skeleton (only one vertex, one degenerate edge, and one degenerate 2-simplex), but to construct Goncharov's bi--algebra we take $X$ to be 0-co--skeletal (a unique $n$-simplex for any $(n+1)$-tuple of vertices).
In the latter construction, however, one may still impose the relations $x\sim 1$ and $x\sim 0$  for 1- and 2-simplices $x$ \emph{after} taking the polynomial algebra (compare \eqref{emptycondeq} and \eqref{2skeq} respectively).
\end{enumerate}

For Goncharov's second Hopf algebra $\tilde\HG$, and the variants due to Brown, one imposes extra relations such as the shuffle formula \eqref{shufflecond}. This has the following natural expression in the language of the cobar construction.
Let $X=X(S)$, the 0-co--skeletal simplicial set with vertex set $X_0=S$.
The cobar construction $\Omega_LX$ is a colimit of copies of  $C(x_{n+1})=L^{\otimes n}$ for each $(n+1)$-simplex $x_{n+1}=(s;w_n;s')$, where $w_n$ is a word of length $n$ in the alphabet $S$.
In a symmetric monoidal category
each $(p,n-p)$-shuffle corresponds to a natural isomorphism
$L^{\otimes p}\otimes L^{\otimes(n-p)}\to L^{\otimes n}$ and the content of the shuffle relation is that this isomorphism is also obtained from the shuffle of the letters of a word $w_p$ with a word $w_{n-p}$ to obtain a word $w_n$.

\subsection{Cubical structure}
\label{cubicalpar}
Baues' and Goncharov's co--multiplications come from path or loop spaces and  may be seen having natural cubical structure.
The space of paths $P$ from $0$ to $n$ in the $n$-simplex $|\Delta[n]|$
is a cell complex homeomorphic to the $(n-1)$-dimensional cube.

  Cubical complexes have a natural  diagonal approximation,
\begin{equation}
\!\!\!\! \!\!\delta:
 P=[0,1]^{n-1}\xrightarrow{\;\;\cong\;\;}
\!\!\!\! \!\!\!\! \!\!
\bigcup_{K\cup L=\{1,\dots,n-1\}}
\!\!\!\! \!\!\!\! \!\!
\partial_K^-[0,1]^{n-1} \times
\partial_L^+[0,1]^{n-1} \xrightarrow{\;\;\subset\;\;}
P\times P
\end{equation}

 One can identify faces $\partial^-_i$
 of the cube $P$ as the spaces of paths through the
faces $x_{(0,\dots,\widehat i,\dots,n)}$
of the $n$-simplex $x$. Faces $\partial^+_i$ are products of a $(i-1)$-cube and $(n-i-1)$-cube:
the spaces of paths through
$x_{(0,\dots,i)}$ and  through $x_{(i,\dots,n)}$.

The term for $L=\{i_1,\dots,i_{k-1}\}$  under this identification is
\begin{equation}
x_{(0,i_1,\dots,i_{k-1},n)}\;\times\;
x_{(0,1,\dots,i_1)}\, x_{(i_1,i_1+1,\dots,i_2)}\,\ldots\,
x_{(i_{k-1},i_{k-1}+1,\dots,n)}.
\end{equation}
which reproduces the summands of the co--product.

The cubical structure is illustrated for the case of $\Delta^3$ in Figure \ref{squarefig}

\begin{figure}[htbp]
    \centering
    \includegraphics[width=.6\textwidth]{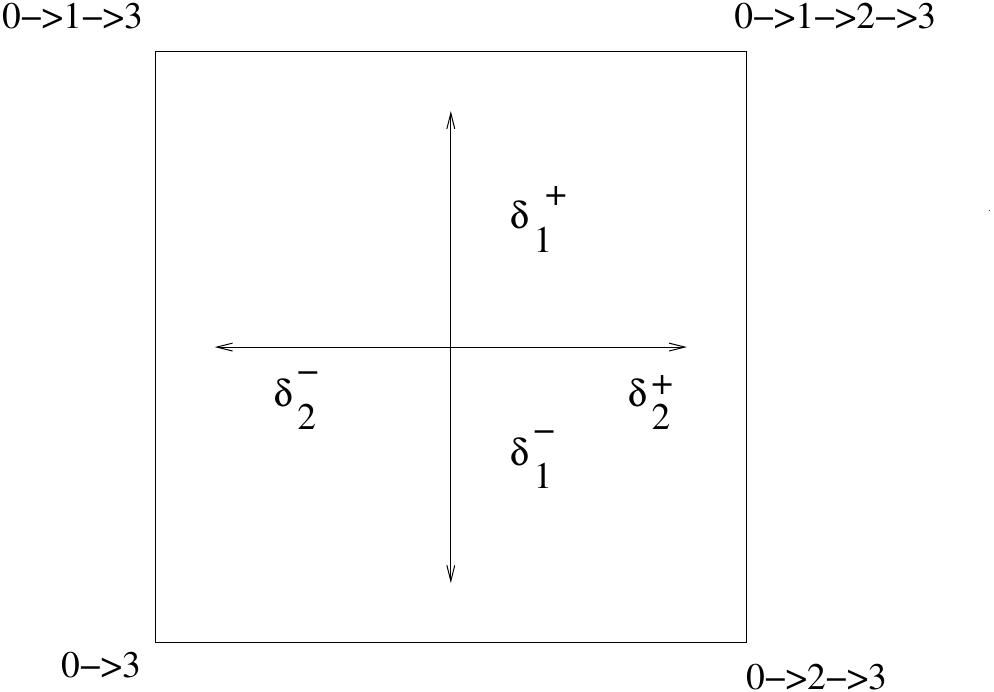}
    \caption{The cubical structure in the case of $n=3$}
    \label{squarefig}
\end{figure}

To get into this analysis, we can choose two other alternative presentations.
The first is given by a self--explanatory bar notation and the second is a parameterized notation. For the latter, we use $0\stackrel{a}{\rightarrow}1\stackrel{b}{\rightarrow}2\stackrel{c}{\rightarrow}3$. Then $s,t$ are formal parameters.  At $t=1$ an edge disappears, while for $t=0$ the morphisms are composed. The latter also explains the shuffles very nicely. Indeed in the usual diagonal approximation there is a shuffle of inner degeneracies. The degeneracies are the composition and the square modulo the symmetric group action yields the simplex. Lifting this yields the terms in the shuffle product.

\begin{figure}[htbp]
    \centering
\includegraphics[width=.4\textwidth]{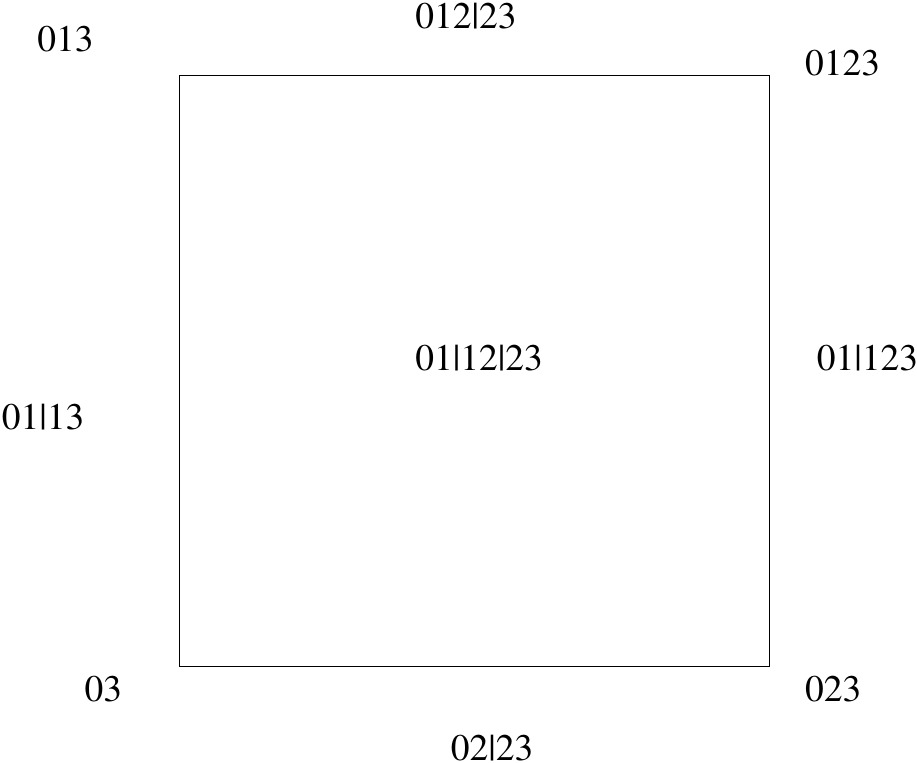}
    \includegraphics[width=.4\textwidth]{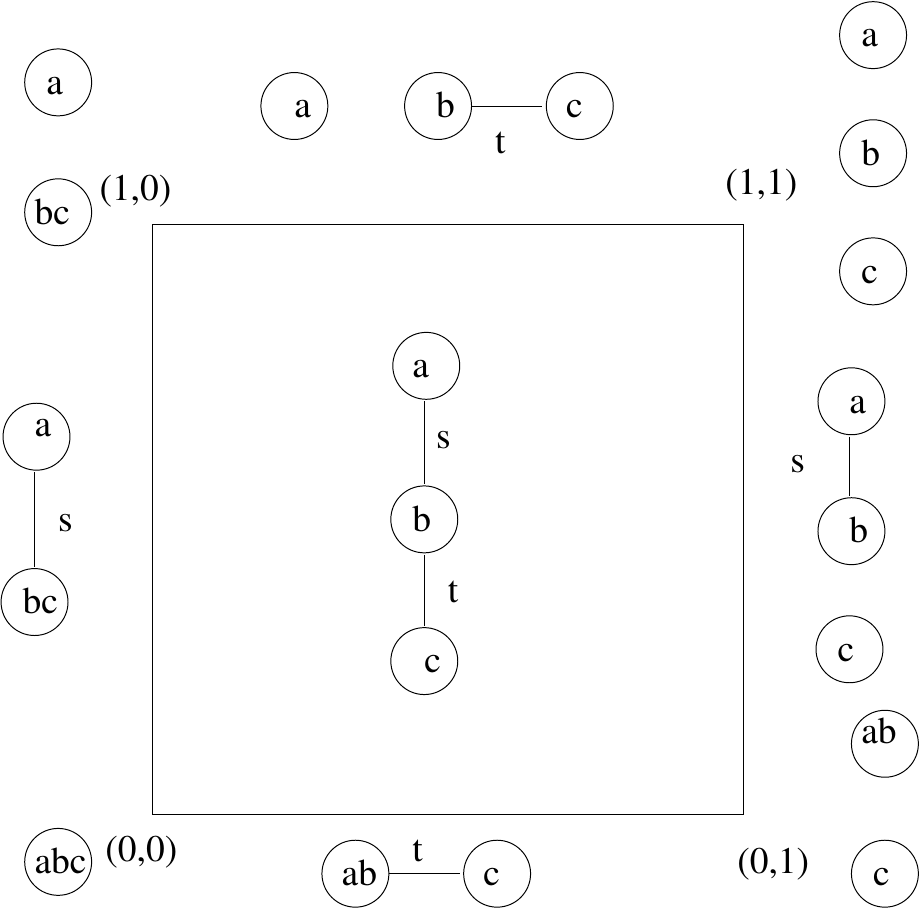}
    \caption{Two other renderings of the same square}
    \label{squarefigs}
\end{figure}

 The cubical structure is also related to
Cutkosky rules \cite{Bloch,BlochKreimer,Kreimercut}  Outer Space \cite{ConantVogtmann}.
This natural appearance of cubical chains can be understood using  decorated Feynman categories \cite{decorated} and the W--construction \cite{feynman}, as explained in \cite{BergerKaufmann}.

\appendix

\section{Co--algebra and Hopf algebras}
\label{Quilapp}
A good source for this material is \cite{Cartier}.

\begin{df}
A  co--algebra with a split co--unit is a triple  $(\H,\eps,\eta)$, where $(\H,\eps)$ is a co--algebra and $\eta:\unit\to \H$ is a section of $\eta$, such that if
$|:=\eta(1)$, $\Delta(|)=|\otimes |$.
\end{df}
Using $\eta$,  split $\H=\unit\oplus \bar{\H}$ where $\bar{\H}:=ker(\eps)$.
Following Quillen \cite{Quillen}, set $\bar{\Delta}(a):=\Delta(a)-|\otimes a-a\otimes |$ where $|:=\eta(1)$.

If $(\H,\mu,\Delta,\eta,\eps)$ is a bi--algebra then  the restriction
$(\H,\Delta,\eps)$ is a co--algebra with split co--unit.

A co--algebra with split co--unit $\H$ is said to be co--nilpotent if for all $a\in \bar\H$ there is an $n$ such that\ $\bar\Delta^n(a)=0$ or equivalently if for some $m: a\in ker (\proj^{\otimes m+1}\circ \Delta^m)$.

Quillen defined the following filtered object.
\begin{equation}
F^0=\unit; F^m =\{a:\bar\Delta a\in F^{m-1}\otimes F^{m-1}\}
\end{equation}

$\H$ is said to be connected, if $\H=\bigcup_m F^m$.
If $\H$ is connected, then it is nilpotent, and conversely if taking kernels and the tensor product commute then
co--nilpotent implies connected where $F^m=ker (\proj^{\otimes m+1}\circ \Delta^m)$.

For a co--nilpotent bi--algebra algebra there is a unique formula for a possible antipode given by:
\begin{equation}
    S(x)=\sum_{n\geq 0} (-1)^{n+1}\mu^n\circ \bar\Delta^n(x)
\end{equation}
where $\bar \Delta^n:\H\to \H^{\otimes n}$ is the  $n-1$-st iterate of $\bar \Delta$ that is unique due to co--associativity and $\mu^n:\H^n\to \H$ is the  $n-1$-st iterate of the multiplication $\mu$ that is unique due to associativity.

\section{Joyal duality}

\label{Joyalapp}
There is a well known
duality \cite{joyal} of two subcategories of $\simpcat_+$. This history of this duality can be traced back to \cite{street}. Here we review this operation and in \cite{HopfPart2} we show how it can be graphically interpreted. The graphical notation we present in \S\ref{P2-constexpar} of {\em loc.\ cit.} in turn connects to the graphical notation in \cite{Gont} and \cite{gangl}.

The first of the two subcategories of $\simpcat_+$ is $\simpcat$ and the second is the category of intervals $\Delta_{*,*}$. Since we will be dealing with both $\simpcat$ and $\simpcat_+$, we will use the notation $\underline{n}$ for the small category $1\to  \dots \to n$ in $\simpcat$ and $[n]$ for $0\to 1 \to \dots \to n$ in $\simpcat_+$.
The subcategory of intervals $\Delta_{*,*}$ is then the wide subcategory of $\simpcat_+$ whose morphisms  preserve both the beginning and the end point.
We will denote these maps by  $Hom_{*,*}([m],[n])$. Explicitly $\phi\in
Hom_{*,*}([m],[n])$ is $\phi(0)=0$ and $\phi(m)=n$.

The contravariant duality is then given by the natural bijjections
\begin{equation}
Hom_{*,*}([m],[n])\simeq Hom(\underline{n},\underline{m})\quad
\end{equation}
defined  by the contravariant identification   $\phi\stackrel{1-1}{\leftrightarrow}\psi$
\begin{equation}
\label{joyaleq}
\psi(i)=\min\{j:\phi(j)\geq i\}-1, \quad
\phi(j)=\max\{i:\phi(i)<j\}+1.
\end{equation}

\bibliography{hopfbib}
\bibliographystyle{halpha}

\end{document}